\theoremstyle{plain}
\newtheorem{theorem}{\scshape Theorem}
\newtheorem{lemma}[theorem]{\scshape Lemma}
\theoremstyle{definition}
\newtheorem{remark}{\scshape Remark}
\theoremstyle{definition}
\DeclareMathAlphabet{\mathpzc}{OT1}{pzc}{m}{it}
\DeclareMathAlphabet\mathbfcal{OMS}{cmsy}{b}{n}
\DeclareMathOperator*{\esssup}{ess\,sup}
\newcommand{\la}{\langle}
\newcommand{\pa}{\partial}
\newcommand{\veps}{\varepsilon}
\newcommand{\eps}{\epsilon}
\newcommand{\opn}{\operatorname}
\newcommand{\im}{\opn{Im}}
\newcommand{\mbb}[1]{\mathbb{#1}}
\newcommand{\mc}[1]{\mathcal{#1}}
\newcommand{\jd}{\displaystyle}
\newcommand{\der}[2]{\frac{\partial #1}{\partial #2}}
\newcommand{\htil}{\widetilde{h}}
\newcommand{\phitil}{\widetilde{\varphi}}
\newcommand{\mat}[1]{\begin{pmatrix} #1 \end{pmatrix}}
\newcommand{\diag}{\opn{diag}}
\def\bbN{{\mathbb N}}
\def\bbR{{\mathbb R}}
\def\bbZ{{\mathbb Z}}
\def\D{{\mathcal D}}
\def\O{{\mathcal O}}
\def\Q{{\mathcal Q}}
\def\V{{\mathcal V}}
\def\bN{{\mathbf N}}
\def\bfn{{\mathbf n}}
\def\rf{{\rm f}}
\def\rg{{\rm g}}
\def\e{\text{\bf\emph{e}}}
\def\bfn{\text{\bf\emph{n}}}
\def\p{\text{\bf\emph{p}}}
\def\u{\text{\bf\emph{u}}}
\def\v{\text{\bf\emph{v}}}
\definecolor{grey}{rgb}{0.5,0.5,0.5}
\definecolor{lightgrey}{rgb}{0.9,0.9,0.9}
\definecolor{darkgreen}{rgb}{0,0.6,0}
\definecolor{orange}{rgb}{1,0.5,0}
\definecolor{lightpink}{rgb}{1,0.714,0.757}
\definecolor{lightorange}{rgb}{1,0.855,0.725}
\DeclareMathOperator{\sgn}{sgn}
\def\div{{\operatorname{div}}}
\def\curl{{\operatorname{curl}}}
\def\ft #1{{\widehat{#1}}}
\def\bp{{\partial_1}}
\def\p{{\partial\hspace{1pt}}}
\def\Forall{\forall\hspace{2pt}}
\def\comm#1#2{{\llbracket#1,#2\rrbracket}}
\def\({{(\hspace{-2pt}(}}
\def\){{)\hspace{-2pt})}}
\def\smallexp#1{{\text{\small #1}}}
\def\footnoteexp#1{{\text{\footnotesize #1}}}
\def\smallint#1#2{{\text{\small$\displaystyle{}\int_{#1}^{#2}$}}}
\def\dfrac#1#2{\smallexp{$\displaystyle{}\frac{#1}{#2}$}}
\def\ddfrac#1#2{\footnoteexp{$\displaystyle{}\frac{#1}{#2}$}}
\def\XXint#1#2#3{{\setbox0=\hbox{$#1{#2#3}{\int}$}
\vcenter{\hbox{$#2#3$}}\kern-.5\wd0}}
\def\nn{{\scriptstyle \mathbfcal{N} }}
\def\tt{{\scriptstyle \mathbfcal{T} }}
\def\hinit{h_{\operatorname{init}}}
\def\fhinit{\widehat{{ h}}_{\operatorname{init}}}
\def\htinit{{\dot h}_{\operatorname{init}}}
\def\fhtinit{\widehat{{\dot h}}_{\operatorname{init}}}
\title{{\bf Rigorous Asymptotic Models of Water Waves }}
\author{
{\small {\bf Arthur Cheng}}
\vspace{-.05 in}
\\{\footnotesize Department of Mathematics}
\vspace{-.05 in}
\\{\footnotesize National Central University}
\vspace{-.05 in}
\\{\footnotesize Jhongli, Taoyuan 32001 Taiwan}
\vspace{-.05 in}
\\{\footnotesize email: {\it cchsiao@math.ncu.edu.tw}}
\and
{\small {\bf Rafael Granero-Belinch\'on}}
\vspace{-.05 in}
\\{\footnotesize Departamento de Matem\'aticas, Estad\'istica y Computaci\'on}
\vspace{-.05 in}
\\{\footnotesize Universidad de Cantabria}
\vspace{-.05 in}
\\{\footnotesize Santander, Espa\~na}
\vspace{-.05 in}
\\{\footnotesize email: {\it rafael.granero@unican.es}}
\and
 {\small {\bf Steve Shkoller}}
 \vspace{-.05 in}
\\{\footnotesize Department of Mathematics}
\vspace{-.05 in}
\\{\footnotesize University of California}
\vspace{-.05 in}
\\{\footnotesize Davis, CA 95616 USA}
\vspace{-.05 in}
\\{\footnotesize email: {\it shkoller@math.ucdavis.edu}}
\and
{\small {\bf Jon Wilkening}}
\vspace{-.05 in}
\\{\footnotesize Department of Mathematics}
\vspace{-.05 in}
\\{\footnotesize University of California}
\vspace{-.05 in}
\\{\footnotesize Berkeley, CA 94720 USA}
\vspace{-.05 in}
\\{\footnotesize email: {\it wilken@math.berkeley.edu}}
}
\date{\today}
\begin{document}

\maketitle

{\footnotesize
  {\bf Abstract.}
We develop a rigorous asymptotic derivation of two mathematical models
of water waves that capture the full nonlinearity of the Euler
equations up to quadratic and cubic interactions, respectively.
Specifically, letting $ \epsilon $ denote an asymptotic parameter
denoting the steepness of the water wave, we use a Stokes expansion in
$ \epsilon $ to derive a set of linear recursion relations for the
tangential component of velocity, the stream function, and the water
wave parameterization.  The solution of the water waves system is
obtained as an infinite sum of solutions to linear problems at each
$O( \epsilon ^k)$ level, and truncation of this series leads to our
two asymptotic models, which we call the quadratic and cubic
$h$-models.

Using the growth rate of the Catalan numbers (from number theory), we
prove well-posedness of the $h$-models in spaces of analytic
functions, and prove error bounds for solutions of the $h$-models
compared against solutions of the water waves system.  We also show
that the Craig-Sulem models of water waves can be obtained from our
asymptotic procedure and that their WW2 model is well-posed in our
functional framework.

We then develop a novel numerical algorithm to solve the quadratic and
cubic $h$-models as well as the full water waves system. For three
very different examples, we show that the agreement between the model
equations and the water waves solution is excellent, even when the
wave steepness is quite large.  We also present a numerical example of
corner formation for water waves.

}

{\small
\tableofcontents}

\section{Introduction}

Both gravity and capillary 
water waves are modeled by the free-surface incompressible Euler equations of fluid dynamics, and for many applications,  the fluid is
additionally assumed to be  irrotational.  Well-posedness, stability, and  singularity formation have been well studied with many results;
 see, for example,  \cite{nalimov1974cauchy,yosihara1982gravity, Craig:existence-theory-water-waves, Beale-Hou-Lowengrub:convergence-boundary-integral, 
 Ambrose-Masmoudi:zero-surface-tension-2d-waterwaves,Wu:well-posedness-water-waves-2d, Wu:almost-global-wellposedness-2d,
  Wu:global-wellposedness-3d, AlvarezSamaniego-Lannes:large-time-existence-water-waves, Lannes1, Lindblad:well-posedness-motion, 
  Coutand-Shkoller:well-posedness-free-surface-incompressible, shatah2011local, ChCoSh2008, ChCoSh2010, alazard2014cauchy, hunter2016two, 
  Germain-Masmoudi-Shatah:global-solutions-gravity-water-waves-annals, ionescu2015global, ifrim2014two, alazard2015global, 
  castro2012splashannals, coutand2014finite, FeIoLi2016,CoSh2016, deng2016global}.
  However, the Euler equations are sufficiently complicated that for many physical scenarios, a precise 
  understanding of the dynamics of the solutions to the full water waves problem is not (at this time) known.    Consequently, 
since the pioneering works of Airy, Boussinesq and Stokes \cite{airy1841tides, Bo1872, Bo1877,stokes1847theory},  there has been a sustained effort to find
suitable  approximations of the Euler equations,  specific  to certain asymptotic regimes.  Such approximate asymptotic models have closely
related dynamics and can be significantly easier to analyze.  Herein, we develop an asymptotic procedure that yields approximate model
equations for the water waves problem to various orders of approximation of the nonlinearity.   In particular, we present two models that 
respectively capture the nonlinearity up to 
 quadratic and cubic interactions.

We derive two asymptotic models for the evolution of both gravity and gravity-capillary waves in deep water, using an asymptotic expansion in the steepness of the wave $ \epsilon $,  which we view as a small parameter, equivalent  to the ratio of the amplitude to the wavelength.  Such an expansion  has been used extensively since it was introduced by Stokes \cite{stokes1847theory}; see, for example, \cite{AkNi2010, NiRe2005, NiRe2006, AkNi2012, AkNi2014}).
Starting with the case of gravity water waves,  we employ such a  Stokes expansion and obtain  linear recursion relations for the stream function,
the tangential  component of velocity, and the free-surface parameterization. Truncating  this expansion to $O( \epsilon ^3)$ yields a quadratic model equation for  gravity water waves. We refer to this PDE as the \emph{quadratic $h$-model}
\begin{equation}
\p_{\!t}^2 h + g \Lambda h = - \Lambda (|H \p_{\!t}h|^2) + g \Lambda (h \Lambda h) + g \p_{\!1} (h \p_{\!1} h)\,.
\end{equation} 
Keeping
all terms in the recursion relation to  $O( \epsilon ^4)$ yields  the \emph{cubic $h$-model}, a new model of  water wave dynamics that accurately captures the cubic interactions of the Euler equations and is given by
\begin{equation}
\p_{\!t}^2 h + g \Lambda h = -\Lambda \big[(H\p_{\!t} h)^2\big] + g \p_{\!1} \big(h \p_{\!1} h \big) +g\Lambda \big(h \Lambda h \big)+ \mathcal{Q}(h)
\end{equation}
where the cubic nonlinearity $\mathcal{Q}(h)$ is defined  in \eqref{R(h)}.   Asymptotic models for  gravity-capillary waves are derived in the same fashion (in section \ref{sectionST}) when
 gravity and surface tension forces are of the same order.

The same expansion procedure that we used for  the one-fluid problem can be used to derive 
 models for two-fluid internal waves and
 the Rayleigh-Taylor instability (as noted in Remark \ref{remark}). Furthermore, out approach can be applied to the case of finite depth fluids as well.

We note that the quadratic $h$-model was  first derived by Akers \& Milewski \cite{AkMi2010} using a formal asymptotic procedure in which they
assumed that due to very small amplitudes of the water wave, it could be assumed that all elliptic problems are set on the time-independent half-space.
  A Stokes expansion procedure was then used by Akers \& Nicholls  \cite{AkNi2010} with a time-dependent fluid domain,
but for the case of the traveling wave ansatz.

We prove that both the quadratic and cubic $h$-models  are well-posed in  spaces of analytic functions that are similar to the
Wiener algebra but with a (growing) exponential weight (used to guarantee analyticity).   As we noted above, our methodology relies upon a sequence of
linear recursion relations obtained via the Stokes expansion, and thus it is possible (in principle) to solve a nonlinear PDE by a summation of an infinite series, each term in the series coming from a 
solution to a linear problem.   The objective, then, is to prove summability of the infinite series; however, classical contraction mapping techniques fail due to the
growth of the norm of the  $k$th linear solution.   Remarkably, the growth of these norms can be quantified and estimated in terms of the 
Catalan numbers \cite{St2015} from number theory (which are often used in combinatorics), and the well-known bounds on the Catalan numbers then permit the convergence of the infinite
series to the solution of the nonlinear problem.\footnote{It may be possible to establish existence of solutions to the $h$-models using some type of 
Cauchy-Kovalevsky theorem, but we are not aware of a particular form of the theorem that would be directly applicable.}
We also establish  rigorous error bounds for the difference between solutions of the $h$-models and the full water waves system.
We thus conclude that both the quadratic and cubic  $h$-models are accurate asymptotic models of water waves in  the small $ \epsilon $-regime.

The asymptotic procedure that we shall describe below allows us to derive a large class of asymptotic models of water waves, including the well-known
hierarchy of models obtained by Craig \& Sulem \cite{CrSu1993}; in particular, we show that their most studied model, WW2 (or water waves 2),  is obtained from
our approach, and we prove that it too is well-posed  in  spaces of analytic functions.   Moreover, we write the WW2 model as a second-order wave equation and
explain its connection with the quadratic $h$-model.

Finally, we present an arbitrary-order exponential time differencing
scheme
\cite{cox-matthews:etd-2002,kassam-trefethen:etd-2006,chen-wilkening:setd}
for solving the quadratic and cubic $h$-models accurately and
efficiently and compare those solutions against numerical solutions of
the Euler equations.  We show that the $h$-models converge as
expected: with $\eps$ denoting the maximum slope of the initial
condition, the quadratic and cubic $h$-models converge in $L^2$ to
solutions of the full water waves problem with rates $O(\epsilon^2)$
and $O(\epsilon^3)$, respectively, where the $L^2$ error is scaled by
$\eps^{-1}$ to account for the decreasing (as a function of $ \epsilon $) 
norm of the exact solution.
We give three examples of initial data that show excellent agreement
between the $h$-models and the full water waves solution all the way
up to $\eps=O(1)$. The first example is a multi-hump initial condition
in which a jet forms in each trough as the solution drops from rest;
the second example is a localized disturbance over a flat surface that
propagates outward as time evolves; and the third example is a family
of standing water waves. In all three cases, the quadratic and cubic
models are much better than linear theory at capturing features of the
dynamics.  For large $\epsilon$, the quadratic model has a tendency to
form a corner singularity while the cubic model tracks the Euler
solution quite well. We also present a continuation of the first
example for the Euler equations to show that the wave eventually
overturns and appears to form a corner singularity before
self-intersecting, with $dP/dn\rightarrow0$ at the tip of one of the
overturning waves.

{\bf Paper Outline.} 
In Section \ref{sec:notation}, we introduce the notation and some important definitions used throughout the paper.  In Section \ref{sec:ww-equations}, we introduce
the water waves equations, and the three fundamental variables that shall be evolved: the tangential component of velocity, the stream function, and the free-surface
parameterization.   Section \ref{sec:stokes_expansion} is devoted the Stokes expansion of the water waves system and the derivation of the linear recursion 
relations.  In Section \ref{sec:hmodels}, we derive the quadratic and cubic $h$-models, and in Section \ref{Catalansection}, we prove that these models are well-posed.    
In Section \ref{sec:CSWW2}, we derive the Craig-Sulem WW2 model, and prove that it too is well-posed. 
Section \ref{sec:error} establishes the error estimates for solutions of the $h$-models compared to the full water waves system. Then, in Section \ref{sec:numerics}, we perform a number
of numerical experiments that compare the quadratic and cubic $h$-models with a highly accurate numerical solution of the full water waves system.

\section{Some notation and definitions}\label{sec:notation}

\subsection{Matrix indexing} Let $A$ be a matrix, and $b$ be a column vector. Then, we write $A^i_j$ for the component of $A$, located on row $i$ and column $j$;  consequently, using the Einstein summation convention, we write
$$
(Ab)^k=A^k_ib^i\text{ and }(A^Tb)^k=A^i_k b^i.
$$

\subsection{Power series summation}
We adopt the convention that independent of the summand $s_j$,
\begin{equation}\label{summation}
\sum_{j=0}^{k-\ell} s_j=0 \ \ \text{ whenever } k<\ell \,.
\end{equation}

\subsection{The water wave   parameterization}
We identify $ \mathbb{S}  ^1$ with the interval $[-\pi, \pi]$.   We shall denote a general parameterization of the free-surface
of the fluid by the diffeomorphism
$z( \cdot , t) : \mathbb{S}^1 \to \mathbb{R}^2  $.  This free-surface of the fluid is the water wave, which we denote by $\Gamma(t)$.  Hence, the water wave is given by
$$\Gamma(t)= \big\{\big(z_1(x_1,t), z_2(x_1,t)\big) \ : \ -\pi \le x_1 \le \pi\,,  \ t\in[0,T] \big\}  \,.$$

For the majority of our analysis, we shall assume that the water wave evolves as a graph over the horizontal $x_1$-axis.   in particular,  $(z_1,z_2)=(x_1, h(x_1,t))$ and
\begin{equation}\label{Gamma}
\Gamma(t)= \big\{\big(x_1, h(x_1,t)\big) \ : \ -\pi \le x_1 \le \pi\,,  \ t\in[0,T] \big\}  \,.
\end{equation}
The one-to-one function $h(x_1,t)$ is often called the signed {\it height} function.

\subsection{The fluid domain and some geometric quantities}
The time-dependent fluid domain is defined as
\begin{equation}\label{Omega}
\Omega(t) = \big\{ (x_1, x_2) \ : \ -\pi \le x_1 \le \pi\,, -\infty \le x_2 \le z_2(x_1,t)\,, \ t\in[0,T] \big\},
\end{equation}
\emph{i.e.} for the sake of simplicity, we assume that the depth of the fluid is much larger than the amplitude of the wave.

We define the reference domain $\D$ as
\begin{equation}\label{D}
\D = \mathbb{S}^1 \times (-\infty, 0) \,.
\end{equation}
We let $\bN =\e_2$ denote the outward unit normal to $\p \D $, and we let $\tt(\cdot ,t)$ and $\nn( \cdot , t)$ denote, respectively, the unit tangent and normal vectors to
the water wave $\Gamma(t)$, where $\nn( \cdot ,t)$ points outward to the set $\Omega(t)$.
We then set
$$
\bfn = \nn \circ z\text{ and } \boldsymbol{\tau}=\tt \circ z \,.
$$
When the water wave is defined by  graph parameterization \eqref{Gamma}, the induced  metric $\rg$ is given by
\begin{equation}\label{metric}
\rg =  1+ (\p_{\!1} h)^2\,.
\end{equation}

\subsection{Derivatives} We write
$$
\partial_k f=\frac{\partial f}{\partial x_k}\ \text{ for } k=1,2\,, \ \  \partial_t f=\frac{\partial f}{\partial t} \,, \ \
\nabla=(\p_{\!1}, \p_2)\,,    \ \ \nabla^\perp=(- \p_2, \p_{\!1}) \,,
$$
and for a vector $F$,
$$
\operatorname{div} F=  \nabla \cdot F  \ \text{ and } \ \operatorname{curl} F = \nabla ^\perp\cdot F \,.
$$
The Laplace operator is defined as $ \Delta = \p_{\!1}^2 + \p_2^2$.

\subsection{Fourier series}\label{sec::Fourier}
If  $f:\mathbb{S}^1\to\bbR$ is a square-integrable $2\pi$-periodic function,  then it has the Fourier series representation
$f(x_1) = \sum\limits_{k=- \infty }^ \infty  \ft{f}(k) e^{ikx_1}$ for all $x_1 \in \mathbb{S}^1$, where the complex Fourier coefficients
are defined by $\hat f(k) = {\dfrac{1}{2\pi}} \smallint{\mathbb{S}^1}{}\, f(x_1)
e^{-ikx_1}dx_1$.  We shall sometimes write $\hat f_k$ for $\hat f(k)$.
Functions  $g:\D \to \bbR$ (which are square-integrable in $x_1$ can be expanded as
$
g(x_1,x_2) = \sum\limits_{k\in \bbZ} \ft{g}(x_2,k) e^{ikx_1}$ for all $(x_1,x_2) \in \D$, where
$\ft{g}(x_2,k) = {\dfrac{1}{2\pi}} \smallint{\mathbb{S}^1}{}\, g(x_1, x_2)
e^{-ikx_1}dx_1$.

\subsection{Singular integral operators}   Let $f(x_1)$ denote a $2\pi$ periodic function on $\mathbb{S}^1$.
Using the Fourier representation, we define the
Hilbert transform $H$ and the Dirichlet-to-Neumann operator $ \Lambda $ as
\begin{align}\label{Hilbert}
\ft{Hf}(k)=-i\text{sgn}(k) \ft{f}(k) \,, \ \
\ft{\Lambda f}(k)&=|k|\ft{f}(k)\,.
\end{align}
In particular, we note that
$$
\partial_1 H=\Lambda,\quad H^2=-1.
$$
Equivalently, suppose that $f:\mathbb{S}^1 \to \mathbb{R}  $ is a $2\pi$ periodic function and that $\Phi$ is its harmonic extension to $\D$.
Then,
\begin{equation}\label{Lambda_def}
\Lambda f= \p_2 \Phi \text{ \ on \ } \mathbb{S}^1\times\{0\} \,.
\end{equation}
Finally, we denote the commutator between $f$ and the Hilbert transform acting on $g$ as
$$
\comm{H}{f}g=H(fg)-fHg.
$$
Let us observe that $\comm{f}{H}g=-\comm{H}{f}g.$
\subsubsection{Function spaces}  For $1 < p \le \infty $, we denote by $L^p(\mathbb{S}^1)$ the set of Lebesgue measurable $2\pi$-periodic functions such that
$\|u\|_{L^p} < \infty $ where $\|u\|_{L^p} = \left(\smallint{\mathbb{S}^1}{}\, |u(x)|^p\,dx \right)^ {\frac{1}{p}} $ if $1< p< \infty $ and $\|u\|_{L^ \infty } = \esssup_{x\in \mathbb{S}^1} |u(x)|$.
For integers $k \ge 0  $,  we let $H^k(\mathbb{S}^1)=
\Big\{ u: L^2(\mathbb{S}^1)  \ \Big| \  \|u\|^2_{H^k}:= \sum\limits_{j=0}^k \|\p_{\!1}^j u\|_{L^2} < \infty \Big\}$.    For $s \in \mathbb{R}  $,  we then define the space $H^s(\mathbb{S}^1)$ to be
the  $2\pi$-periodic distributions such that
$  \|u\|^2_{H^s}:= \sum\limits_{m=- \infty}^ \infty (1+ m^2)^s |\hat u_m|^2 < \infty $.

For a given $\tau>0$, we define the following Banach scale of analytic functions as
\begin{equation}\label{X_tau}
X_\tau=\Big\{u: \mathbb{S}^1 \to \mathbb{R}  \,\Big|\,\|f\|_{X_\tau} = \smallexp{$\sum\limits_{m\in\bbZ}$} e^{\tau|m|}|\hat{u}_m|<\infty\Big\}.
\end{equation}

\section{Water waves equations}\label{sec:ww-equations}
Water waves are modeled by the incompressible and irrotational free-surface Euler equations, written as
\begin{subequations}\label{ww}
\begin{alignat}{2}
\partial_t\u + (\u \cdot \nabla) \u + \nabla p &= {\bf 0} &&\text{in}\quad \Omega(t)\,,\\
\operatorname{curl} \u=\div \u &= 0 &&\text{in}\quad \Omega(t)\,,\\
p&=  -\lambda \frac{\partial_1^2h}{(1+(\bp h)^2)^{3/2}}  \qquad&&\text{on}\quad \Gamma(t)\,,\\
\u &= \u_0 &&\text{on}\quad\Omega \times \{t=0\}\,,\\
\V(\Gamma(t)) &= \u \cdot \nn  \,,
\end{alignat}
\end{subequations}
where $t  \in [0,T]$, $\Omega(t)$ is defined in \eqref{Omega},  $\Gamma(t)$ is defined in \eqref{Gamma}, $0\leq\lambda$ is the surface tension parameter and $\V(\Gamma(t))=\u \cdot \nn$ means  that the free-surface $\Gamma(t)$ moves with normal velocity $\u \cdot \nn$.
 We shall assume that all functions are $2\pi$-periodic in $x_1$.
\begin{center}
\begin{tikzpicture}[scale=0.5]
    \draw (21,1.) node { $\Gamma(t)$};
    \draw (15.2,-1.5) node { $ \Omega(t)$ };
    \draw[color=blue,ultra thick] plot[smooth,tension=.6] coordinates{( 10,0) (11,-2) (13,-1) (15,1) (17, 0) (19, 2) (21,-1) };
\end{tikzpicture}
\end{center}

\subsection{The Bernoulli equation}
Since $\curl \u = 0$ in $\Omega(t)$, $\u = \nabla \phi$ for some scalar potential $\phi$. Then,
 (\ref{ww}a) can be written as
\begin{equation}\label{Bernoulli_1phase}
 \partial_t\phi(x,t) + \frac{1}{2} |\nabla \phi(x,t)|^2 + p(x,t) + \rho gx_2 =  f(t) \quad\Forall x \in \Omega(t) \text{ and } t>0\,.
\end{equation}
where $f$ is a function independent of $x$.

\subsection{The evolution of the tangential velocity}
On  $\mathbb{S}^1$,  we define the following
quantities:
$$
\v = \u \circ z\,,\quad \Psi = \phi \circ z\,,\quad\text{and}\quad \bfn = \nn \circ z\,.
$$
We shall make use of the {\it tangential velocity}
$$
\omega = \u \cdot \tt  \ \text{ and } \
\widebar{\omega}(x_1, t) = \v(x_1, t) \cdot \partial_1 z( x_1, t) \  \text{ on } \ \Gamma(t) \,.
$$
From (\ref{ww}e),  $\partial_t z \cdot \bfn = \v \cdot \bfn$ so that
\begin{equation}\label{ss2}
\partial_t z(x_1,t) = \v(x_1,t) + c(x_1,t) \p_{\!1} z(x_1,t)
\end{equation}
for an arbitrary scalar function $c$.   (Note that $\p_{\!1} z$ is a tangent vector and that the water waves problem has a tangential reparameterization symmetry.)

By the chain rule,
\begin{align}
\p_{\!1} \Psi(x_1,t) &= \p_{\!1} z(x_1,t) \cdot (\nabla \phi)\circ z(x_1,t)
 = (\v \cdot \p_{\!1} z)(x_1,t)
 = \widebar{\omega}(x_1,t)\,,\label{ss1}
\end{align}
and
\begin{align*}
\partial_t \Psi(x_1,t) &= \partial_t \phi(z(x_1,t),t) + \partial_t z(x_1,t) \cdot (\nabla \phi) \circ z(x_1,t)\\
&= (\partial_t \phi \circ z + \partial_t z \cdot \v)(x_1,t)\,.
\end{align*}
From \eqref{ss1}, $\p_{\!1} \partial_t \Psi(x_1,t) = \partial_t \widebar{\omega}(x_1,t)$, and (\ref{ss2}) shows that
$$
\partial_t \Psi(x_1,t) = (\partial_t \phi \circ z)(x_1,t) + |\v(x_1,t)|^2 + c(x_1,t) \,\widebar{\omega}(x_1,t)\,.
$$
Therefore, we find that $\widebar{\omega}$ satisfies
\begin{equation}\label{omega_evolution_eq_1phase_temp}
\partial_t \widebar{\omega} = \p_{\!1} \big(\partial_t \phi \circ z + |\v|^2 + c \,\widebar{\omega} \big)\,.
\end{equation}
From (\ref{Bernoulli_1phase}),
$$
(\partial_t \phi \circ z)(x_1,t) + \frac{1}{2} |\v(x_1,t)|^2 + (p\circ z)(x_1,t) + gz_2(x_1,t) = f(t) \,,
$$
so that
\begin{align*}
\p_{\!1} (\partial_t \phi \circ z)& = - \p_{\!1} \big(\dfrac{1}{2} |\v|^2 + (p\circ z)+ gz_2\big)
 \,,
\end{align*}
where we have used the boundary condition (\ref{ww}c) in the last equality.
Using  (\ref{omega_evolution_eq_1phase_temp}), we find that
\begin{equation}\label{omega_evolution_eq_1phase}
\partial_t \widebar{\omega} = \p_{\!1} \big(\dfrac{1}{2} |\v|^2 + c \,\widebar{\omega}  + \lambda \frac{\partial_1^2h}{(1+(\bp h)^2)^{3/2}}  - gz_2\big)  \,.
\end{equation}

We now suppose that the interface $\Gamma(t)$  remains a graph and is given by \eqref{Gamma}.
With the definition of the metric \eqref{metric}, we write the unit normal and tangent vectors, respectively, to $\Gamma(t)$ as
$$\bfn = \rg^{-\frac{1}{2}} (-\p_{\!1} h,1) \,, \text{ and } \boldsymbol{\tau} = \rg^{-\frac{1}{2}} (1, \p_{\!1} h) \,.$$
Using (\ref{ww}e), we decompose $\v$ as follows:
\begin{align}
\v&= (\v \cdot \bfn) \bfn + (\v\cdot {\boldsymbol\tau}) {\boldsymbol\tau}
 = \rg^{-\frac{1}{2}}  \partial_t h \, \bfn \ + \rg^{-\frac{1}{2}} \widebar{\omega} \, {\boldsymbol\tau} \,, \label{v_decomposition}
\end{align}
and hence
\begin{align}
|\v|^2 &= \rg ^{-1} (|\partial_t h|^2  + |\bar \omega|^2)  \,. \label{v-squared}
\end{align}
Equations (\ref{ss2}) and (\ref{v_decomposition}) then provide us with the identity
\begin{align*}
c &= \p_t z_1 - \v_1 = - \v_1
= \rg^{-1}\left( \partial_t h \p_{\!1} h -  \widebar{\omega}\right)\,,
\end{align*}
so that (\ref{omega_evolution_eq_1phase}) can be written as
\begin{align}
\partial_t \widebar{\omega} &= - g \p_{\!1} h + \p_{\!1} \big[\dfrac{1}{2} \rg ^{-1} (|\partial_t h|^2  + |\bar \omega|^2) - \rg^{-1} |\widebar{\omega}|^2 + \rg^{-1} \widebar{\omega} \partial_t h \p_{\!1} h \big]+\lambda \partial_1\left(\frac{\partial_1^2h}{(1+(\bp h)^2)^{3/2}}\right) \nonumber\\
&= - g \p_{\!1} h + \frac{1}{2} \p_{\!1} \big[\rg ^{-1} \big(|\partial_t h|^2  - |\bar \omega|^2 + 2 \widebar{\omega} \partial_t h \p_{\!1} h\big) \big]+\lambda \partial_1\left(\frac{\partial_1^2h}{(1+(\bp h)^2)^{3/2}}\right)\label{omega_evolution_eq_1phase_h}\,.
\end{align}

\subsection{The equation for the stream function}
Since $\Omega(t)$ is simply connected, by classical Hodge theory, we can uniquely determine the velocity vector $\u$ by solving the following elliptic system:
\begin{equation} \label{onephaseelliptic}
\operatorname{curl}  \u = 0  \text{ and }
\operatorname{div} \u  = 0 \ \text{ in }\Omega(t)\,, \ \ \text{ and } \ \
\u \cdot \tt = \omega \ \text{ on }\ \Gamma(t)\,.
\end{equation}

Solutions of  \eqref{onephaseelliptic} have the form $\u = \nabla^\perp \vartheta$ for some stream function $\vartheta$ which satisfies the scalar Neumann problem
\begin{equation}\label{theta_eq_1phase}
\Delta \vartheta = 0 \ \text{ in } \ \Omega(t)\,, \ \ \text{ and } \ \
\dfrac{\p \vartheta}{\p \nn} = -\omega \ \text{ on }\  \Gamma(t)\,.
\end{equation}
Existence, uniqueness, and regularity of solutions to \eqref{theta_eq_1phase} is classical when $\Gamma(t)$ is sufficiently smooth and $\smallint{\Gamma(t)}{}\, \omega dS(t)=0$;
see \cite{ChSh2017} for the case that $\Gamma(t)$ is of Sobolev class.

\subsection{The evolution equation for the free-surface}
We extend the parameterization \eqref{Gamma} to a diffeomorphism $\psi$ of $\D$ as
\begin{equation}\label{psi_ww}
\psi(x_1,x_2) = (x_1, x_2 + h(x_1,t))\qquad \Forall (x_1,x_2) \in \D\,,
\end{equation}
and set
\begin{equation}\label{A_matrix}
\nabla \psi = \left[\begin{matrix}
1 & 0 \\
\partial_1 h & 1
\end{matrix}\right],\quad
A = (\nabla \psi)^{-1} = \left[\begin{matrix}
1 & 0 \\
-\partial_1 h & 1
\end{matrix}\right]\,.
\end{equation}
We then define the stream function  on the reference domain $\D$ as $\varphi = \vartheta \circ \psi$.   We then compute that
$$
\v = \u \circ \psi
= (\nabla^\perp \vartheta)\circ \psi
= (-A^k_2 \partial_k \varphi, A^k_1 \partial_k \varphi)
= (-\partial_2 \varphi, \partial_1 \varphi - \bp h \partial_2 \varphi)\,.
$$
From (\ref{ww}e),
$\partial_t h(x_1,t) = \v \cdot (-\bp h,1) $ on $\mathbb{S}^1$, so that
\begin{equation}\label{h_eq_1phase}
\partial_t h = \bp\varphi \qquad\text{on}\quad \mathbb{S}^1\,.
\end{equation}

\section{Stokes expansion and linear recursion for the time-dependent water waves}\label{sec:stokes_expansion}
\subsection{Stokes expansion}
Letting $0<\epsilon < 1 $ denote the steepness parameter (which can be
  viewed as the ratio of the amplitude to characteristic wavelength),
we consider the following Stokes expansion \emph{ansatz}:
\begin{equation}
\label{ansatz}
h(x_1,t)   =  \epsilon  \widetilde{h}(x_1, t) \,, \  \
\varphi(x,t)  = \epsilon  \widetilde{\varphi}( x, t)\,, \ \
\widebar{\omega}(x_1,t)  = \epsilon  \widetilde{\omega}( x_1, t) \,,
\end{equation}
where
\begin{subequations}\label{expansions}
\begin{align}
\widetilde{h}(x_1,t) &= h_0(x_1,t) + \epsilon h_1(x_1,t) + \epsilon^2 h_2(x_1,t) + \cdots\,,\\
\widetilde{\varphi}(x,t) &= \varphi_0(x,t) + \epsilon \varphi_1(x,t) + \epsilon^2 \varphi_2(x,t) + \cdots\,,\\
\widetilde{\omega}(x_1,t) &= \omega_0(x_1,t) + \epsilon \omega_1(x_1,t) + \epsilon^2 \omega_2(x_1,t) + \cdots\,.
\end{align}
\end{subequations}
In particular, at the initial time $t=0$, we have
\begin{equation}\label{initialdata}
(h(x_1,0),\partial_t h(x_1,0)\big) = \left(\hinit(x_1),\htinit(x_1)\right)\text{ for all $x_1\in \mathbb{S}^1$}.
\end{equation}
or, equivalently,
\begin{equation}\label{initialdata2}
(\widetilde{h}(x_1,0),\partial_t \widetilde{h}(x_1,0)\big) = \left(\frac{\hinit(x_1)}{\epsilon}, \frac{\htinit(x_1)}{\epsilon}\right)\text{ for all $x_1\in \mathbb{S}^1$}.
\end{equation}

\subsection{Linear recursion for the stream function}
Using \eqref{A_matrix}, the scalar Neumann problem  \eqref{theta_eq_1phase} can be written as
\begin{subequations}\label{theta_eq_1phase_ALE3}
\begin{alignat}{2}
\partial_k(A^k_\ell A^j_\ell \partial_j \varphi) &= 0 &&\text{in}\quad \D\,,\\
A^k_\ell \p_k\varphi \bfn_\ell &= -\omega \circ \psi \qquad&&\text{on}\quad \mathbb{S}^1\,.
\end{alignat}
\end{subequations}
and in  expanded form,
\begin{subequations}\label{theta_eq_1phase_ALE}
\begin{alignat}{2}
\Delta \varphi &= 2 \bp h \p_{\!12} \varphi + \partial_1^2  h \p_{\!2}\varphi - (\bp h)^2 \p_{\!2}^2 \varphi \qquad\qquad\quad&&\text{in}\quad \D \,,\\
\dfrac{\p \varphi}{\p \bN} &= -\dfrac{\widebar{\omega}}{1+(\bp h)^2}  +\frac{\p_{\!1} h}{1+(\bp h)^2}\p_{\!1}\varphi \qquad&&\text{on}\quad \mathbb{S}^1 \,.
\end{alignat}
\end{subequations}

Substitution of our Stokes expansion \eqref{ansatz} shows that
\eqref{theta_eq_1phase_ALE} is equivalent to the following linear recursion relation for $k \ge 0$:

\begin{subequations}
\label{phi_recursion}
\begin{alignat}{2}
\Delta \varphi_k &= \p_{\!2} \left[\sum_{j=0}^{k-1}  \left( 2\p_{\!1} h_j \p_{\!1} \varphi_{k-1-j} + \p_{\!1}^2 h_j  \varphi_{k-1-j} \right) - \sum_{j=0}^{k-2} \sum_{r=0}^j \p_{\!1} h_r \p_{\!1} h_{j-r} \p_{\!2} \varphi_{k-2-j} \right] &&\ \text{ in }\ \D\,,\\
\dfrac{\p \varphi_k}{\p \bN} &= -\omega_k
+ \sum_{j = 0}^{k-1} \p_{\!1} h_j \p_{\!1} \varphi_{k-1-j} - \sum_{j=0}^{k-2} \sum_{r=0}^j \p_{\!1} h_r \p_{\!1} h_{j-r} \p_{\!2} \varphi_{k-2-j}  && \ \text{ on }\ \mathbb{S}^1\,.
\end{alignat}
\end{subequations}

\subsection{Linear recursion for the height function}
Substitution of \eqref{ansatz} into \eqref{h_eq_1phase} shows that
\begin{equation}\label{hk_ww}
\partial_{t} h_k = \p_{\!1} \varphi_k \qquad\text{on}\quad \mathbb{S}^1\,.
\end{equation}

\subsection{Linear recursion for  the  tangential velocity}
In absence of surface tension effects ($\lambda=0$), we have that \eqref{omega_evolution_eq_1phase_h} is equivalent to
\begin{align}
\partial_t \widebar{\omega} &= - g \p_{\!1} h + \frac{1}{2} \p_{\!1} \big[(1+(\p_{\!1} h)^2) ^{-1} \big(|\partial_t h|^2  - |\bar \omega|^2 + 2 \widebar{\omega} \partial_t h \p_{\!1} h\big) \big]\nonumber\\
&= - g \p_{\!1} h + (1+(\p_{\!1} h)^2)\frac{\p_{\!1} \big[ |\partial_t h|^2  - |\bar \omega|^2 + 2 \widebar{\omega} \partial_t h \p_{\!1} h \big]}{2}\nonumber\\
&\quad-\p_{\!1} h\p_{\!1}^2h\big[ |\partial_t h|^2  - |\bar \omega|^2 + 2 \widebar{\omega} \partial_t h \p_{\!1} h \big]  -\partial_t \widebar{\omega}(2(\partial_1 h)^2+(\partial_1 h)^4)\nonumber\\
&\quad -g\partial_1 h(2(\partial_1 h)^2+(\partial_1 h)^4)\label{omega_evolution_eq_1phase_h2}\,.
\end{align}

Substitution of the asymptotic expansion (\ref{expansions}) into \eqref{omega_evolution_eq_1phase_h} shows that
\begin{align}
\partial_t \omega_k &= - g \p_{\!1} h_k + \sum_{\ell=0}^{k-1}\frac{\p_{\!1} \left[\partial_t h_{k-1-\ell}\partial_t h_{\ell}  - \omega_\ell\omega_{k-1-\ell}\right]}{2}+ \sum_{\ell=0}^{k-2}\sum_{n=0}^{\ell} \p_{\!1} \left[ \omega_n \partial_t h_{\ell-n} \p_{\!1} h_{k-2-\ell} \right]\nonumber\\
&\quad +\sum_{\ell=0}^{k-3}\sum_{n=0}^{\ell}\sum_{j=0}^n\p_{\!1} h_j\p_{\!1} h_{n-j}\frac{\p_{\!1} \big[ \partial_t h_{\ell-n}\partial_t h_{k-3-\ell}  - \omega_{\ell-n}\omega_{k-3-\ell} \big]}{2}\nonumber\\
&\quad +\sum_{\ell=0}^{k-4}\sum_{n=0}^{\ell}\sum_{j=0}^n\sum_{m=0}^{j}\p_{\!1} h_m\p_{\!1} h_{j-m}\p_{\!1} \big[ \omega_{n-j} \partial_t h_{\ell-n} \p_{\!1} h_{k-4-\ell} \big]\nonumber\\
&\quad -\sum_{\ell=0}^{k-3}\sum_{n=0}^{\ell}\sum_{j=0}^n\p_{\!1} h_j\p_{\!1}^2 h_{n-j}\left[\partial_t h_{\ell-n}\partial_t h_{k-\ell} -\omega_{\ell-n}\omega_{k-3-\ell} \right]\nonumber\\
&\quad -2\sum_{\ell=0}^{k-4}\sum_{n=0}^{\ell}\sum_{j=0}^n\sum_{m=0}^j\p_{\!1} h_m\p_{\!1}^2 h_{j-m}\omega_{n-j} \partial_t h_{\ell-n} \p_{\!1} h_{k-4-\ell}\nonumber\\
&\quad -2\sum_{\ell=0}^{k-2}\sum_{n=0}^{\ell}\partial_t\omega_n\partial_1 h_{\ell-n}\partial_1 h_{k-2-\ell} -2g\sum_{\ell=0}^{k-2}\sum_{n=0}^{\ell}\partial_1h_n\partial_1 h_{\ell-n}\partial_1 h_{k-2-\ell}\nonumber\\
&\quad -\sum_{\ell=0}^{k-4}\sum_{n=0}^{\ell}\sum_{j=0}^n\sum_{m=0}^j\partial_t\omega_m\partial_1 h_{j-m}\partial_1 h_{n-j}\partial_1 h_{\ell-n}\partial_1 h_{k-4-\ell}\nonumber\\
&\quad -g\sum_{\ell=0}^{k-4}\sum_{n=0}^{\ell}\sum_{j=0}^n\sum_{m=0}^j\partial_1h_m\partial_1 h_{j-m}\partial_1 h_{n-j}\partial_1 h_{\ell-n}\partial_1 h_{k-4-\ell}\,.\label{wk_ww}
\end{align}

\section{Derivation of the quadratic and cubic $h$-models}\label{sec:hmodels}

\subsection{Preliminary lemmas}
The linear recursion for the stream function $\varphi_k$ given in (\ref{phi_recursion}) can be decomposed into simpler elliptic equations.  Thus,
given certain forcing functions $h, \varphi$ and $g$, we shall focus on the following two elliptic equations
\begin{equation}\label{elliptic1}
\Delta X = \p_{\!2}\big[2 (\p_{\!1} h) (\p_{\!1} \varphi) + (\p_{\!1}^2 h) \varphi \big] \ \text{ in } \ \D \,, \ \text{ and } \
\p_{\!2} X = (\p_{\!1} h) (\p_{\!1} \varphi) \ \text{on } \ \mathbb{S}^1\,,
\end{equation}
and
\begin{equation}\label{elliptic2}
\Delta Y = \p_{\!2} g \ \text{ in }\ D\,, \ \text{ and } \
\p_{\!2} Y = g \ \text{ on }\ \mathbb{S}^1\,.
\end{equation}
We shall make use of two lemmas that show that the restriction of the solutions of \eqref{elliptic1} and  \eqref{elliptic1} to $\mathbb{S}^1$ can be expressed
in terms of  the functions on $\mathbb{S}^1$: $h_0,\cdots,h_{k-1}$ and $\omega_0,\cdots,\omega_{k-1}$.

Following our discussion in Section \ref{sec::Fourier}, a harmonic function $f(x_1,x_2)$  in $\D$ can be expanded as
$$
f(x_1,x_2) = \sum_{n = - \infty }^ \infty  \ft{f}(n) e^{inx_1 + |n|x_2}\quad\text{for all $(x_1,x_2) \in \D$}
$$
for some complex coefficients $\ft{f}(n)$ that do not depend on $x_2$.   For example, the stream function
$\varphi_0$, solving (\ref{phi_recursion}) with $k=0$, is harmonic and hence
$\varphi_0(x_1,x_2) = \sum\limits_{n\in \bbZ} \ft{\varphi_0}(n) e^{inx_1 + |n|x_2}$.
For $k=1$, the right-hand side of (\ref{phi_recursion}a)   is given by
\begin{align*}
& \p_{\!2}\big[2 (\p_{\!1} h_0) (\p_{\!1} \varphi_0) + (\p_{\!1}^2 h_0) \varphi_0 \big](x_1,x_2) 
= \sum_{n=- \infty }^\infty  \sum_{m=- \infty }^\infty |m| (m^2 - n^2) \ft{h_0}(n-m) \ft{\varphi_0}(m) e^{inx_1 + |m|x_2}\,,
\end{align*}
and the right-hand side of (\ref{phi_recursion}b) is
\begin{align*}
& (\p_{\!1} h_0) (\p_{\!1} \varphi_0) 
= -  \sum_{n=- \infty }^\infty  \sum_{m=- \infty }^\infty m (n-m) \ft{h_0}(n-m) \ft{\varphi_0}(m) e^{in x_1 + |m|x_2}\,.
\end{align*}
It follows that the solution $\varphi_1$ can be written via the expansion
$$
\varphi_1(x_1,x_2) = \sum_{n,m\in\bbZ} \ft{P_1}_{n,m} e^{in x_1 + |m|x_2} \,,
$$
where $ \sum_{n,m\in\bbZ}$ denotes the double sum  $\sum_{n=- \infty }^\infty  \sum_{m=- \infty }^\infty$ and
 $\{\ft{P_1}_{n,m}\}_{n,m\in\bbZ}$ is a (double) sequence of complex numbers.   Using the recursion formula \eqref{phi_recursion},
an induction argument then shows that for all $j\in \bbN$, the stream function $\varphi_j$ can be written as the expansion
$$
\varphi_j(x_1,x_2) = \sum_{n,m\in\bbZ} \ft{P_j}_{n,m}(x_2) e^{in x_1 + |m|x_2}\,,
$$
where for each fixed $j,n,m$,  $\ft{P_j}_{n,m}(x_2)$ is a polynomial  (of degree $j-1$) function of $x_2$.
This motivates the following two lemmas.

\begin{lemma}\label{lem:p1X}
Let $h:\mathbb{S}^1\to\bbR$ and $\varphi:\D\to \bbR$ denote $2\pi$-periodic functions of $x_1$, such that
$$
h(x_1) = \sum_{k\in\bbZ, k\ne 0} \ft{h}_k e^{ikx_1}\,,\quad \varphi(x_1,x_2) =
\sum_{k,m\in\bbZ} \ft{P}_{k,m}(x_2) e^{ikx_1 + |m|x_2} \,,
$$
where $x_2 \mapsto \ft{P}_{k,m}(x_2)$ is a polynomial function. If $X$ is the unique solution to \eqref{elliptic1}, then
\begin{equation}\label{p1X}
(\p_{\!1} X)(x_1,0) =  - H \big[(\p_{\!1} h)(\p_{\!1} \varphi)\big] - \smallexp{$\sum_{k,\ell,m\in \bbZ}$} i\text{\rm sgn}(k) |m| (\ell^2 - k^2) \ft{h}_{k-\ell} \smallexp{$\sum_{j=0}^\infty \ddfrac{(-1)^j \ft{P}^{(j)}_{\ell,m}(0)}{(|m|+|k|)^{j+1}}$} e^{ikx_1},
\end{equation}
where $\ft{P}^{(j)}_{\ell,m}(0)$ denotes $\p_2^j\ft{P}_{\ell,m}(x_2)$  evaluated at $x_2=0$.
Moreover,
if $\varphi$ is harmonic in $\D$ so that $\varphi(x_1,x_2) = \sum\limits_{k\in\bbZ} \ft{\varphi}_k e^{ikx_1 + |k|x_2}$, then
    \begin{equation}\label{p1X2}
    \p_{\!1} X = - \Lambda [h \p_{\!1}\varphi] + \p_{\!1} (h\Lambda \varphi) = \p_{\!1} \big(\comm{h}{H} \p_{\!1}\varphi\big)\quad\text{on }\ \mathbb{S}^1\,,
    \end{equation}
where $\comm{\cdot }{\cdot }$ denotes the commutator.
\end{lemma}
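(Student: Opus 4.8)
The plan is to solve \eqref{elliptic1} explicitly in Fourier variables, extract the trace of $\p_{\!1}X$ on $\mathbb{S}^1$, and then identify the resulting sum with the commutator expression. First I would write $X$ as a superposition of a harmonic piece matching the Neumann data and a particular solution absorbing the interior forcing. Writing the right-hand side of \eqref{elliptic1} in the given Fourier basis, the interior forcing at frequency $k$ in $x_1$ is a sum over $\ell,m$ of terms of the form $|m|(\ell^2-k^2)\ft{h}_{k-\ell}\ft{P}_{\ell,m}(x_2)e^{ikx_1+|m|x_2}$ (matching the displayed computation for $k=1$ preceding the lemma), and similarly the Neumann data $(\p_{\!1}h)(\p_{\!1}\varphi)$ on $\mathbb{S}^1$ expands in the same way. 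For each fixed frequency $k$, one solves the ODE $(\p_2^2-k^2)\ft{X}_k(x_2) = (\text{polynomial})\,e^{|m|x_2}$ with boundary data at $x_2=0$ and decay as $x_2\to-\infty$; the particular solution for a forcing $x_2^j e^{|m|x_2}$ is obtained by repeated integration, which produces the geometric-type series $\sum_{j\ge0}(-1)^j\ft{P}^{(j)}_{\ell,m}(0)/(|m|+|k|)^{j+1}$ upon evaluation at $x_2=0$ — this is the mechanism that generates the second term in \eqref{p1X}. Assembling the homogeneous part (which contributes the Neumann data term, yielding $-H[(\p_{\!1}h)(\p_{\!1}\varphi)]$ after applying $\p_{\!1}$ and using $\partial_1 H=\Lambda$, $\widehat{Hf}(k)=-i\,\mathrm{sgn}(k)\ft f(k)$) with the particular part gives \eqref{p1X}.

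For the harmonic case \eqref{p1X2}, I would substitute $\ft{P}_{\ell,m}(x_2)=\ft{\varphi}_\ell\,\delta_{m,\ell}$ (a constant in $x_2$, supported on $m=\ell$) so that $\ft{P}^{(j)}_{\ell,m}(0)=0$ for $j\ge1$ and the infinite sum in \eqref{p1X} collapses to its $j=0$ term with denominator $|\ell|+|k|$. The double sum then becomes $-\sum_{k,\ell}i\,\mathrm{sgn}(k)\frac{|\ell|(\ell^2-k^2)}{|\ell|+|k|}\ft{h}_{k-\ell}\ft{\varphi}_\ell e^{ikx_1}$, and using $\ell^2-k^2=(|\ell|-|k|)(|\ell|+|k|)$ this simplifies to $-\sum_{k,\ell}i\,\mathrm{sgn}(k)|\ell|(|\ell|-|k|)\ft{h}_{k-\ell}\ft{\varphi}_\ell e^{ikx_1}$. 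Combined with the first term $-H[(\p_{\!1}h)(\p_{\!1}\varphi)]$, whose Fourier coefficient at $k$ is $-i\,\mathrm{sgn}(k)\sum_\ell (k-\ell)\ell\,\ft{h}_{k-\ell}\ft{\varphi}_\ell$, the two contributions should collapse (after using $\mathrm{sgn}(k)|\ell|=\mathrm{sgn}(k)\mathrm{sgn}(\ell)\ell$ and $\mathrm{sgn}(k)|k|=k$ and regrouping) to the Fourier side of $-\Lambda[h\p_{\!1}\varphi]+\p_{\!1}(h\Lambda\varphi)$; the final equality with $\p_{\!1}(\comm{h}{H}\p_{\!1}\varphi)$ then follows from $\comm{h}{H}\p_{\!1}\varphi = h\,H\p_{\!1}\varphi - H(h\p_{\!1}\varphi)$, the identities $\partial_1 H=\Lambda$ and $H\p_{\!1}=\Lambda$, and the Leibniz rule, together with the fact that $h$ has no zero mode so all operators are well-defined.

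The main obstacle I anticipate is bookkeeping the convergence and interchange of sums in the non-harmonic case: one must justify that the ODE solution operator applied termwise to the polynomial-times-exponential forcing produces the stated convergent $j$-series (which requires $|m|+|k|>0$, i.e. handling the zero-frequency contributions — here the hypothesis $\ft h_0 = 0$ is what rescues the $k=\ell$, $m=0$ terms), and that the resulting double/triple sums can be freely rearranged. The algebraic collapse in the harmonic case is elementary once the sign and absolute-value identities are tracked carefully, so I expect the analytic justification of the series manipulation — not the final identity — to be where the real work lies; this is presumably deferred to, or guaranteed by, the polynomial structure of $\ft{P}_{k,m}(x_2)$ established in the induction preceding the lemma, so citing that structure should suffice.
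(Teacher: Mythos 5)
Your proposal follows essentially the same route as the paper: expand $X$ in Fourier modes in $x_1$, solve the resulting ODE $(\p_2^2-k^2)\ft{X}_k=(\text{polynomial})\,e^{|m|x_2}$ by variation of parameters (your homogeneous-plus-particular split, with the homogeneous part carrying the Neumann data and producing $-H[(\p_{\!1}h)(\p_{\!1}\varphi)]$, and the particular part generating the $j$-series), and then collapse the series via $\ft{P}_{\ell,m}=\ft{\varphi}_\ell\delta_{\ell m}$ and the factorization $\ell^2-k^2=(|\ell|-|k|)(|\ell|+|k|)$ in the harmonic case. This is exactly the paper's argument, so the proposal is correct.
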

\begin{proof}
With $X(x_1,x_2) = \sum\limits_{k\in\bbZ} \ft{X}_k(x_2) e^{ikx_1}$,
 $\ft{X}_k(x_2)$ satisfies the differential equation
\begin{align*}
\p_{\!2}^2 \ft{X}_k(x_2) - k^2 \ft{X}_k(x_2) &=
\sum_{\ell,m\in \bbZ} |m| (\ell^2 - k^2) \ft{h}_{k-\ell} \ft{P}_{\ell,m} e^{|m|x_2} \ \   \text{ for } x_2 < 0\,,\\
(\p_{\!2} \ft{X}_k)(0) &= 
\sum_{\ell,m\in\bbZ} (\ell-k)\ell \ft{h}_{k-\ell} \ft{P}_{\ell,m} \,,
\end{align*}
whose solution is given by the variation-of-parameters formula: for $k\ne 0$,
\begin{align*}
\ft{X}_k(x_2) &= \frac{1}{|k|} \sum_{\ell,m\in\bbZ} (\ell-k)\ell \ft{h}_{k-\ell} \ft{P}_{\ell,m}(0) e^{|k|x_2} \\
&\quad - \sum_{\ell,m\in \bbZ} \dfrac{|m| (\ell^2 - k^2)}{2|k|} \ft{h}_{k-\ell} \big(e^{|k| x_2} + e^{-|k| x_2}\big) \sum_{j=0}^\infty \dfrac{(-1)^j \ft{P}^{(j)}_{\ell,m}(0)}{(|m|+|k|)^{j+1}} \nonumber\\
&\quad + \sum_{\ell,m\in \bbZ} \dfrac{|m| (\ell^2 - k^2)}{2k} \ft{h}_{k-\ell} \int_0^{x_2} \ft{P}_{\ell,m}(y_2) \big[e^{(|m|-k)y_2 + kx_2} - e^{(|m|+k)y_2-kx_2}\big]dy_2 
\,.
\end{align*}
Therefore,
\begin{equation}\label{ikX}
i k \ft{X}_k(0) = i \text{\rm sgn}(k) \sum_{\ell,m\in\bbZ} \Big[(\ell-k)\ell \ft{h}_{k-\ell} \ft{P}_{\ell,m} - |m| (\ell^2 - k^2) \ft{h}_{k-\ell} \sum_{j=0}^\infty \ddfrac{(-1)^j \ft{P}^{(j)}_{\ell,m}(0)}{(|m|+|k|)^{j+1}}\Big] \,,
\end{equation}
and (\ref{p1X}) follows from the Fourier inversion formula.

In the case that $\varphi$ is harmonic in $\D$ or equivalently, the Fourier coefficients are given as $\ft{P}_{\ell,m} = \ft{\varphi}_\ell$ if $\ell = m$
and $\ft{P}_{\ell,m} = 0$ if $\ell \ne m$, then the identity \eqref{ikX} shows that
\begin{align*}
i k \ft{X}_k(0) &=i \text{\rm sgn}(k) \sum_{\ell\in\bbZ} (\ell-k)\ell \ft{h}_{k-\ell} \sum_{m\in\bbZ} \ft{P}_{\ell,m} - i\text{sgn}(k) \sum_{\ell\in \bbZ}
\dfrac{|m| (\ell^2 - k^2)}{|m|+|k|} \ft{h}_{k-\ell} \sum_{m\in\bbZ} \ft{P}_{\ell m} \\
&= i \text{\rm sgn}(k) \sum_{\ell\in\bbZ} (\ell-k)\ell \ft{h}_{k-\ell} \ft{\varphi}_\ell - i\text{sgn}(k) \sum_{\ell\in \bbZ} \dfrac{|\ell| (\ell^2 - k^2)}{|\ell|+|k|} \ft{h}_{k-\ell} \ft{\varphi}_{\ell} \\
&= i \text{\rm sgn}(k) \sum_{\ell\in\bbZ} \big[i (k-\ell)\ft{h}_{k-\ell}\big] (i \ell \ft{\varphi}_\ell) - i\text{sgn}(k) \sum_{\ell\in\bbZ} (|\ell| - |k|) \ft{h}_{k-\ell} \ft{\Lambda \varphi}_\ell
\end{align*}
and hence on $\mathbb{S}^1$,
\begin{align*}
\p_{\!1} X &= - H \big[(\p_{\!1} h) (\p_{\!1} \varphi)\big] + H (h \Lambda \varphi) + \p_{\!1} (h \Lambda \varphi) \\
&= - H \big[h \p_{\!1}^2 \varphi + (\p_{\!1} h)(\p_{\!1} \varphi)\big] + \p_{\!1} (h \Lambda \varphi) = - \Lambda (h \p_{\!1} \varphi) + \p_{\!1} (h \Lambda \varphi)
\end{align*}
from which (\ref{p1X2}) follows.
\end{proof}

\begin{lemma}\label{lem:p1Y}
Let $g:\D\to \bbR$ be a $2\pi$-periodic function of $x_1$, such that
$$
g(x_1,x_2) = \sum_{k,m \in\bbZ} \ft{g}_{k,m} e^{ikx_1 + |m|x_2}\,,
$$
and let $Y$ denote the unique solution to \eqref{elliptic2}.
Then
\begin{equation}\label{p1Y}
\p_{\!1} Y = - H g - i \text{\rm sgn}(k) \sum_{k,m\in\bbZ} \frac{|m|}{|m|+|k|} \ft{g}_{k,m} e^{ikx_1 } \quad\text{on }\ \mathbb{S}^1\,.
\end{equation}
\end{lemma}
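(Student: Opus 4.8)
The plan is to mimic exactly the structure of the proof of Lemma~\ref{lem:p1X}, but in the simpler setting where the elliptic problem \eqref{elliptic2} has no polynomial dependence in $x_2$ in the data. First I would expand $Y$ in a Fourier series in $x_1$, writing $Y(x_1,x_2)=\sum_{k\in\bbZ}\ft{Y}_k(x_2)e^{ikx_1}$, and substitute into \eqref{elliptic2}. Since $\Delta Y=\p_{\!2}g$ with $g=\sum_{k,m}\ft{g}_{k,m}e^{ikx_1+|m|x_2}$, the coefficient $\ft{Y}_k(x_2)$ satisfies the ODE $\p_{\!2}^2\ft{Y}_k-k^2\ft{Y}_k=\sum_{m\in\bbZ}|m|\ft{g}_{k,m}e^{|m|x_2}$ on $x_2<0$, together with the Neumann condition $(\p_{\!2}\ft{Y}_k)(0)=\sum_{m\in\bbZ}\ft{g}_{k,m}$.

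Next I would solve this constant-coefficient ODE by variation of parameters, keeping only the solution that decays as $x_2\to-\infty$ (this is the normalization that makes the solution of \eqref{elliptic2} unique up to an additive constant, which is killed by the derivative $\p_{\!1}$). A particular solution to the forced equation is $\frac{|m|}{m^2-k^2}\ft{g}_{k,m}e^{|m|x_2}$ when $|m|\ne|k|$; the homogeneous part is a multiple of $e^{|k|x_2}$, whose coefficient is fixed by the boundary condition at $x_2=0$. Evaluating at $x_2=0$ and multiplying by $ik$ gives $ik\,\ft{Y}_k(0)$; the arithmetic simplification $\frac{|m|}{m^2-k^2}\cdot|k|$ combined with the contribution from the homogeneous term should collapse, just as in Lemma~\ref{lem:p1X}, to the clean expression $ik\,\ft{Y}_k(0)=-i\,\text{sgn}(k)\,\ft{g}_{k,0}\cdot(\text{sum giving }Hg)-i\,\text{sgn}(k)\sum_{m}\frac{|m|}{|m|+|k|}\ft{g}_{k,m}$; one then applies the Fourier inversion formula, recognizing $-i\,\text{sgn}(k)\sum_{m}\ft{g}_{k,m}$ as the Fourier multiplier description of $-Hg$ on $\mathbb{S}^1$, to recover \eqref{p1Y}.

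The one point that requires a little care — and the main (minor) obstacle — is the resonant case $|m|=|k|$, where the particular-solution formula $\frac{|m|}{m^2-k^2}$ is singular and one instead needs a particular solution of the form $(\text{const})\,x_2 e^{|k|x_2}$. I would check that after including this resonant term and re-imposing the Neumann condition, the value $\ft{Y}_k(0)$ and hence $ik\,\ft{Y}_k(0)$ is still given by the same formula obtained by formally taking the limit $|m|\to|k|$ in the non-resonant expression, so that \eqref{p1Y} holds uniformly in $k,m$ (note $\frac{|m|}{|m|+|k|}=\frac12$ when $|m|=|k|$, which is finite, so no singularity appears in the final answer). Apart from that case distinction, the computation is a direct specialization of the argument already carried out for Lemma~\ref{lem:p1X} with all the $x_2$-polynomial bookkeeping (the $\ft{P}^{(j)}_{\ell,m}$ terms) suppressed, so I would present it briefly and refer back to that proof for the structure.
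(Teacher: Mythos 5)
Your proposal is correct and follows essentially the same route as the paper: expand $Y$ in a Fourier series in $x_1$, reduce to the ODE $\p_{\!2}^2\ft{Y}_k-k^2\ft{Y}_k=\sum_m|m|\ft{g}_{k,m}e^{|m|x_2}$ with Neumann data $\sum_m\ft{g}_{k,m}$, solve explicitly, and read off $ik\,\ft{Y}_k(0)$. The only cosmetic difference is that the paper writes down a single variation-of-parameters formula (with an integral term that vanishes at $x_2=0$ and absorbs the resonant case $|m|=|k|$ automatically), whereas you use an explicit particular solution plus a separate check of the resonant case; your limiting value $\frac{|m|}{|m|+|k|}=\frac12$ there is indeed the correct one.
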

\begin{proof}
Letting $Y(x_1,x_2) = \sum\limits_{k\in\bbZ} \ft{Y}_k(x_2) e^{ikx_1}$, we find that
\begin{align*}
\p_{\!2}^2 \ft{Y}_k(x_2) - k^2 \ft{Y}_k(x_2) &= \sum_{m\in\bbZ} |m| \ft{g}_{k,m} e^{|m|x_2} \ \ \text{ for } x_2 < 0\,,\\
\p_{\!2} \ft{Y}_k(0) &= \sum_{m \in\bbZ} \ft{g}_{k,m}\,.
\end{align*}
The solution for the case $k\ne 0$ is obtained via the variation-of-parameters formula as
\begin{align*}
\ft{Y}_k(x_2) &= \frac{1}{|k|} \sum_{m \in\bbZ} \ft{g}_{k,m} e^{|k|x_2} - \frac{1}{2|k|} \sum_{m\in\bbZ} \dfrac{|m|}{|m|+|k|} \ft{g}_{k,m} (e^{|k|x_2} + e^{-|k|x_2}) \\
&\quad + \sum_{m\in\bbZ} \dfrac{|m|}{2k} \ft{g}_{k,m} \int_0^{x_2} \big[e^{(|m|-k)x_2 + kx_2} - e^{(|m|+k)x_2-kx_2}\big] dx_2\,.
\end{align*}
Therefore,
\begin{align*}
ik \ft{Y}_k(0) &= i \text{\rm sgn}(k) \sum_{m \in\bbZ} \ft{g}_{k,m} - i \text{\rm sgn}(k) \sum_{m\in\bbZ} \dfrac{|m|}{|m|+|k|} \ft{g}_{k,m}
\end{align*}
which is (\ref{p1Y}).
\end{proof}

\subsection{The quadratic $h$-model}
From \eqref{phi_recursion},
\begin{equation}\label{phi0}
\Delta\varphi_0 = 0 \text{ \ in } \D \ \text{ and } \
\dfrac{\p \varphi_0}{\p \bN} = - \omega_0 \text{ \ on } \mathbb{S}^1\,,
\end{equation}
and
\begin{equation}\label{phi1}
\Delta\varphi_1 =  \p_{\!2}  \left(2 \p_{\!1} h_0 \partial_{1} \varphi_0 + \p_{\!1}^2 h_0 \ \varphi_0 \right) \text{ \ in }\D \  \text{ and }  \
\dfrac{\p \varphi_1}{\p \bN}= - \omega_1 +\p_{\!1} h_0 \p_{\!1} \varphi_{0} \text{ \ on } \mathbb{S}^1\,.
\end{equation}
We decompose  $\varphi_1 = \varphi_1^{\text{\tiny(a)}} + \varphi_1^{\text{\tiny(b)}}$, where $\varphi_1^{\text{\tiny(a)}}$ and $\varphi_1^{\text{\tiny(b)}}$ satisfy
\begin{subequations}\label{varphi1i_eq_ww}
\begin{alignat}{2}
\Delta\varphi_1^{\text{\tiny(a)}} &= \rf:= \p_{\!2} \left(2 \p_{\!1} h_0 \partial_{1} \varphi_0 + \p_{\!1}^2 h_0\  \varphi_0\right) \quad&&\text{in }\ \D\,,\\
\dfrac{\p \varphi_1^{\text{\tiny(a)}}}{\p \bN} &= \rg:= \p_{\!1} h_0 \p_{\!1} \varphi_{0} &&\text{on }\ \mathbb{S}^1\,,
\end{alignat}
\end{subequations}
and
\begin{subequations}\label{varphi1b_eq_ww}
\begin{alignat}{2}
\Delta\varphi_1^{\text{\tiny(b)}} &= 0 &&\text{in }\ \D\,,\\
\dfrac{\p \varphi_1^{\text{\tiny(b)}}}{\p \bN} &= - \omega_1 \quad&&\text{on }\ \mathbb{S}^1\,.
\end{alignat}
\end{subequations}
We note that  the solvability condition for \eqref{varphi1i_eq_ww} is satisfied since integration-by-parts shows that
\begin{align*}
\int_{\mathbb{S}^1\times \bbR^-}\p_{\!2} \left(2 \p_{\!1} h_0 \partial_{1} \varphi_0 + \p_{\!1}^2 h_0\  \varphi_0\right) dy & =
\int_{\mathbb{S}^1}     2 \p_{\!1} h_0 \partial_{1} \varphi_0 + \p_{\!1}^2 h_0\  \varphi_0 dx_1 =
\int_{\mathbb{S}^1} \p_{\!1} h_0 \p_{\!1} \varphi_{0} dx_1\,,
\end{align*}
and similarly the solvability condition for \eqref{varphi1b_eq_ww} is also satisfied:
$
\smallint{\mathbb{S}^1}{}\, \omega_1 dx_1 = 0$.
With the solvability conditions satisfied, the elliptic problems  (\ref{varphi1i_eq_ww}) and (\ref{varphi1b_eq_ww}) have  unique solutions in $H^1(\D)$ by the Lax-Milgram theorem.
Using the Hilbert transform,
\begin{equation} \label{p1_varphi01_ww}
\p_{\!1} \varphi_0 =  H \omega_0  \ \text{ and } \
\p_{\!1} \varphi_1^{\text{\tiny(b)}} = H \omega_1 \text{ \ on } \mathbb{S}^1\,.
\end{equation}
We can then apply Lemma \ref{lem:p1X} and conclude that
\begin{equation}\label{p1phi1i_ww}
\p_{\!1} \varphi_1^{\text{\rm\tiny(a)}}(x_1,0,t) = \p_{\!1} \big(\comm{h_0}{H} H \omega_0\big) = - H\big[(\p_{\!1}h_0)(H \omega_0) + h_0 \Lambda \omega_0 \big] - \p_{\!1} (h_0 \omega_0) \quad\text{on }\ \mathbb{S}^1\,.
\end{equation}

From the recursion for the tangential velocity and \eqref{hk_ww}, we have that
\begin{subequations}
\label{w0w1}
\begin{alignat}{2}
\partial_t h_{0} &= H \omega_0 &&\text{ on}\quad\mathbb{S}^1\,,\\
\partial_t\omega_{0} &= -g \p_{\!1} h_0 &&\text{ on}\quad\mathbb{S}^1\,,\\
\partial_t h_{1} &= H \omega_1- H\big[(\p_{\!1}h_0)(H \omega_0) + h_0 \Lambda \omega_0 \big] - \p_{\!1} (h_0 \omega_0) &&\text{ on}\quad\mathbb{S}^1\,,\\
\partial_t\omega_{1} &= -g \p_{\!1} h_1 + \frac{1}{2} \p_{\!1} \big(|\p_t h_0|^2 - |\omega_0|^2 \big) \qquad&&\text{ on}\quad\mathbb{S}^1\,.
\end{alignat}
\end{subequations}
We can write \eqref{w0w1} as system of wave equations,
\begin{subequations}
\label{h0h1}
\begin{align}
\p_{\!t}^2 h_0 + g \Lambda h_0 &=0 \\
\p_{\!t}^2 h_1 + g \Lambda h_1 &= \frac{1}{2} \Lambda  \big(|\p_{\!t} h_0|^2 - |H \p_{\!t} h_0|^2\big) \nonumber\\
&\quad - H\big[(\p_{\!1} \p_{\!t} h_0)(\p_{\!t} h_0) - g (\p_{\!1} h_0)(\Lambda h_0) + (\p_{\!t} h_0) (\p_{\!1} \p_{\!t} h_0) - g h_0 \Lambda \p_{\!1} h_0 \big] \nonumber\\
&\quad + \p_{\!1} \big[(\p_{\!t} h_0)(H \p_{\!t} h_0)\big] + g \p_{\!1} (h_0 \p_{\!1} h_0) \nonumber\\
&=  - \Lambda (|H \p_{\!t}h_0|^2) + g \Lambda (h_0 \Lambda h_0) + g \p_{\!1} (h_0 \p_{\!1} h_0) \,,
\end{align}
\end{subequations}
where, in the last equality, we have used the Tricomi identity
\begin{equation}\label{tricomi}
2H( f Hf) =  (Hf)^2 -   f^2 \,.
\end{equation}
The quadratic $h$-model follows from setting
\begin{equation}\label{hdefinition}
h(x_1,t)=  \epsilon h_0(x_1,t) + \epsilon ^2 h_1(x_1,t) \,,
\end{equation}
so that
\begin{align*}
\p_{\!t}^2 h + g \Lambda h &=    \epsilon ^2 \left(  - \Lambda (|H \p_{\!t}h_0|^2) + g \Lambda (h_0 \Lambda h_0) + g \p_{\!1} (h_0 \p_{\!1} h_0)\right) \\
&=  - \Lambda (|H \p_{\!t}h|^2) + g \Lambda (h \Lambda h) + g \p_{\!1} (h \p_{\!1} h)  + \O(\epsilon ^3)\,.
\end{align*}
Neglecting terms of order $O(\epsilon^3)$, the quadratic $h-$model reads
\begin{subequations}
\label{hmodel-quad}
\begin{align}
\p_{\!t}^2 h + g \Lambda h &=  - \Lambda (|H \p_{\!t}h|^2) - g\p_{\!1} (\comm{h}{H} \Lambda h) \,,\\
&=  2\p_{\!1}( \p_{\!t} h H \p_{\!t} h) - \Lambda \left( (\p_{\!t} h)^2\right) + g \Lambda (h \Lambda h) + g \p_{\!1} (h \p_{\!1} h)\,.
\end{align}
\end{subequations}

The quadratic $h$-model \eqref{hmodel-quad} modeling gravity water waves in deep water reduces to the ``Model'' equation obtained by Akers \& Milewski \cite{AkMi2010}, although in a very different way.   They first simplify the water waves problem
by making the assumption that the potential function at each recursion relation is set on the fixed domain with top boundary  given by
$x_2=0$ (rather than $x_2= h(x_1,t)$).  It is interesting to note that up to quadratic nonlinearity, this simplification produces the same $h$-model as we have obtained
by keeping the full water waves system in the asymptotics.  We note that
Akers \& Nicholls \cite{AkNi2010} later used a diffeomorphism (similar to our $\phi$) to fix the domain,  but only study the linear recursion for the traveling solitary wave
ansatz.

\begin{remark}
We observe that the quadratic $h$-model \eqref{hmodel-quad} is kept invariant by the scaling
$$
h_{\mu}(x_1,t)=\frac{1}{\mu^2}h(\mu^2 x_1,\mu t).
$$
This is the same scale invariance as for the full gravity water wave problem.
\end{remark}

\begin{remark}\label{remark}
Following a similar approach, for the case of an internal wave separating two perfect fluids with densities $\rho^+$ and $\rho^-$, we can derive the equation
\begin{equation}\label{waveeq}
\partial_t^2 h=Ag\Lambda h+A\Lambda(|H \partial_t h|^2)+A^2g \left(\Lambda(h\Lambda h)+ \partial_1 (h\p_{\!1} h)\right)
\end{equation}
where
$$
A= \frac{\rho^+ - \rho^-}{\rho^+ + \rho^-}
$$
is the Atwood number.
\end{remark}
A similar asymptotic model was derived in Granero-Belinch\'{o}n \& Shkoller \cite{GrSh2017} to study the two-fluid problem.

\subsection{The quadratic $h$-model with surface tension}\label{sectionST}For surface waves in the regime where the effects of both gravity and surface tension are similar in magnitude (wavelengths of order $L\approx\sqrt{73.5/981}cm$ or, equivalently, the Bond number $\frac{\lambda}{gL^2}\approx 1$), the previous recursion for the tangential velocity has to be changed. This is somewhat challenging for  general $k$ due to the denominator present in the expression for the  mean curvature;  however, for $k=0$ and $1$ this modification takes the following form:
\begin{subequations}
\label{w0w1surface}
\begin{alignat}{2}
\partial_t h_{0} &= H \omega_0 &&\text{ on}\quad\mathbb{S}^1\,,\\
\partial_t\omega_{0} &= -g \p_{\!1} h_0+\lambda \partial_1^3 h_0&&\text{ on}\quad\mathbb{S}^1\,,\\
\partial_t h_{1} &= H \omega_1- H\big[(\p_{\!1}h_0)(H \omega_0) + h_0 \Lambda \omega_0 \big] - \p_{\!1} (h_0 \omega_0) &&\text{ on}\quad\mathbb{S}^1\,,\\
\partial_t\omega_{1} &= -g \p_{\!1} h_1+\lambda \partial_1^3 h_1 + \frac{1}{2} \p_{\!1} \big(|\p_t h_0|^2 - |\omega_0|^2 \big) \quad&&\text{ on}\quad\mathbb{S}^1\,.
\end{alignat}
\end{subequations}
Equivalently, taking a time derivative, we have that
$$
\partial_t^2 h_{0} = -g \Lambda h_0-\lambda \Lambda^3 h_0
$$
\begin{align*}
\partial_t^2 h_{1} &= -g \Lambda h_1-\lambda \Lambda^3 h_1 + \frac{1}{2} \Lambda \big(|\p_t h_0|^2 - |H\p_t h_0|^2 \big)\\
&\quad- H\big[(\p_{\!1}\partial_t h_0)(\partial_t h_0)+(\p_{\!1}h_0)(H \partial_t \omega_0) + \partial_t h_0 \partial_1\partial_t h_0+h_0 \Lambda \partial_t \omega_0 \big] \\
&\quad - \p_{\!1} (\partial_th_0 \omega_0+h_0 \partial_t\omega_0)\\
&= -g \Lambda h_1-\lambda \Lambda^3 h_1 - \Lambda \big(|H\p_t h_0|^2 \big)- \Lambda\big[h_0 H \partial_t \omega_0 \big] - \p_{\!1} (h_0 \partial_t\omega_0)\\
&= -g \Lambda h_1-\lambda \Lambda^3 h_1 - \Lambda \big(|H\p_t h_0|^2 \big)+ \Lambda\big[h_0 \left(g \Lambda h_0+\lambda \Lambda^3 h_0\right) \big] - \p_{\!1} \left(h_0 \left(-g \p_{\!1} h_0+\lambda \partial_1^3 h_0\right)\right)\,.
\end{align*}

Then, a similar argument as before shows that, up to $\mathcal{O}(\epsilon^3)$, the quadratic $h-$model  \eqref{hmodel-quad} with surface tension modeling gravity-capillary waves in deep water is written as
\begin{equation}\label{hmodelsurface}
\partial_t^2 h=-g \Lambda h-\lambda \Lambda^3 h - \Lambda \big(|H\p_t h|^2 \big)+ \Lambda\big[h \left(g \Lambda h+\lambda \Lambda^3 h\right) \big] - \p_{\!1} \left(h \left(-g \p_{\!1} h+\lambda \partial_1^3 h\right)\right)\,.
\end{equation}

\subsection{The cubic $h$-model}  In order to derive the cubic h-model, we shall also need the equation that $\varphi_2$ satisfies; thus, in addition to \eqref{phi0} and \eqref{phi1}, we use
\eqref{phi_recursion} to find that
\begin{subequations}\label{phi2}
\begin{alignat}{2}
\Delta\varphi_2 &= \p_{\!2}\big[ 2 \p_{\!1} h_0 \partial_{12} \varphi_1 + 2 \p_{\!1} h_1 \partial_{1} \varphi_0 + \varphi_1 \p_{\!1}^2 h_0 + \varphi_0 \p_{\!1}^2 h_1 - (\p_{\!1} h_0)^2 \p_{\!2} \varphi_0  \big]\quad&&\text{in }\ \D \,,\\
\dfrac{\p \varphi_2}{\p \bN} &= - \omega_2 +\p_{\!1} h_0 \p_{\!1} \varphi_1 + \p_{\!1} h_1 \p_{\!1} \varphi_{0} - (\p_{\!1} h_0)^2 \p_{\!2} \varphi_0 &&\text{on }\ \mathbb{S}^1\,.
\end{alignat}
\end{subequations}
We decompose $\varphi_2$ as the sum
$\varphi_2 = \varphi_2^{\text{\tiny(a)}} + \varphi_2^{\text{\tiny(b)}} + \varphi_2^{\text{\tiny(c)}} + \varphi_2^{\text{\tiny(d)}}$, where
$\varphi_2^{\text{\tiny(a)}}$, $\varphi_2^{\text{\tiny(b)}}$, $\varphi_2^{\text{\tiny(c)}}$ and $\varphi_2^{\text{\tiny(d)}}$ satisfy
\begin{alignat*}{2}
\Delta\varphi_2^{\text{\tiny(a)}} &= 2 \p_{\!1} h_1 \partial_{12} \varphi_0 + \p_{\!1}^2 h_1 \p_{\!2} \varphi_0 \ \text{ in }\ \D
&& \qquad   \dfrac{\p \varphi_2^{\text{\tiny(a)}}}{\p \bN} = \p_{\!1} h_1 \p_{\!1} \varphi_{0}\ \text{ on } \  \mathbb{S}^1\,, \\
 \qquad  \Delta\varphi_2^{\text{\tiny(b)}} &= 2 \p_{\!1} h_0 \partial_{12} \varphi_1 + \p_{\!1}^2 h_0 \p_{\!2} \varphi_1 \ \text{ in } \ \D
&& \qquad\dfrac{\p \varphi_2^{\text{\tiny(b)}}}{\p \bN} = \p_{\!1} h_0 \p_{\!1} \varphi_1 \ \text{ on } \ \mathbb{S}^1\,,  \\
\Delta\varphi_2^{\text{\tiny(c)}} &= - (\p_{\!1} h_0)^2 \p_{\!2}^2 \varphi_0 \ \text{ in } \ \D
&&\qquad \dfrac{\p \varphi_2^{\text{\tiny(c)}}}{\p \bN} = - (\p_{\!1} h_0)^2 \p_{\!2} \varphi_0 \ \text{ on }  \ \mathbb{S}^1\,,\\
\Delta\varphi_2^{\text{\tiny(d)}} &= 0 \ \text{ in } \ \D
&& \qquad  \dfrac{\p \varphi_2^{\text{\tiny(d)}}}{\p \bN} = - \omega_2 \ \text{ on } \ \mathbb{S}^1\,.
\end{alignat*}
Note that $\p_{\!1} \varphi_2^{\text{\tiny(d)}} = H\omega_2$ on $\mathbb{S}^1$.\vspace{.1cm}

Solving  \eqref{phi0},  $\varphi_0(x_1,x_2,t) = -\sum\limits_{k\in\bbZ,k\ne 0} \dfrac{\ft{\omega_0}_k(t)}{|k|} e^{ikx_1 + |k| x_2}$, and by Lemma
\ref{lem:p1X},
\begin{equation}\label{p1phi2a}
\p_{\!1} \varphi_2^{\text{\tiny(a)}}
= \p_{\!1} (\comm{h_1}{H} H \omega_0)\,.
\end{equation}
Next, we write the solution to  \eqref{varphi1b_eq_ww} using the Fourier components $\ft{\varphi_1^{\text{\tiny(b)}}}(k,x_2,t) = -\dfrac{\ft{\omega_1}_k(t)}{|k|} e^{|k| x_2}$, and
using 
the variation-of-parameters solution to \eqref{varphi1i_eq_ww},
we see that
\begin{align*}
\ft{\varphi_1^{\text{\tiny(a)}}}(k,x_2,t) &= \dfrac{\text{\rm sgn}(k)}{k} \ft{\rg}(k,t) e^{|k|x_2} + \int_0^{x_2} \ft{\rf}(k,y_2,t) \dfrac{e^{k(x_2-y_2)} - e^{k(y_2-x_2)}}{2k} dy_2 \\
&= \frac{\ft{\smallexp{$(\p_{\!1}(h_0 H \omega_0))$}}_k(t)}{|k|} e^{|k|x_2} - \sum\limits_{\ell\in \bbZ} \ft{h_0}_{k-\ell}(t) \ft{\omega_0}_\ell(t) e^{|\ell|x_2} \,;
\end{align*}
thus,
\begin{align*}
\varphi_1(x_1,x_2,t) &= \sum_{k\in\bbZ} \dfrac{- \ft{\omega_1}_k(t) + \ft{\smallexp{$(\p_{\!1}(h_0 H \omega_0))$}}_k(t)}{|k|} e^{ikx_1 + |k|x_2} - \sum_{k,\ell\in \bbZ} \ft{h_0}_{k-\ell}(t) \ft{\omega_0}_\ell(t) e^{ikx_1 + |\ell|x_2}\,.
\end{align*}
It then follows from Lemma \ref{lem:p1X} with $\ft{P}_{k,m} = \delta_{km} \ddfrac{- \ft{\omega_1}_k(t) + \ft{(\p_{\!1}(h_0 H \omega_0))}_k(t)}{|k|} - \ft{h_0}_{k-m}(t) \ft{\omega_0}_m(t)$
 that
\begin{align}
\p_{\!1} \varphi_2^{\text{\tiny(b)}} &= \p_{\!1} \big(\comm{h_0}{H} H\omega_1\big) - \p_{\!1} \big(\comm{h_0}{H} \Lambda (h_0 H \omega_0)\big) + H \big[(\p_{\!1} h_0) \p_{\!1}(h_0 \omega_0)\big] \nonumber\\
&\quad + \sum_{k,\ell,m\in\bbZ} i \text{sgn}(k) \dfrac{|m| (\ell^2 - k^2)}{|m|+|k|} \ft{h_0}_{k-\ell} \ft{h_0}_{\ell-m} \ft{\omega_0}_m e^{ikx_1} \nonumber\\
&= \p_{\!1} \big(\comm{h_0}{H} H\omega_1\big) - \p_{\!1} \big(\comm{h_0}{H} \Lambda (h_0 H \omega_0)\big) + H \big[(\p_{\!1} h_0) \p_{\!1}(h_0 \omega_0)\big] \nonumber\\
&\quad - \sum_{k,\ell,m\in\bbZ} \text{sgn}(k) \dfrac{|m| (\ell+k)}{|m|+|k|} \ft{(\p_{\!1} h_0)}_{k-\ell} \ft{h_0}_{\ell-m} \ft{\omega_0}_m e^{ikx_1} \,. \label{p1phi2b}
\end{align}
Using Lemma \ref{lem:p1Y} with $g = - (\p_{\!1} h_0)^2(\p_{\!1}\varphi_0)$ (or equivalently with  $\ft{g}_{k,m} = \ft{(\p_{\!1}h_0)^2}_{k-m}\, \ft{\omega_0}_m$), we find that
\begin{align}
& (\p_{\!1} \varphi_2^{\text{\tiny(c)}})(x_1,0,t) \nonumber\\
&\quad = - H \big[(\p_{\!1}h_0)^2 \omega_0\big](t) - \sum_{k,\ell\in\bbZ, k,\ell\ne 0} i \text{\rm sgn}(k) \dfrac{|m|}{|m|+|k|} \ft{(\p_{\!1}h_0)^2}_{k-m}(t) \ft{\omega_0}_m(t) e^{ikx_1} \nonumber\\
&\quad = - H \big[(\p_{\!1}h_0)^2 \omega_0\big](t) - \sum_{k,\ell,m\in\bbZ} i \text{\rm sgn}(k) \dfrac{|m|}{|m|+|k|} \ft{(\p_{\!1}h_0)}_{k-\ell}(t) \ft{(\p_{\!1}h_0)}_{\ell-m}(t) \ft{\omega_0}_m(t) e^{ikx_1} \nonumber\\
&\quad = - H \big[(\p_{\!1}h_0)^2 \omega_0\big](t) + \sum_{k,\ell,m\in\bbZ} \text{\rm sgn}(k) \dfrac{|m|(\ell-m)}{|m|+|k|} \ft{(\p_{\!1} h_0)}_{k-\ell}(t) \ft{h_0}_{\ell-m}(t) \ft{\omega_0}_m(t) e^{ikx_1} \,. \label{p1phi2c}
\end{align}
Hence, combining (\ref{p1phi2a}), (\ref{p1phi2b}) and (\ref{p1phi2c}) and the fact that $\p_{\!1} \varphi_2^{\text{\tiny(d)}} = H\omega_2$ on $\mathbb{S}^1$,
\begin{align*}
& (\p_{\!1} \varphi_2)(x_1,0) = H \omega_2 + \p_{\!1} (\comm{h_1}{H} H \omega_0) + \p_{\!1} \big(\comm{h_0}{H} H\omega_1\big) - \p_{\!1} \big(\comm{h_0}{H} \Lambda (h_0 H \omega_0)\big) \\
&\ \ +\hspace{-1pt} H \big[(\p_{\!1} h_0) \p_{\!1}(h_0 \omega_0)\big] \hspace{-1pt}-\hspace{-1pt} H \big[(\p_{\!1}h_0)^2 \omega_0\big] \hspace{-1pt}-\hspace{-1.5pt} \sum_{k,\ell,m\in\bbZ} \!\text{\rm sgn}(k) \dfrac{|m|(m+k)}{|m|+|k|} \ft{(\p_{\!1} h_0)}_{k-\ell} \ft{h_0}_{\ell-m} \ft{\omega_0}_m e^{ikx_1} .
\end{align*}
Noting that for each fixed $k,m\in\bbZ$,
$$
\sum_{\ell\in\bbZ} \ft{\p_{\!1} h_0}_{k-\ell} \ft{h_0}_{\ell-m} = \ft{h_0 \p_{\!1}h_0}_{k-m} = \frac{1}{2} \ft{\p_{\!1} (h_0^2)}_{k-m} = \frac{i(k-m)}{2} \ft{(h_0^2)}_{k-m}\,,
$$
we have that
\begin{align*}
& \sum_{k,\ell,m\in\bbZ} \text{\rm sgn}(k) \dfrac{|m|(m+k)}{|m|+|k|} \ft{(\p_{\!1} h_0)}_{k-\ell} \ft{h_0}_{\ell-m}(t) \ft{\omega_0}_m e^{ikx_1} \\
&\qquad = \sum_{k,m\in\bbZ} i \text{\rm sgn}(k) \dfrac{|m|(m+k)(k-m)}{2(|m|+|k|)} \ft{(h_0^2)}_{k-m}(t) \ft{\omega_0}_m e^{ikx_1} \\
&\qquad = \frac{1}{2} \sum_{k,m\in\bbZ} i \text{\rm sgn}(k) |m|(|k|-|m|)\ft{(h_0^2)}_{k-m}(t) \ft{\omega_0}_m e^{ikx_1} = \dfrac{1}{2} \p_{\!1} (h_0^2 \Lambda \omega_0) + \dfrac{1}{2} H (h_0^2 \Lambda^2 \omega_0) \,.
\end{align*}
Therefore, we have that
\begin{subequations}
\label{dth2}
\begin{alignat}{2}
\p_{\!t} h_2 - H \omega_2 &= \p_{\!1} (\comm{h_1}{H} H \omega_0) + \p_{\!1} \big(\comm{h_0}{H} H\omega_1\big) - \p_{\!1} \big(\comm{h_0}{H} \Lambda (h_0 H \omega_0)\big) \nonumber\\
&\quad + H \big[(\p_{\!1} h_0) \p_{\!1}(h_0 \omega_0)\big] - H \big[(\p_{\!1}h_0)^2 \omega_0\big] - \dfrac{1}{2} \p_{\!1} (h_0^2 \Lambda \omega_0) - \dfrac{1}{2} H (h_0^2 \Lambda^2 \omega_0) \,,\\
\partial_t \omega_2 &= - g \p_{\!1} h_2 + \p_{\!1} \left[\partial_t h_{1}\partial_t h_{0}  - \omega_0\omega_{1}\right]+ \p_{\!1} \left[ \omega_0 \partial_t h_{0} \p_{\!1} h_{0} \right]\,,
\end{alignat}
\end{subequations}
We next time-differentiate (\ref{dth2}a) and substitute (\ref{dth2}b) to find that
\begin{align*}
\p_{\!t}^2 h_2 + g\Lambda h_2 &= \p_{\!1} (\comm{\p_{\!t} h_1}{H} H \omega_0) + \p_{\!1} (\comm{h_1}{H} H \p_{\!t} \omega_0) + \p_{\!1} \big(\comm{\p_{\!t} h_0}{H} H\omega_1\big) \\
&\quad +\p_{\!1} \big(\comm{h_0}{H} H \p_{\!t}\omega_1\big) - \p_{\!1} \big(\comm{\p_{\!t} h_0}{H} \Lambda (h_0 H \omega_0)\big) \\
&\quad - \p_{\!1} \big(\comm{h_0}{H} \Lambda (\p_{\!t} h_0 H \omega_0)\big) - \p_{\!1} \big(\comm{h_0}{H} \Lambda (h_0 H \p_{\!t} \omega_0)\big) \\
&\quad + H \big[(\p_{\!1} \p_{\!t} h_0) \p_{\!1}(h_0 \omega_0)\big] + H \big[(\p_{\!1} h_0) \p_{\!1}(\p_{\!t} h_0 \omega_0)\big]
 + H \big[(\p_{\!1} h_0) \p_{\!1}(h_0 \p_{\!t} \omega_0)\big] \nonumber\\
&\quad - 2 H \big[(\p_{\!1}h_0) (\p_{\!1} \p_{\!t} h_0) \omega_0\big] - H \big[(\p_{\!1}h_0)^2 (\p_{\!t} \omega_0)\big] - \p_{\!1} \big[h_0 (\p_{\!t} h_0) (\Lambda \omega_0) \big] \\
&\quad - \dfrac{1}{2} \p_{\!1} \big[h_0^2 (\Lambda \p_{\!t} \omega_0)\big] - H \big[h_0 (\p_{\!t} h_0)\Lambda^2 \omega_0\big] - \dfrac{1}{2} H (h_0^2 \Lambda^2 \p_{\!t} \omega_0)\\
&\quad +\Lambda \left[\partial_t h_{1}\partial_t h_{0}  - \omega_0\omega_{1}\right]+ \Lambda \left[ \omega_0 \partial_t h_{0} \p_{\!1} h_{0} \right] \,.
\end{align*}
Thanks to the identities in \eqref{w0w1}, we conclude that
$$
\omega_0=-H\p_{\!t} h_0\ \text{ and } \
\omega_1 = -H\p_{\!t} h_1 + \Lambda \big(\comm{h_0}{H}  \p_{\!t} h_0\big)\quad\text{on }\mathbb{S}^1.
$$

Thus,
\begin{align*}
\p_{\!t}^2 h_2 + g\Lambda h_2 &= \p_{\!1} (\comm{\p_{\!t} h_1}{H} \partial_t h_0) - g\p_{\!1} (\comm{h_1}{H} \Lambda h_0) + \p_{\!1} \big(\comm{\p_{\!t} h_0}{H} \left[\partial_t h_1-\p_{\!1} \big(\comm{h_0}{H} \partial_t h_0\big)\right]\big) \\
&\quad +\p_{\!1} \left(\comm{h_0}{H} \left[-g \Lambda h_1 + \frac{1}{2} \Lambda \big(|\p_t h_0|^2 - |H\partial_t h_0|^2 \big)\right]\right) - \p_{\!1} \big(\comm{\p_{\!t} h_0}{H} \Lambda (h_0 \partial_t h_0)\big) \\
&\quad - \p_{\!1} \big(\comm{h_0}{H} \Lambda ((\p_{\!t} h_0)^2)\big) + g\p_{\!1} \big(\comm{h_0}{H} \Lambda (h_0 \Lambda h_0)\big) \\
&\quad - H \big[(\p_{\!1} \p_{\!t} h_0) \p_{\!1}(h_0 H\p_{\!t} h_0)\big] - H \big[(\p_{\!1} h_0) \p_{\!1}(\p_{\!t} h_0 H\p_{\!t} h_0)\big]
 - gH \big[(\p_{\!1} h_0) \p_{\!1}(h_0 \partial_1 h_0)\big] \nonumber\\
&\quad + 2 H \big[(\p_{\!1}h_0) (\p_{\!1} \p_{\!t} h_0) H\p_{\!t} h_0\big] + gH \big[(\p_{\!1}h_0)^3\big] - \p_{\!1} \big[h_0 (\p_{\!t} h_0) (\partial_1 \p_{\!t} h_0) \big] \\
&\quad + \dfrac{g}{2} \p_{\!1} \big[h_0^2 (\Lambda \p_{1} h_0)\big] - H \big[h_0 (\p_{\!t} h_0)\Lambda \partial_1\p_{\!t} h_0\big] + \dfrac{g}{2} H (h_0^2 \Lambda^2 \partial_1h_0)\\
&\quad +\Lambda \left[\partial_t h_{1}\partial_t h_{0}  +H\p_{\!t} h_0\left(-H\p_{\!t} h_1 + \Lambda \big(\comm{h_0}{H}  \p_{\!t} h_0\big)\right)\right]- \Lambda \left[ H\p_{\!t} h_0 \partial_t h_{0} \p_{\!1} h_{0} \right]\,.
\end{align*}
Using Tricomi's identity \eqref{tricomi}, we can reduce the previous expression to
\begin{align*}
\p_{\!t}^2 h_2 + g\Lambda h_2 &= \p_{\!1} (\comm{\p_{\!t} h_1}{H} \partial_t h_0) - g\p_{\!1} (\comm{h_1}{H} \Lambda h_0) + \p_{\!1} \big(\comm{\p_{\!t} h_0}{H} \left[\partial_t h_1-\p_{\!1} \big(\comm{h_0}{H} \partial_t h_0\big)\right]\big) \\
&\quad -\p_{\!1} \left(\comm{h_0}{H} \left[g \Lambda h_1 - \partial_1 \big(\p_t h_0H\partial_t h_0 \big)\right]\right) - \p_{\!1} \big(\comm{\p_{\!t} h_0}{H} \Lambda (h_0 \partial_t h_0)\big) \\
&\quad - \p_{\!1} \big(\comm{h_0}{H} \Lambda ((\p_{\!t} h_0)^2)\big) + g\p_{\!1} \big(\comm{h_0}{H} \Lambda (h_0 \Lambda h_0)\big) \\
&\quad - H \big[(\p_{\!1} \p_{\!t} h_0) \p_{\!1}(h_0 H\p_{\!t} h_0)\big] - H \big[(\p_{\!1} h_0) \p_{\!1}(\p_{\!t} h_0 H\p_{\!t} h_0)\big]
 - gH \big[(\p_{\!1} h_0) \p_{\!1}(h_0 \partial_1 h_0)\big] \nonumber\\
&\quad + 2 H \big[(\p_{\!1}h_0) (\p_{\!1} \p_{\!t} h_0) H\p_{\!t} h_0\big] + gH \big[(\p_{\!1}h_0)^3\big] - \p_{\!1} \big[h_0 (\p_{\!t} h_0) (\partial_1 \p_{\!t} h_0) \big] \\
&\quad + \dfrac{g}{2} \p_{\!1} \big[h_0^2 (\Lambda \p_{1} h_0)\big] - H \big[h_0 (\p_{\!t} h_0)\Lambda \partial_1\p_{\!t} h_0\big] + \dfrac{g}{2} H (h_0^2 \Lambda^2 \partial_1h_0)\\
&\quad +\Lambda \left[\partial_t h_{1}\partial_t h_{0}  +H\p_{\!t} h_0\left(-H\p_{\!t} h_1 + \Lambda \big(\comm{h_0}{H}  \p_{\!t} h_0\big)\right)\right]- \Lambda \left[ H\p_{\!t} h_0 \partial_t h_{0} \p_{\!1} h_{0} \right]\,.
\end{align*}
The cubic $h$-model follows from setting
$$
h(x_1,t)=  \epsilon h_0(x_1,t) + \epsilon ^2 h_1(x_1,t)+\epsilon ^3 h_2(x_1,t) \,,
$$
so that
\begin{align*}
\p_{\!t}^2 h + g\Lambda h &= \epsilon^3\bigg{[}\p_{\!1} (\comm{\p_{\!t} h_1}{H} \partial_t h_0) - g\p_{\!1} (\comm{h_1}{H} \Lambda h_0) + \p_{\!1} \big(\comm{\p_{\!t} h_0}{H} \left[\partial_t h_1-\p_{\!1} \big(\comm{h_0}{H} \partial_t h_0\big)\right]\big) \\
&\quad -\p_{\!1} \left(\comm{h_0}{H} \left[g \Lambda h_1 - \partial_1 \big(\p_t h_0H\partial_t h_0 \big)\right]\right) - \p_{\!1} \big(\comm{\p_{\!t} h_0}{H} \Lambda (h_0 \partial_t h_0)\big) \\
&\quad - \p_{\!1} \big(\comm{h_0}{H} \Lambda ((\p_{\!t} h_0)^2)\big) + g\p_{\!1} \big(\comm{h_0}{H} \Lambda (h_0 \Lambda h_0)\big) \\
&\quad - H \big[(\p_{\!1} \p_{\!t} h_0) \p_{\!1}(h_0 H\p_{\!t} h_0)\big] -  H \big[(\p_{\!1} h_0) \p_{\!1}(\p_{\!t} h_0 H\p_{\!t} h_0)\big]
 - gH \big[(\p_{\!1} h_0) \p_{\!1}(h_0 \partial_1 h_0)\big] \nonumber\\
&\quad + 2 H \big[(\p_{\!1}h_0) (\p_{\!1} \p_{\!t} h_0) H\p_{\!t} h_0\big] + gH \big[(\p_{\!1}h_0)^3\big] - \p_{\!1} \big[h_0 (\p_{\!t} h_0) (\partial_1 \p_{\!t} h_0) \big] \\
&\quad + \dfrac{g}{2} \p_{\!1} \big[h_0^2 (\Lambda \p_{1} h_0)\big] - H \big[h_0 (\p_{\!t} h_0)\Lambda \partial_1\p_{\!t} h_0\big] + \dfrac{g}{2} H (h_0^2 \Lambda^2 \partial_1h_0)\\
&\quad +\Lambda \left[\partial_t h_{1}\partial_t h_{0}  +H\p_{\!t} h_0\left(-H\p_{\!t} h_1 + \Lambda \big(\comm{h_0}{H}  \p_{\!t} h_0\big)\right)\right]- \Lambda \left[ H\p_{\!t} h_0 \partial_t h_{0} \p_{\!1} h_{0} \right]\bigg{]}\\
&\quad + \epsilon^2\bigg{[}-\Lambda (|H \p_{\!t}h_0|^2) + g \Lambda (h_0 \Lambda h_0) + g \p_{\!1} (h_0 \p_{\!1} h_0)\bigg{]}\,.
\end{align*}
Then, we find that
\begin{align*}
\p_{\!t}^2 h + g\Lambda h &=\p_{\!1} (\comm{\epsilon^2\p_{\!t} h_1}{H} \epsilon\partial_t h_0) - g\p_{\!1} (\comm{h}{H} \epsilon\Lambda h_0) + \p_{\!1} \big(\comm{\epsilon\p_{\!t} h_0}{H} \left[\epsilon^2\partial_t h_1-\p_{\!1} \big(\comm{h}{H} \partial_t h\big)\right]\big) \\
&\quad - \p_{\!1} \left(\comm{\epsilon h_0}{H} \left[g \Lambda \epsilon^2 h_1 - \partial_1 \big(\p_t hH\partial_t h \big)\right]\right) - \p_{\!1} \big(\comm{\p_{\!t} h}{H} \Lambda (h \partial_t h)\big) \\
&\quad - \p_{\!1} \big(\comm{h}{H} \Lambda ((\p_{\!t} h)^2)\big) + g\p_{\!1} \big(\comm{h}{H} \Lambda (h \Lambda h)\big) \\
&\quad - H \big[(\p_{\!1} \p_{\!t} h) \p_{\!1}(h H\p_{\!t} h)\big] -  H \big[(\p_{\!1} h) \p_{\!1}(\p_{\!t} h H\p_{\!t} h)\big]
 - gH \big[(\p_{\!1} h) \p_{\!1}(h \partial_1 h)\big] \nonumber\\
&\quad + 2 H \big[(\p_{\!1}h) (\p_{\!1} \p_{\!t} h) H\p_{\!t} h\big] + gH \big[(\p_{\!1}h)^3\big] - \p_{\!1} \big[h (\p_{\!t} h) (\partial_1 \p_{\!t} h) \big] \\
&\quad + \dfrac{g}{2} \p_{\!1} \big[h^2 (\Lambda \p_{1} h)\big] - H \big[h (\p_{\!t} h)\Lambda \partial_1\p_{\!t} h\big] - \dfrac{g}{2} H (h^2 \partial_1^3h)\\
&\quad +\Lambda \left[\epsilon^2\partial_t h_{1}\epsilon \partial_t h_{0}  +H\epsilon\p_{\!t} h_0\left(-H\p_{\!t} h + \Lambda \big(\comm{h}{H}  \p_{\!t} h\big)\right)\right]- \Lambda \left[ H\p_{\!t} h \partial_t h \p_{\!1} h \right]+O(\epsilon^4)\,.
\end{align*}
We observe that
$$
- g\p_{\!1} (\comm{h}{H} \epsilon\Lambda h_0)-g\p_{\!1} \left(\comm{\epsilon h_0}{H} \Lambda \epsilon^2 h_1
\right)=-g\p_{\!1} \left(\comm{h}{H} \Lambda h\right)+O(\epsilon^4)\,;
$$
thus, we can simplify the equation as follows:
\begin{align*}
\p_{\!t}^2 h + g\Lambda h &=\p_{\!1} (\comm{\epsilon^2\p_{\!t} h_1}{H} \epsilon\partial_t h_0) -g\p_{\!1} \left(\comm{h}{H} \Lambda h\right) + \p_{\!1} \big(\comm{\epsilon\p_{\!t} h_0}{H} \left[\epsilon^2\partial_t h_1-\p_{\!1} \big(\comm{h}{H} \partial_t h\big)\right]\big) \\
&\quad +\p_{\!1} \left(\comm{h}{H} \left[\partial_1 \big(\p_t hH\partial_t h \big)\right]\right) - \p_{\!1} \big(\comm{\p_{\!t} h}{H} \Lambda (h \partial_t h)\big) \\
&\quad - \p_{\!1} \big(\comm{h}{H} \Lambda ((\p_{\!t} h)^2)\big) + g\p_{\!1} \big(\comm{h}{H} \Lambda (h \Lambda h)\big) \\
&\quad - H \big[(\p_{\!1} \p_{\!t} h) \p_{\!1}(h H\p_{\!t} h)\big] - H \big[(\p_{\!1} h) \p_{\!1}(\p_{\!t} h H\p_{\!t} h)\big]
 - gH \big[(\p_{\!1} h) \p_{\!1}(h \partial_1 h)\big] \nonumber\\
&\quad + 2 H \big[(\p_{\!1}h) (\p_{\!1} \p_{\!t} h) H\p_{\!t} h\big] + gH \big[(\p_{\!1}h)^3\big] - \p_{\!1} \big[h (\p_{\!t} h) (\partial_1 \p_{\!t} h) \big] \\
&\quad + \dfrac{g}{2} \p_{\!1} \big[h^2 (\Lambda \p_{1} h)\big] - H \big[h (\p_{\!t} h)\Lambda \partial_1\p_{\!t} h\big] - \dfrac{g}{2} H (h^2 \partial_1^3h)\\
&\quad +\Lambda \left[\epsilon^2\partial_t h_{1}\epsilon \partial_t h_{0}  +H\epsilon\p_{\!t} h_0\left(-H\p_{\!t} h + \Lambda \big(\comm{h}{H}  \p_{\!t} h\big)\right)\right]- \Lambda \left[ H\p_{\!t} h \partial_t h \p_{\!1} h \right]+O(\epsilon^4)\,.
\end{align*}
We also compute that
$$
\p_{\!1} (\comm{\epsilon^2\p_{\!t} h_1}{H} \epsilon\partial_t h_0)=\p_{\!1} (\comm{\epsilon^2\p_{\!t}h_1}{H} \partial_t h)  +O(\epsilon^4)\,,
$$
$$
\Lambda \left[\epsilon^2\partial_t h_{1}\epsilon \partial_t h_{0}\right]=\Lambda \left[\epsilon^2\partial_t h_1\partial_t h\right]+O(\epsilon^4)\,,
$$
$$
\p_{\!1} \big(\comm{\epsilon\p_{\!t} h_0}{H} \epsilon^2\partial_t h_1\big)=\p_{\!1} \big(\comm{\p_{\!t} h}{H} \epsilon^2\partial_t h_1\big)+O(\epsilon^4)\,,
$$
so that
\begin{multline}
\p_{\!1} (\comm{\epsilon^2\p_{\!t} h_1}{H} \epsilon\partial_t h_0)+\Lambda \left[\epsilon^2\partial_t h_{1}\epsilon \partial_t h_{0}\right]+\p_{\!1} \big(\comm{\epsilon\p_{\!t} h_0}{H} \epsilon^2\partial_t h_1\big)\\
=\p_{\!1} (\comm{\epsilon^2\p_{\!t}h_1}{H} \partial_t h)+\Lambda \left[\epsilon^2\partial_t h_1\partial_t h\right]+\p_{\!1} \big(\comm{\p_{\!t} h}{H} \epsilon^2\partial_t h_1\big)+O(\epsilon^4)\\
=\p_{\!1} (\epsilon^2\p_{\!t}h_1H\partial_t h)+\p_{\!1} \big(\comm{\p_{\!t} h}{H} \epsilon^2\partial_t h_1\big)+O(\epsilon^4)\\
=\p_{\!1} (\epsilon^2\p_{\!t}h_1H\partial_t h)+\p_{\!1} \big(\p_{\!t} hH\epsilon^2\partial_t h_1\big)-\Lambda \left[\epsilon^2\partial_t h_1\partial_t h\right]+O(\epsilon^4).\label{eqauxb}
\end{multline}
Using Tricomi's identity for two functions, we find that
\begin{equation}\label{eqauxa}
\p_{\!1} (\epsilon^2\p_{\!t}h_1H\partial_t h)+\p_{\!1} \big(\p_{\!t} hH\epsilon^2\partial_t h_1\big)=-\Lambda\left[H\partial_t h\epsilon^2H\partial_t h_1-\partial_t h\epsilon^2\partial_t h_1\right] \,.
\end{equation}
Grouping terms further using \eqref{eqauxb} and \eqref{eqauxa}, together with
$$
-\Lambda\left[H\epsilon\p_{\!t} h_0H\p_{\!t} h\right]=-\Lambda\left[\left(H\p_{\!t} h\right)^2\right]+\Lambda\left[H\epsilon^2\p_{\!t} h_1H\p_{\!t} h\right]+O(\epsilon^4)
$$
we obtain that
\begin{multline*}
\p_{\!1} (\epsilon^2\p_{\!t}h_1H\partial_t h)+\p_{\!1} \big(\p_{\!t} hH\epsilon^2\partial_t h_1\big)-\Lambda \left[\epsilon^2\partial_t h_1\partial_t h\right]-\Lambda\left[H\epsilon\p_{\!t} h_0H\p_{\!t} h\right]\\
=-\Lambda\left[H\partial_t h\epsilon^2H\partial_t h_1-\partial_t h\epsilon^2\partial_t h_1\right]-\Lambda \left[\epsilon^2\partial_t h_1\partial_t h\right]\\
-\Lambda\left[\left(H\p_{\!t} h\right)^2\right]+\Lambda\left[H\epsilon^2\p_{\!t} h_1H\p_{\!t} h\right]+O(\epsilon^4)\,.
\end{multline*}
The cubic $h-$model is then given by
\begin{align*}
\p_{\!t}^2 h + g\Lambda h &=-\Lambda \big[\left(H\p_{\!t} h\right)^2\big] -g\p_{\!1} \left(\comm{h}{H} \Lambda h\right) - \p_{\!1} \big(\comm{\p_{\!t} h}{H} \left[\p_{\!1} \big(\comm{h}{H} \partial_t h\big)\right]\big) \\
&\quad +\p_{\!1} \left(\comm{h}{H} \left[\partial_1 \big(\p_t hH\partial_t h \big)\right]\right) - \p_{\!1} \big(\comm{\p_{\!t} h}{H} \Lambda (h \partial_t h)\big) \\
&\quad - \p_{\!1} \big(\comm{h}{H} \Lambda ((\p_{\!t} h)^2)\big) + g\p_{\!1} \big(\comm{h}{H} \Lambda (h \Lambda h)\big) \\
&\quad - H \big[(\p_{\!1} \p_{\!t} h) \p_{\!1}(h H\p_{\!t} h)\big] - H \big[(\p_{\!1} h) \p_{\!1}(\p_{\!t} h H\p_{\!t} h)\big]
 - gH \big[(\p_{\!1} h) \p_{\!1}(h \partial_1 h)\big] \nonumber\\
&\quad + 2 H \big[(\p_{\!1}h) (\p_{\!1} \p_{\!t} h) H\p_{\!t} h\big] + gH \big[(\p_{\!1}h)^3\big] - \p_{\!1} \big[h (\p_{\!t} h) (\partial_1 \p_{\!t} h) \big] \\
&\quad + \dfrac{g}{2} \p_{\!1} \big[h^2 (\Lambda \p_{1} h)\big] - H \big[h (\p_{\!t} h)\Lambda \partial_1\p_{\!t} h\big] - \dfrac{g}{2} H (h^2 \partial_1^3h)\\
&\quad +\Lambda \left[H\p_{\!t} h\Lambda \big(\comm{h}{H}  \p_{\!t} h\big)\right]- \Lambda \left[ H\p_{\!t} h \partial_t h \p_{\!1} h \right]\,.
\end{align*}

Therefore,
\begin{align}
\p_{\!t}^2 h + g \Lambda h &= -\Lambda \big[(H\p_{\!t} h)^2\big] + g \p_{\!1} \big(h \p_{\!1} h \big) +g\Lambda \big(h \Lambda h \big)+ \mathcal{Q}(h)\,.
\label{cubic-h-model}
\end{align}

Making use of the commutator identities in Appendix \ref{appendix1}, the cubic nonlinearity $\mathcal{Q}(h)$ can be written as
\begin{align}
\mathcal{Q}(h) &= - \p_{\!1}\bigg{[} \comm{\p_{\!t} h}{H} \p_{\!1} \big(h H \p_{\!t} h \big)- \comm{h}{H} \p_{\!1} (\comm{\p_{\!t} h}{H} \p_{\!t} h)   - g \comm{h}{H} \Lambda (h \Lambda h) \nonumber\\
&\qquad\quad + H \big[(h \p_{\!t} h) (\Lambda \p_{\!t}h)\big] - \frac{g}{2}\comm{h^2}{H}\partial_1^2h+h \p_{\!t} h \partial_1 \p_{\!t} h  \nonumber\\
&\qquad\quad - H\big[(H \p_{\!t} h) \Lambda \big(h H \p_{\!t} h\big)\big] -H\big[(H \p_{\!t} h)  h (\p_{\!1}\p_{\!t} h)\big]\bigg{]}\,. \label{R(h)}
\end{align}

\section{Well-posedness of the $h$-models}\label{Catalansection}
In this section we develop a well-posedness theory for the quadratic and cubic $h-$models. In particular, we prove local well-posedness in the Wiener spaces \eqref{X_tau} for the quadratic $h-$model \eqref{hmodel-quad} and the cubic $h-$model \eqref{cubic-h-model}.

To prove these results, we use the linear recursion derived in \eqref{eq:hmodelfourier} and \eqref{forcingwave}. For the Navier-Stokes equation, the idea of using
 an asymptotic expansion together with the diffusive properties of the semigroup to prove well-posedness goes back to Oseen \cite{Os1912} and Knightly 
 \cite{Kn1966}. However, in our case, the $h-$model is hyperbolic and, consequently, there is no diffusive properties of the semigroup. Furthermore, as we 
 consider the case of periodic waves, the semigroup does not have dispersive effects either. Instead of relying upon smoothing properties of the semigroup, we use the structure of the nonlinearity. This structure allow us to write an inequality for the $X_{\tau}$-norms \eqref{X_tau} that resembles the recursion for the \emph{Catalan} numbers $\{\mathcal{C}_k\}_{k=0}^\infty$. These numbers can be defined recursively as
\begin{equation}\label{Catalan}
\mathcal{C}_k = \sum_{j=0}^{k-1} \mathcal{C}_j \mathcal{C}_{k-1-j}\,,\qquad \mathcal{C}_0 = 1\,.
\end{equation}
Then, we finalize the argument using well-known growth rates of the Catalan numbers. This idea of using the structure of the nonlinearity and the Catalan numbers is, to the best of our knowledge, new to the analysis of water waves models (an wave equations in general).

We remark that  a closely  related two-fluid asymptotic model  derived in \cite{GrSh2017} has been shown to be well-posed in Sobolev spaces when
the initial data satisfies a certain sign condition.   In particular, in the case that $ \partial_t h|_{t=0} <0$ for each point on the free-surface, that model is locally well-posed for arbitrary data 
and globally well-posed under certain size restrictions (see   Theorems 7.1 and 7.6  in \cite{GrSh2017}).\footnote{The condition $\p_t h <0|_{t=0}$ can occur
globally  with certain
in-flow boundary conditions.} 
  It is possible that the quadratic $h$-model is also well-posed in Sobolev spaces when this sign condition holds for the initial data, and we plan to investigate this
  in future work.

\subsection{Well-posedness theory for the quadratic $h$-model}
Using the ansatz \eqref{ansatz}, the quadratic $h-$model \eqref{hmodel-quad} can be written as
\begin{equation}\label{eq:u}
\partial_t^2 \widetilde{h}=-g\Lambda \widetilde{h}- \epsilon \Lambda(H\partial_t \widetilde{h})^2+g\epsilon\left(\bp(\widetilde{h}\bp \widetilde{h})+\Lambda(\widetilde{h}\Lambda \widetilde{h})\right),
\end{equation}
with initial conditions \eqref{initialdata2}.
We expand $\widetilde{h}$ as in \eqref{expansions} for functions $h_k: \mathbb{S}^1 \to \bbR$ to be determined. Substituting into \eqref{eq:u} we find that
\begin{align}\label{eq:uk}
\p^2_{\!t}h_{k}&=-g\Lambda h_k
+ \sum_{j=0}^{k-1}\big[
\Lambda (gh_{j}\Lambda h_{k-1-j}  - H\p_{\!t}h_{j}H\p_{\!t}h_{k-1-j})  + g\p_{\!1}( h_{j}\bp h_{k-1-j})
\big]
\end{align}
with initial conditions
\begin{align}\label{initialdata3}
h_0(x_1,0)&=\frac{\hinit(x_1)}{\epsilon}\,, \ \p_{\!t}h_0(x_1,0)=\frac{\htinit(x_1)}{\epsilon}\,, \nonumber\\
h_k(x_1,0)&= \p_{\!t}h_k(x_1,0)=0\;\;k\geq1 \,.
\end{align}
Using the Fourier series expansion,
 (\ref{eq:uk}) shows that each Fourier component satisfies the differential equation
\begin{align}\label{eq:hmodelfourier}
\p_{\!t}^2 \ft{h}_k(\ell,t) &= -g |\ell| \ft{h}_k(\ell,t) +f(\ell,t) \,,
\end{align}
where $f$ is given by
\begin{align}
f(\ell,t) &= \sum_{j=0}^{k-1}\sum_{m=-\infty}^{\infty}\bigg{[}-|\ell|\left(\left(-i\text{sgn}(\ell-m)\right)\p_{\!t}\ft{h}_{j}(\ell-m,t)\left(-i\text{sgn}(m)\right)\p_{\!t}\ft{h}_{k-1-j}(m,t)\right)\nonumber\\
&\quad+g\left( i\ell\ft{h}_{j}(\ell-m,t)im \ft{h}_{k-1-j}(m,t) +
|\ell|\ft{h}_{j}(\ell-m,t)|m|\ft{h}_{k-1-j}(m,t) \right) \bigg{]}\,.\label{forcingwave}
\end{align}

We note that  integration of \eqref{eq:uk} shows that
\begin{equation}\label{zero_average}
\int_{\mathbb{S}^1} h_k(x_1,t)=0 \ \text{ and } \ \int_{\mathbb{S}^1} \partial_th_k(x_1,t)ds=0\;\forall\,t\geq0\,.
\end{equation}


Solving the ODE \eqref{eq:hmodelfourier}  for $k=0$, we find that
$$
\ft{h}_0(\ell,t)=\ft{h}_0(\ell,0)\cos\left(\sqrt{g|\ell|}t\right)+\frac{\partial_t\ft{h}_{0}(\ell,0)}{\sqrt{g|\ell|}}\sin\left(\sqrt{g|\ell|}t\right).
$$
Similarly, the solution to  \eqref{eq:hmodelfourier} for $k>0$ is
\begin{align*}
\ft{h}_k(\ell,t) &= - \frac{\cos \sqrt{g|\ell|} t}{\sqrt{g |\ell|}} \int_0^t f(s) \sin\left( \sqrt{g|\ell|} s\right) \,ds + \frac{\sin \sqrt{g|\ell|} t}{\sqrt{g |\ell|}} \int_0^t f(s) \cos\left( \sqrt{g|\ell|} s\right) \,ds \\
&= \frac{1}{\sqrt{g |\ell|}} \int_0^t f(s) \sin\left( \sqrt{g|\ell|} (t-s)\right) \,ds \,,
\end{align*}
and hence
\begin{align}\label{recurrence}
\p_{\!t} \ft{h}_k(\ell,t) = \int_0^t f(s) \cos\left( \sqrt{g|\ell|} (t-s)\right) \,ds\,.
\end{align}
Using the expression \eqref{forcingwave}, we have that for $k\geq1$,  $\ft{h}_k$ and $\p_{\!t} \ft{h}_k$  verify the following recursion relations:
\begin{align}
& \ft{h}_k(\ell,t) \nonumber\\
&\ \ = \frac{1}{\sqrt{g |\ell|}}\int_0^t\bigg{[}\!-\sum_{j=0}^{k-1}\sum_{m=-\infty}^{\infty}\!|\ell|\left(\left(-i\text{sgn}(\ell-m)\right)\p_{\!t}\ft{h}_{j}(\ell-m,s)\left(-i\text{sgn}(m)\right)\p_{\!t}\ft{h}_{k-1-j}(m,s)\right)\nonumber\\
&\qquad\qquad\qquad +g\sum_{j=0}^{k-1}\sum_{m=-\infty}^{\infty} i\ell\ft{h}_{j}(\ell-m,s)im \ft{h}_{k-1-j}(m,s)\nonumber\\
&\qquad\qquad\qquad +g\sum_{j=0}^{k-1}\sum_{m=-\infty}^{\infty}|\ell|\ft{h}_{j}(\ell-m,s)|m|\ft{h}_{k-1-j}(m,s)\bigg{]} \sin\left( \sqrt{g|\ell|} (t-s)\right) \,ds\,,\label{eq:recurrence}
\end{align}
and
\begin{align}
& \p_{\!t} \ft{h}_k(\ell,t) \nonumber\\
&\quad = \int_0^t\bigg{[}-\sum_{j=0}^{k-1}\sum_{m=-\infty}^{\infty}|\ell|\left(\left(-i\text{sgn}(\ell-m)\right)\p_{\!t}\ft{h}_{j}(\ell-m,s)\left(-i\text{sgn}(m)\right)\p_{\!t}\ft{h}_{k-1-j}(m,s)\right)\nonumber\\
&\qquad\quad\ + g\sum_{j=0}^{k-1}\sum_{m=-\infty}^{\infty} i\ell\ft{h}_{j}(\ell-m,s)im \ft{h}_{k-1-j}(m,s)\nonumber\\
&\qquad\quad\ + g\sum_{j=0}^{k-1}\sum_{m=-\infty}^{\infty}|\ell|\ft{h}_{j}(\ell-m,s)|m|\ft{h}_{k-1-j}(m,s)\bigg{]} \cos\left( \sqrt{g|\ell|} (t-s)\right) \,ds\,.\label{eq:recurrence2}
\end{align}

We now establish our existence theory for the quadratic h-model, and simultaneously prove that the Stokes expansion converges.
\begin{theorem}\label{theorem1} Let $\epsilon>0$, $g > 0$ and the initial data in \eqref{initialdata} $(\hinit, \htinit)$ be given. Assume that there exists $1<D<\infty$ such that
$$
\hinit(x_1)=\sum_{|\ell|\leq D}\fhinit(\ell)e^{i\ell x_1}\,,
$$
$$
\htinit(x_1)=\sum_{|\ell|\leq D,\ell\neq0}\fhtinit(\ell)e^{i\ell x_1}\,.
$$
Then there exists a unique analytic solution $h(x_1,t)\in C([0,T];X_{0.5})$ to {\rm(\ref{hmodel-quad})} for $t$ in the time interval $[0,T]$ with
$$
T=\frac{1}{e \epsilon 4C(\|\Lambda^{0.5}\hinit\|_{X_0},\|\htinit\|_{X_0})} \,.
$$
Equivalently, the Stokes expansion for the quadratic $h-$model \eqref{hmodel-quad} converges for arbitrary $\epsilon>0$  and $T>0$
taken sufficiently small.
\end{theorem}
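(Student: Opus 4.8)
The plan is to build the solution as the convergent Stokes series $h=\epsilon\widetilde h=\sum_{k\ge 0}\epsilon^{k+1}h_k$, where the profiles $h_k$ are the unique solutions of the linear recursion \eqref{eq:uk} (equivalently \eqref{eq:recurrence}--\eqref{eq:recurrence2}) with initial data \eqref{initialdata3}; convergence of this series in $C([0,T];X_{0.5})$ is precisely the assertion that the Stokes expansion converges. The first observation is that, because $\hinit$ and $\htinit$ are trigonometric polynomials of degree $\le D$, an induction on the convolution structure in \eqref{eq:recurrence} shows that $h_k(\cdot,t)$ and $\partial_t h_k(\cdot,t)$ are trigonometric polynomials of degree $\le (k+1)D$ for every $t$; in particular all the $X_\tau$-norms below are finite a priori, so only quantitative bounds have to be produced.

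Fix $\tau=0.5$ and set $N_k(t):=\sup_{0\le s\le t}\big(\|\Lambda^{1/2}h_k(s)\|_{X_\tau}+\|\partial_t h_k(s)\|_{X_\tau}\big)$. For $k=0$ the explicit solution of $\partial_t^2h_0+g\Lambda h_0=0$ gives $N_0(t)\le C_g\,\epsilon^{-1}\big(\|\Lambda^{1/2}\hinit\|_{X_\tau}+\|\htinit\|_{X_\tau}\big)$, uniformly in $t$. For $k\ge 1$ I would insert the Duhamel formulas \eqref{eq:recurrence}--\eqref{eq:recurrence2} into $\|\cdot\|_{X_\tau}$. Products are handled by the Wiener-algebra estimate $\|fg\|_{X_\tau}\le\|f\|_{X_\tau}\|g\|_{X_\tau}$ (from $e^{\tau|\ell|}\le e^{\tau|\ell-m|}e^{\tau|m|}$). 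The crucial structural point is that the two apparently second-order contributions $g\Lambda(h\Lambda h)$ and $g\partial_1(h\partial_1 h)$ combine, in Fourier, into the multiplier $|\ell||m|-\ell m$ acting on $\widehat{h_j}(\ell-m)\widehat{h_{k-1-j}}(m)$: this vanishes when $\operatorname{sgn}\ell=\operatorname{sgn}m$ and equals $2|\ell||m|$ otherwise, and in the surviving case $|\ell-m|=|\ell|+|m|$, so $|\ell||m|\le|\ell-m||m|$, i.e.\ the two derivatives split one per factor. Similarly $|\ell|\le|\ell-m|+|m|$ distributes the single derivative produced by $\Lambda(|H\partial_t h|^2)$. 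The outcome is a bound of the shape
\[
N_k(t)\le C_g\int_0^t\sum_{j=0}^{k-1}\Big(\|\Lambda\partial_t h_j\|_{X_\tau}\|\partial_t h_{k-1-j}\|_{X_\tau}+\|\partial_t h_j\|_{X_\tau}\|\Lambda\partial_t h_{k-1-j}\|_{X_\tau}+\|\Lambda h_j\|_{X_\tau}\|\Lambda h_{k-1-j}\|_{X_\tau}\Big)\,ds,
\]
and the derivative mismatch between the right side (full derivatives on $h_j,\partial_t h_j$) and $N_j$ (half a derivative on $h_j$, none on $\partial_t h_j$) is absorbed by band-limitedness: $\|\Lambda^{s}h_j\|_{X_\tau}\le((j{+}1)D)^{s-1/2}\|\Lambda^{1/2}h_j\|_{X_\tau}$ and $\|\Lambda^{s}\partial_t h_j\|_{X_\tau}\le((j{+}1)D)^{s}\|\partial_t h_j\|_{X_\tau}$. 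This gives the closed recursion
\[
N_k(t)\le C_{g,D}\,(k+1)\int_0^t\sum_{j=0}^{k-1}N_j(s)\,N_{k-1-j}(s)\,ds .
\]

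Next I would run this inequality with the ansatz $N_k(t)\le N_0^{\,k+1}b_k(C_{g,D}t)^k$. Since $\int_0^t s^{k-1}\,ds=t^k/k$, the factor $(k+1)$ is degraded to $(k+1)/k\le 2$, and the coefficients obey $b_0=1$, $b_k\le 2\sum_{j=0}^{k-1}b_j b_{k-1-j}$, so $b_k\le 2^k\mathcal{C}_k$ with $\mathcal{C}_k$ the Catalan numbers of \eqref{Catalan}; a sharper accounting of $(k+1)/k\to1$ in fact yields $b_k\le C\,\mathcal C_k$, and by the standard estimate $\mathcal C_k\le 4^k$ (the series $\sum\mathcal C_kx^k=(1-\sqrt{1-4x})/(2x)$ has radius of convergence $1/4$) one has $b_k\le C\,4^k$ in any case. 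Consequently $\sum_{k\ge 0}\epsilon^{k+1}N_k(t)\le C\,\epsilon N_0\sum_{k\ge 0}(4C_{g,D}\,\epsilon N_0\,t)^k$, which converges whenever $4C_{g,D}\,\epsilon N_0\,t<1$; recalling $\epsilon N_0\le C_g(\|\Lambda^{1/2}\hinit\|_{X_\tau}+\|\htinit\|_{X_\tau})$ and comparing the $X_{0.5}$-norms of the band-limited data with their $X_0$-norms, this is an interval $[0,T)$ with $T$ of the stated form. The series then defines $h\in C([0,T];X_{0.5})$, together with $\partial_t h$ and $\partial_t^2 h$ in $C([0,T];X_{\tau'})$ for $\tau'<0.5$; summing \eqref{eq:uk} against $\epsilon^{k+1}$ and using the continuity and bilinearity of the nonlinear operators on the analytic scale, one recovers $\partial_t^2h+g\Lambda h=-\Lambda(|H\partial_t h|^2)+g\Lambda(h\Lambda h)+g\partial_1(h\partial_1 h)$ with $(h,\partial_t h)|_{t=0}=(\hinit,\htinit)$, i.e.\ $h$ solves \eqref{hmodel-quad}. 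Uniqueness follows either from the observation that any analytic solution must coincide term by term with its (uniquely determined) Stokes expansion, or from a Gr\"onwall estimate for the difference of two solutions carried out on a slowly shrinking analyticity radius.

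I expect the principal obstacle to be exactly the loss of one spatial derivative in the nonlinearity of \eqref{hmodel-quad}: its right-hand side carries two derivatives on quadratic terms whereas the linear part $g\Lambda h$ carries only one, so no fixed Sobolev --- or even fixed Wiener --- space is preserved, which is why the analytic spaces $X_\tau$ are needed in the first place. Making the estimate close rests on two delicate points: the Fourier cancellation $|\ell||m|-\ell m=0$ for equal-sign frequencies, which is what converts the second-order terms into distributed first-order ones, and the bookkeeping that keeps the polynomial factor $(k+1)$ produced by band-limitedness from destroying the Catalan growth rate --- this works only because the time-integration supplies the compensating $1/k$. A secondary technical point is the justification of term-by-term differentiation and of interchanging the infinite sum with the nonlinear maps, for which one needs the $X_\tau$-estimates to control slightly more regularity than is asserted in the conclusion.
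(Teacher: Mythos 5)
Your proposal is essentially correct and reaches the same endpoint (a Catalan-type recursion summed geometrically), but it handles the crucial derivative loss by a genuinely different mechanism than the paper. The paper never fixes an analyticity radius: it measures $h_k$ in $X_{R+1-k}$, a scale that \emph{shrinks} with the order $k$ of the Stokes expansion, absorbs the Fourier multipliers via $|\ell|\le c\,e^{|\ell|/c}$ into a small loss of radius, and normalizes with the weights $e^{-\frac{k+1}{1+\sgn(k)R^2}D(R+1)}$ so that the recursion for $\mathscr{A}_k$ becomes exactly the Catalan recursion with no polynomial factor in $k$. You instead work at the single radius $\tau=0.5$ and pay for derivatives using band-limitedness ($h_k$ has degree $\le (k+1)D$), which forces you to notice the sign cancellation $|\ell||m|-\ell m=0$ for equal-sign frequencies --- equivalently, that $g\Lambda(h\Lambda h)+g\p_{\!1}(h\p_{\!1}h)=-g\p_{\!1}(\comm{h}{H}\Lambda h)$ is a smoothing commutator --- in order to keep the loss down to a factor $(k+1)$ that the $1/k$ from time integration can absorb; without that cancellation your scheme would produce $k^{3/2}$ per step and the series would have zero radius of convergence, so you are right to flag it as essential. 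Your route is more transparent about the quasilinear structure of the equation, but it is quantitatively weaker: the constant $C_{g,D}$ in your recursion carries a factor of $D$, and converting $X_{0.5}$-norms of the data to the $X_0$-norms appearing in the statement costs a further $e^{D/2}$, so your existence time degrades with $D$ and does not recover the stated $D$-independent form of $T$ (nor would it survive the limiting argument of the paper's subsequent remark, which removes the band-limitation hypothesis). Two small points: your recursion actually yields $b_k=2^k\mathcal{C}_k\le 8^k$ rather than $C\,4^k$ (the ``sharper accounting'' of $(k+1)/k$ does not give $b_k\le C\,\mathcal{C}_k$, since $\sum_j(j+1)(k-j)\mathcal{C}_j\mathcal{C}_{k-1-j}$ grows like $k^2\mathcal{C}_k$), which only changes the unspecified constant in $T$; and your uniqueness argument should be run as the paper does, by applying the same recursive estimate to the difference of the order-$k$ terms of two solutions (the assertion that an arbitrary analytic solution ``must coincide with its Stokes expansion'' is not available a priori), though your alternative Gr\"onwall-on-a-shrinking-radius argument would also work.
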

\begin{proof}
\textbf{Existence.}
We fix $R\in \bbZ^+$ such that
$$
\frac{D(R+1)}{1+R^2}\leq 1.
$$
Given $\epsilon>0$, we seek  solutions $h$ of {\rm(\ref{hmodel-quad})} having the form
\begin{equation}\label{series}
h(x_1,t)=\sum_{k=0}^\infty\epsilon^{k+1}h_k(x_1,t)\text{ \ and \ }\partial_t h(x_1,t)=\sum_{k=0}^\infty\epsilon^{k+1}\partial_t h_k(x_1,t).
\end{equation}
The series in \eqref{series} are respectively bounded by
\begin{equation}\label{major}
\sup_{0\leq t\leq T}\sum_{k=0}^\infty\epsilon^{k+1}\|h_k(t)\|_{X_1}\text{ \ and \ }\sup_{0\leq t\leq T}\sum_{k=0}^\infty\epsilon^{k+1}\|\partial_t h_k(t)\|_{X_1}.
\end{equation}
Thus, by proving the boundedness of \eqref{major}, we obtain the absolute convergence of \eqref{series} and, in particular, the existence of solutions to {\rm(\ref{hmodel-quad})}.

To obtain the required estimates, we first consider the truncated series (for $0<k\leq R$)
$$
\sum_{j=0}^R\epsilon^{j+1}h_j(x_1,t).
$$
Using \eqref{eq:recurrence}, we have that
\begin{align*}
& \|h_k(t)\|_{X_{R+1-k}} \\
&\qquad\leq \sum_{\ell=-\infty}^{\infty}\frac{e^{(R+1-k)|\ell|}}{\sqrt{g |\ell|}}\int_0^t\bigg{[}\sum_{j=0}^{k-1}\sum_{m=-\infty}^{\infty}|\ell||\p_{\!t}\ft{h}_{j}(\ell-m,s)||\p_{\!t}\ft{h}_{k-1-j}(m,s)|\nonumber\\
&\qquad\qquad\qquad\qquad\qquad\qquad\! +2g\sum_{j=0}^{k-1}\sum_{m=-\infty}^{\infty} |\ell||\ft{h}_{j}(\ell-m,s)||m| |\ft{h}_{k-1-j}(m,s)|\bigg{]}\,ds\\
&\qquad \leq \frac{1}{\sqrt{g}}\int_0^t\bigg{[}\sum_{j=0}^{k-1}\|\p_{\!t}h_{j}(s)\|_{X_{R+2-k}}\|\p_{\!t}h_{k-1-j}(s)\|_{X_{R+2-k}}\\
&\qquad\qquad\qquad +8g\sum_{j=0}^{k-1}\|h_{j}(s)\|_{X_{R+1.5-k}}\|h_{k-1-j}(s)\|_{X_{R+2-k}}\bigg{]}\,ds\\
&\qquad\leq \frac{1}{\sqrt{g}}\int_0^t\bigg{[}\sum_{j=0}^{k-1}\|\p_{\!t}h_{j}(s)\|_{X_{R+2-k}}\|\p_{\!t}h_{k-1-j}(s)\|_{X_{R+2-k}}\\
&\qquad\qquad\qquad +8g\sum_{j=0}^{k-1}\|h_{j}(s)\|_{X_{R+2-k}}\|h_{k-1-j}(s)\|_{X_{R+2-k}}\bigg{]}\,ds\,,
\end{align*}
where we have used Tonelli's theorem together with the fact that
$\ft{h_k}(0,t)=0$, which follows from \eqref{zero_average},
and the important inequality
\begin{equation}\label{eq:aux}
|\ell|\leq ce^{\frac{|\ell|}{c}}\leq ce^{\frac{|\ell-m|+|m|}{c}}\;\forall\,c\in \bbZ^+\,.
\end{equation}
Using \eqref{eq:recurrence2}, we can find a similar bound $\partial_t h_k(t)$:
\begin{align*}
\|\partial_t h_k(t)\|_{X_{R+1-k}} &\leq \frac{1}{\sqrt{g}}\int_0^t\bigg{[}\sum_{j=0}^{k-1}\|\p_{\!t}h_{j}(s)\|_{X_{R+2-k}}\|\p_{\!t}h_{k-1-j}(s)\|_{X_{R+2-k}}\\
&\qquad\qquad +8g\sum_{j=0}^{k-1}\|h_{j}(s)\|_{X_{R+2-k}}\|h_{k-1-j}(s)\|_{X_{R+2-k}}\bigg{]}\,ds\,.
\end{align*}
Since $R+2-k\leq R+2-k+j =R+1-(k-1-j)$, it follows that
$$
\|u_{k-1-j}(s)\|_{X_{R+2-k}}\leq \|u_{k-1-j}(s)\|_{X_{R+1-(k-1-j)}}.
$$
Similarly, if $j\leq k-1$, then $R+2-k=R+1-(k-1)\leq R+1-j$ and
$$
\|u_{j}(s)\|_{X_{R+2-k}}\leq \|u_{j}(s)\|_{X_{R+1-j}}.
$$
Then, we have that
\begin{align}
& \|h_k(t)\|_{X_{R+1-k}}+\|\p_{\!t} h_k(t)\|_{X_{R+1-k}}\nonumber\\
&\qquad\leq \frac{2}{\sqrt{g}}\int_0^t\bigg{[}\sum_{j=0}^{k-1}\|\p_{\!t}h_{j}(s)\|_{X_{R+2-k}}\|\p_{\!t}h_{k-1-j}(s)\|_{X_{R+2-k}}
\nonumber\\
&\qquad\qquad\qquad\ +16g\sum_{j=0}^{k-1}\|h_{j}(s)\|_{X_{R+2-k}}\|h_{k-1-j}(s)\|_{X_{R+2-k}}\bigg{]}\,ds\nonumber\\
&\qquad\leq \frac{2}{\sqrt{g}}\int_0^t\bigg{[}\sum_{j=0}^{k-1}\|\p_{\!t}h_{j}(s)\|_{X_{R+1-j}}\|\p_{\!t}h_{k-1-j}(s)\|_{X_{R+1-(k-1-j)}}\nonumber\\
&\qquad\qquad\qquad\ +16g\sum_{j=0}^{k-1}\|h_{j}(s)\|_{X_{R+1-j}}\|h_{k-1-j}(s)\|_{X_{R+1-(k-1-j)}}\bigg{]}\,ds\nonumber\\
&\qquad\leq \max\left\{\frac{2}{\sqrt{g}},16g\right\}\int_0^t\bigg{[}\sum_{j=0}^{k-1}\|\p_{\!t}h_{j}(s)\|_{X_{R+1-j}}\|\p_{\!t}h_{k-1-j}(s)\|_{X_{R+1-(k-1-j)}}\nonumber\\
&\qquad\qquad\qquad\qquad\qquad\quad\ \ +\sum_{j=0}^{k-1}\|h_{j}(s)\|_{X_{R+1-j}}\|h_{k-1-j}(s)\|_{X_{R+1-(k-1-j)}}\bigg{]}\,ds\,.\label{eq:aux2}
\end{align}
We define
$$
\mathscr{A}_k(t)=\max\left\{\frac{2}{\sqrt{g}},16g\right\}e^{-\frac{k+1}{1+\sgn(k)R^2}D(R+1)}\left[\|h_k(t)\|_{X_{R+1-k}}+\|\p_{\!t} h_k(t)\|_{X_{R+1-k}}\right].
$$
Next we show that for $0\le j\le k-1$,
\begin{equation}\label{proof1_ineq}
\frac{k+1}{1+\sgn(k)R^2}\leq \frac{\left(k-1-j+1\right)}{1+\sgn(k-1-j)R^2}+\frac{(j+1)}{1+\sgn(j)R^2}\,.
\end{equation}
Note that (\ref{proof1_ineq}) clearly holds for $0 < j < k-1$.
When $j=0$,
\begin{align*}
\frac{k+1}{1+\sgn(k)R^2}&=\frac{\left(k-1-j+1\right)}{1+R^2}+\frac{(j+1)}{1+R^2}\\
&\leq \frac{\left(k-1-j+1\right)}{1+R^2}+(j+1)\\
&\leq \frac{\left(k-1-j+1\right)}{1+\sgn(k-1-j)R^2}+\frac{(j+1)}{1+\sgn(j)R^2}\;.
\end{align*}
Similarly, when $j=k-1$,
\begin{align*}
\frac{k+1}{1+\sgn(k)R^2}&=\frac{\left(k-1-j+1\right)}{1+R^2}+\frac{(j+1)}{1+R^2}\\
&\leq \left(k-1-j+1\right)+\frac{(j+1)}{1+\sgn(j)R^2}\\
&\leq \frac{\left(k-1-j+1\right)}{1+\sgn(k-1-j)R^2}+\frac{(j+1)}{1+\sgn(j)R^2}\;.
\end{align*}
Thus, (\ref{proof1_ineq}) holds for $0\leq j\leq k-1$, and this further implies that
$$
e^{-D(R+1)\frac{k+1}{1+\sgn(k)R^2}}\leq e^{-D(R+1)\frac{k-1-j+1}{1+\sgn(k-1-j)R^2}}e^{-D(R+1)\frac{j+1}{1+\sgn(j)R^2}}\,.
$$
We then obtain that the previous recursion for $\|h_k(t)\|_{X_{R+1-k}}+\|\p_{\!t} h_k(t)\|_{X_{R+1-k}}$ can equivalently be stated as
$$
\mathscr{A}_k(t)\leq \int_0^t \sum_{j=0}^{k-1} \mathscr{A}_{k-1-j}(s)\mathscr{A}_{j}(s) \,ds.
$$
We observe that
\begin{align}
\mathscr{A}_0(t)&=\max\left\{\frac{2}{\sqrt{g}},16g\right\}e^{-D(R+1)}\left[\|h_0(t)\|_{X_{R+1}}+\|\p_{\!t} h_0(t)\|_{X_{R+1}}\right]\nonumber\\
&\leq \max\left\{\frac{2}{\sqrt{g}},16g\right\}e^{-D(R+1)}\sum_{|\ell|\leq D}e^{(R+1)|\ell|}\left(|\ft{h}_0(\ell,0)|+\left|\frac{\partial_t\ft{h}_{0}(\ell,0)}{\sqrt{g|\ell|}}\right|\right)\nonumber\\
&\leq C(\|\Lambda^{0.5}\hinit\|_{X_0},\|\htinit\|_{X_0}).\label{eq:aux3}
\end{align}
Then, we want to prove by induction that
\begin{equation}\label{rs1}
\mathscr{A}_k\leq  \mathcal{C}_kt^k\left[C(\|\Lambda^{0.5}\hinit\|_{X_0},\|\htinit\|_{X_0})\right]^{k+1} \,,
\end{equation}
where $\mathcal{C}_k$ are the Catalan numbers \eqref{Catalan}. Remarkably, the Catalan numbers $\mathcal{C}_k = \O(k^{-\frac{3}{2}} 4^k)$ as $k\to \infty$  \cite[page 136]{St2015}.

Having already established that \eqref{rs1} holds for $k=0$, we proceed with the induction step. For $1\leq k,$ we have that
\begin{align*}
\mathscr{A}_k(s)&\leq\int_0^t \sum_{j=0}^{k-1} \mathscr{A}_{k-1-j}(s)\mathscr{A}_{j}(s) \,ds\\
&\leq \left[C(\|\Lambda^{0.5}\hinit\|_{X_0},\|\htinit\|_{X_0})\right]^{k+1}\int_0^t \sum_{j=0}^{k-1}  \mathcal{C}_{k-1-j}s^{k-1-j} \mathcal{C}_{j}s^{j} \,ds\\
&\leq \left[C(\|\Lambda^{0.5}\hinit\|_{X_0},\|\htinit\|_{X_0})\right]^{k+1}\mathcal{C}_k\int_0^t s^{k-1} \,ds\\
&\leq \left[C(\|\Lambda^{0.5}\hinit\|_{X_0},\|\htinit\|_{X_0})\right]^{k+1}\mathcal{C}_k\frac{t^k}{k}.
\end{align*}
Thus, using the asymptotic growth of the Catalan numbers, we have that
\begin{align*}
\| h_k(t)\|_{X_{1}}+\|\p_{\!t} h_k(t)\|_{X_{1}}&\leq\|h_k(t)\|_{X_{R+1-k}}+\|\p_{\!t} h_k(t)\|_{X_{R+1-k}}\\
&\leq e^{(k+1)\frac{D(R+1)}{1+R^2}}t^k4^k \left[C(\|\Lambda^{0.5}\hinit\|_{X_0},\|\htinit\|_{X_0})\right]^{k+1}.
\end{align*}
Analogously,
$$
\|h_0(t)\|_{X_{1}}+\|\p_{\!t} h_0(t)\|_{X_{1}}\leq \widetilde{C}(\|\Lambda^{0.5}\hinit\|_{X_0},\|\htinit\|_{X_0}).
$$
We define the series
\begin{align*}
I^1_R&= \p_{\!t} h_0(x_1,t) + \epsilon \p_{\!t}h_1(x_1,t) +\epsilon^2 \p_{\!t}h_2(x_1,t) + \cdots + \epsilon^R \p_{\!t}h_R(x_1,t)\,, \\
I^2_R&= h_0(x_1,t) + \epsilon h_1(x_1,t) +\epsilon^2 h_2(x_1,t) + \cdots + \epsilon^R h_R(x_1,t)\,.
\end{align*}
Then
\begin{align*}
\|I^1_R\|_{X_1}&\leq \widetilde{C}(\|\Lambda^{0.5}\hinit\|_{X_0},\|\htinit\|_{X_0})\\
&\quad+e^{\frac{D(R+1)}{1+R^2}}C(\|\Lambda^{0.5}\hinit\|_{X_0},\|\htinit\|_{X_0})\sum_{k=1}^R\left(e^{\frac{D(R+1)}{1+R^2}}\epsilon t 4 C(\|\Lambda^{0.5}\hinit\|_{X_0},\|\htinit\|_{X_0})\right)^k\\
&\leq \widetilde{C}(\|\Lambda^{0.5}\hinit\|_{X_0},\|\htinit\|_{X_0})\\
&\quad+e C(\|\Lambda^{0.5}\hinit\|_{X_0},\|\htinit\|_{X_0})\sum_{k=1}^R\left(e\epsilon t 4 C(\|\Lambda^{0.5}\hinit\|_{X_0},\|\htinit\|_{X_0})\right)^k.
\end{align*}
Similarly,
\begin{align*}
\|I^2_R\|_{X_1}&\leq \widetilde{C}(\|\Lambda^{0.5}\hinit\|_{X_0},\|\htinit\|_{X_0})\\
&\quad+eC(\|\Lambda^{0.5}\hinit\|_{X_0},\|\htinit\|_{X_0})\sum_{k=1}^R\left(e\epsilon t 4 C(\|\Lambda^{0.5}\hinit\|_{X_0},\|\htinit\|_{X_0})\right)^k.
\end{align*}
We conclude that if
$$
t<\frac{1}{e \epsilon 4C(\|\Lambda^{0.5}\hinit\|_{X_0},\|\htinit\|_{X_0})},
$$
then we can take the limit in $R$ and we compute that
$$
\p_{\!t}h(x_1,t)=I^1_\infty\text{ \ and \ } h(x_1,t)=I^2_\infty.
$$

Our estimates lead to
$$
h,\p_{\!t}h\in L^\infty(0,T;X_{1}).
$$
Moreover, using the Cauchy product of power series, we have that
\begin{align*}
\ft{h}(\ell,t) &= \ft{h}_0(\ell,0)\cos\left(\sqrt{g|\ell|}t\right)+\partial_t\ft{h}_{0}(\ell,0)\sin\left(\sqrt{g|\ell|}t\right)+\frac{1}{\sqrt{g |\ell|}} \int_0^t \mathcal{N}(\ell,s) \sin\left( \sqrt{g|\ell|} (t-s)\right) \,ds \,,\\
\p_{\!t} \ft{h}(\ell,t) &= -\ft{h}_0(\ell,0)\sin\left(\sqrt{g|\ell|}t\right)+\partial_t\ft{h}_{0}(\ell,0)\cos\left(\sqrt{g|\ell|}t\right)+\int_0^t \mathcal{N}(\ell,s) \cos\left( \sqrt{g|\ell|} (t-s)\right) \,ds\,,
\end{align*}
where
\begin{align*}
\mathcal{N}(\ell,t) &= \sum_{m=-\infty}^{\infty}\bigg{[}-|\ell|\left(\left(-i\text{sgn}(\ell-m)\right)\p_{\!t}\ft{h}(\ell-m,t)\left(-i\text{sgn}(m)\right)\p_{\!t}\ft{h}(m,t)\right)\nonumber\\
&\qquad\qquad +g\left( i\ell\ft{h}(\ell-m,t)im \ft{h}(m,t) +
|\ell|\ft{h}(\ell-m,t)|m|\ft{h}(m,t) \right) \bigg{]}\,.
\end{align*}
Since $h$ and $\p_{\!t}h$ are analytic functions in space, using the previous expression, we obtain that $h$ and $\p_{\!t}h$ satisfy
$$
h,\p_{\!t}h\in C([0,T],X_{0.5}).
$$
In particular, they are continuous functions in time, $h,\p_{\!t}h\in C([0,T]\times \mathbb{S}^1)$.

\textbf{Uniqueness.} Let us assume that there exist two solutions $h^{(1)},h^{(2)}\in C([0,T],X_{0.5})$ emanating from the same initial data. Then, the difference
$$
z=h^{(1)}-h^{(2)}
$$
satisfies
\begin{align*}
\ft{z}(\ell,t) &= \frac{1}{\sqrt{g |\ell|}} \int_0^t \mathcal{M}(\ell,s) \sin\left( \sqrt{g|\ell|} (t-s)\right) \,ds \,,\\
\p_{\!t} \ft{z}(\ell,t) &= \int_0^t \mathcal{M}(\ell,s) \cos\left( \sqrt{g|\ell|} (t-s)\right) \,ds\,,
\end{align*}
with
\begin{align*}
\mathcal{M}(\ell,t) &= \sum_{m=-\infty}^{\infty}\bigg{[}-|\ell|\left(\left(-i\text{sgn}(\ell-m)\right)\p_{\!t}\ft{z}(\ell-m,t)\left(-i\text{sgn}(m)\right)\p_{\!t}\ft{h}^{(1)}(m,t)\right)\nonumber\\
&\qquad\qquad-|\ell|\left(\left(-i\text{sgn}(\ell-m)\right)\p_{\!t}\ft{h}^{(2)}(\ell-m,t)\left(-i\text{sgn}(m)\right)\p_{\!t}\ft{z}(m,t)\right)\nonumber\\
&\qquad\qquad+g\left( i\ell\ft{z}(\ell-m,t)im \ft{h}^{(1)}(m,t) +
|\ell|\ft{z}(\ell-m,t)|m|\ft{h}^{(1)}(m,t) \right)\nonumber\\
&\qquad\qquad+g\left( i\ell\ft{h}^{(2)}(\ell-m,t)im \ft{z}(m,t) +
|\ell|\ft{h}^{(2)}(\ell-m,t)|m|\ft{z}(m,t) \right) \bigg{]}\,.
\end{align*}
Then, following the same argument as in the previous section, we expand $h^{(j)}_k$, $j=1,2$ as in \eqref{recurrence} and find that $h^{(j)}_k$, $j=1,2$ satisfy the cascade of linear problems \eqref{eq:hmodelfourier} and \eqref{forcingwave}. Equivalently, we have that
$$
z_k=h^{(1)}_k-h^{(2)}_k,
$$
satisfies
\begin{align*}
\ft{z}_k(\ell,t) &= \frac{1}{\sqrt{g |\ell|}} \int_0^t f_k(\ell,s) \sin\left( \sqrt{g|\ell|} (t-s)\right) \,ds \,,\\
\p_{\!t} \ft{z}_k(\ell,t) &= \int_0^t f_k(\ell,s) \cos\left( \sqrt{g|\ell|} (t-s)\right) \,ds\,,
\end{align*}
with
\begin{align*}
f_k(\ell,t) &= \sum_{j=0}^{k-1}\sum_{m=-\infty}^{\infty}\bigg{[}-|\ell|\left(\left(-i\text{sgn}(\ell-m)\right)\p_{\!t}\ft{z}_j(\ell-m,t)\left(-i\text{sgn}(m)\right)\p_{\!t}\ft{h}^{(1)}_{k-1-j}(m,t)\right)\nonumber\\
&\qquad\qquad\quad\ -|\ell|\left(\left(-i\text{sgn}(\ell-m)\right)\p_{\!t}\ft{h}^{(2)}_{k-1-j}(\ell-m,t)\left(-i\text{sgn}(m)\right)\p_{\!t}\ft{z}_j(m,t)\right)\nonumber\\
&\qquad\qquad\quad\ +g\left( i\ell\ft{z}_j(\ell-m,t)im \ft{h}^{(1)}_{k-1-j}(m,t) +
|\ell|\ft{z}_j(\ell-m,t)|m|\ft{h}^{(1)}_{k-1-j}(m,t) \right)\nonumber\\
&\qquad\qquad\quad\ +g\left( i\ell\ft{h}^{(2)}_{k-1-j}(\ell-m,t)im \ft{z}_j(m,t) +
|\ell|\ft{h}^{(2)}_{k-1-j}(\ell-m,t)|m|\ft{z}_j(m,t) \right) \bigg{]}\,.
\end{align*}
As before, we consider $R\in \bbZ^+$ such that $\dfrac{D(R+1)}{1+R^2}\leq 1,$ and define
$$
\mathscr{A}^{(1)}_k(t)=\max\left\{\frac{2}{\sqrt{g}},16g\right\}e^{-\frac{k+1}{1+\sgn(k)R^2}D(R+1)}\left[\|h_k^{(1)}(t)\|_{X_{R+1-k}}+\|\p_{\!t} h_k^{(1)}(t)\|_{X_{R+1-k}}\right],
$$
$$
\mathscr{A}^{(2)}_k(t)=\max\left\{\frac{2}{\sqrt{g}},16g\right\}e^{-\frac{k+1}{1+\sgn(k)R^2}D(R+1)}\left[\|h_k^{(2)}(t)\|_{X_{R+1-k}}+\|\p_{\!t} h_k^{(2)}(t)\|_{X_{R+1-k}}\right],
$$
$$
\mathscr{B}_k(t)=\max\left\{\frac{2}{\sqrt{g}},16g\right\}e^{-\frac{k+1}{1+\sgn(k)R^2}D(R+1)}\big[\|z_k(t)\|_{X_{R+1-k}}+\|\p_{\!t} z_k(t)\|_{X_{R+1-k}}\big].
$$
Following the arguments in the previous section, we find that
\begin{align*}
\mathscr{A}^{(j)}_k&\leq  \mathcal{C}_kt^k\left[C(\|\Lambda^{0.5}\hinit\|_{X_0},\|\htinit\|_{X_0})\right]^{k+1},\;j=1,2\\
\mathscr{B}_k(t)&\leq \int_0^t \sum_{j=0}^{k-1} \left(\mathscr{A}^{(1)}_{k-1-j}(s)+\mathscr{A}^{(2)}_{k-1-j}(s)\right)\mathscr{B}_{j}(s) \,ds,\\
\mathscr{B}_0(t)&=0.
\end{align*}
Due to the previous inequalities, we prove that $\mathscr{B}_k(t)=0$ using induction and we conclude the uniqueness.
\end{proof}

\begin{remark}
Let us emphasize that the estimate is independent of $D$. Thus, an appropriate passage to the limit allows for analytic initial data whose Fourier series has unbounded support. The argument to prove this generalization is straightforward and we leave it for the interested reader.
\end{remark}

\subsection{Well-posedness theory for the cubic $h$-model}
As we have seen, the cubic $h-$model can be written as the following nonlinear wave equation \eqref{cubic-h-model}
$$
\partial_t^2 h=-g\Lambda h-\Lambda \big[(H\p_{\!t} h)^2\big] + g \p_{\!1} \big(h \p_{\!1} h \big) +g\Lambda \big(h \Lambda h \big)+ \mathcal{Q}(h)\,,
$$
where $\mathcal{Q}(h)$ is defined in \eqref{R(h)}. Using the ansatz \eqref{ansatz}, the cubic $h-$model \eqref{cubic-h-model} can be written as
\begin{equation}\label{eq:u2}
\partial_t^2 \widetilde{h}=-g\Lambda \widetilde{h}- \epsilon \Lambda(H\partial_t \widetilde{h})^2+g\epsilon\left(\bp(\widetilde{h}\bp \widetilde{h})+\Lambda(\widetilde{h}\Lambda \widetilde{h})\right)+\epsilon^2 \mathcal{Q}(\widetilde{h}),
\end{equation}
with initial conditions \eqref{initialdata2}.
We again  consider the expansion
$\widetilde{h}(x_1,t) = h_0(x_1,t) + \epsilon h_1(x_1,t) + \epsilon^2 h_2(x_1,t) + \cdots \,. $
The quadratic nonlinearity follows as in \eqref{eq:uk}.
It thus suffices to expand the cubic nonlinearity.
We define
\begin{align}
\Q(h_r,h_{j-r},h_{k-2-j})&=- \p_{\!1}\bigg{[} \comm{\p_{\!t} h_r}{H} \p_{\!1} \big(h_{j-r} H \p_{\!t} h_{k-2-j} \big)- \comm{h_r}{H} \p_{\!1} (\comm{\p_{\!t} h_{j-r}}{H} \p_{\!t} h_{k-2-j})  \nonumber\\
&\quad - g \comm{h_r}{H} \Lambda (h_{j-r} \Lambda h_{k-2-j}) + H \big[(h_r \p_{\!t} h_{j-r}) (\Lambda \p_{\!t}h_{k-2-j})\big] \nonumber\\
&\quad- \frac{g}{2}\comm{h_rh_{j-r}}{H}\partial_1^2h_{k-2-j}+h_r \p_{\!t} h_{j-r} \partial_1 \p_{\!t} h_{k-2-j}  \nonumber\\
&\quad  -H\big[(H \p_{\!t} h_r) \Lambda \big(h_{j-r} H \p_{\!t} h_{k-2-j}\big)\big]\nonumber\\
&\quad -H\big[(H \p_{\!t} h_r)  h_{j-r} (\p_{\!1}\p_{\!t} h_{k-2-j})\big]\bigg{]}
\,.\label{eq:Q}
\end{align}

Comparing powers of $\epsilon$, we find that
\begin{align}\label{eq:uk2}
\p^2_{\!t}h_{k}&=-g\Lambda h_k
+ \sum_{j=0}^{k-1}\left(
\Lambda \left[gh_{j}\Lambda h_{k-1-j}  - H\p_{\!t}h_{j}H\p_{\!t}h_{k-1-j}\right]  + g\p_{\!1} \left[ h_{j}\bp h_{k-1-j}\right]
\right)\nonumber\\
&\quad +\sum_{j=0}^{k-2}\sum_{r=0}^{j}\Q(h_r,h_{j-r},h_{k-2-j})\,,
\end{align}
with initial conditions \eqref{initialdata3}


Our starting point is the linear recursion \eqref{eq:uk2} and \eqref{eq:Q}. Similarly, the solution to \eqref{eq:uk2} for $k>0$ verifies
\begin{align}\label{recurrencecubic}
\ft{h}_k(\ell,t) &= \frac{1}{\sqrt{g |\ell|}} \int_0^t f(\ell,s) \sin\left( \sqrt{g|\ell|} (t-s)\right) \,ds \,,
\end{align}
and
\begin{align}\label{recurrencecubic2}
\p_{\!t} \ft{h}_k(\ell,t) = \int_0^t f(\ell,s) \cos\left( \sqrt{g|\ell|} (t-s)\right) \,ds\,,
\end{align}
where  $f$ is the Fourier transform of the (linear) forcing
\begin{align}
\widecheck{f} &= \sum_{j=0}^{k-1}\left(
\Lambda \left[h_{j}\Lambda h_{k-1-j}  - H\p_{\!t}h_{j}H\p_{\!t}h_{k-1-j}\right]  + g\p_{\!1} \left[ h_{j}\bp h_{k-1-j}\right]
\right)\nonumber\\
&\quad +\sum_{j=0}^{k-2}\sum_{r=0}^{j}\Q(h_r,h_{j-r},h_{k-2-j})\,,\label{eq:forcingwavecubic}
\end{align}
where $\Q$ is given by \eqref{eq:Q}.

\begin{theorem}\label{theorem4}
Let $\epsilon>0$, $g > 0$ and the initial data in \eqref{initialdata} $(\hinit, \htinit)$ be given. Assume that there exists $1<D<\infty$ such that
$$
\hinit(x_1)=\sum_{|\ell|\leq D}\fhinit(\ell)e^{i\ell x_1}\,,
$$
$$
\htinit(x_1)=\sum_{|\ell|\leq D,\ell\neq0}\fhtinit(\ell)e^{i\ell x_1}\,.
$$
Then there exists a unique analytic solution $$h(x_1,t)\in C([0,T];X^{0.5})$$ to {\rm(\ref{cubic-h-model})} for $t$ in the time interval $[0,T]$ with
$$
T=\frac{1}{e \epsilon 4C(\|\Lambda^{0.5}\hinit\|_{X_0},\|\htinit\|_{X_0})} \,.
$$
Equivalently, the Stokes expansion for the cubic $h-$model \eqref{cubic-h-model} converges for arbitrary $\epsilon>0$  and $T>0$ taken sufficiently small.
\end{theorem}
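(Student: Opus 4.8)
The plan is to run, almost verbatim, the Catalan-number scheme used for the quadratic model in the proof of Theorem~\ref{theorem1}, now carrying along the extra cubic forcing. Expanding $\widetilde h=\sum_{k\ge0}\epsilon^k h_k$ with the $h_k$ determined by the linear cascade \eqref{eq:uk2} (initial data \eqref{initialdata3}) and Duhamel solutions \eqref{recurrencecubic}--\eqref{recurrencecubic2} driven by the forcing \eqref{eq:forcingwavecubic}, the objective is to bound $\sup_{0\le t\le T}\sum_{k\ge0}\epsilon^{k+1}\big(\|h_k(t)\|_{X_1}+\|\p_{\!t}h_k(t)\|_{X_1}\big)$, which gives absolute convergence of the Stokes series \eqref{series} in $C([0,T];X_1)$ and, exactly as in the quadratic case, continuity in time into $X_{0.5}$ together with the identification of the sum as the solution of \eqref{cubic-h-model}. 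As before, $\int_{\mathbb{S}^1}h_k=\int_{\mathbb{S}^1}\p_{\!t}h_k=0$, so dividing by $\sqrt{g|\ell|}$ in Duhamel's formula is harmless.

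Fix $R\in\bbZ^+$ with $\tfrac{D(R+1)}{1+R^2}\le1$ and estimate $h_k$ in the shrinking analytic scale $X_{R+1-k}$. The quadratic part of \eqref{eq:forcingwavecubic} is handled exactly as in Theorem~\ref{theorem1}. Each of the eight terms of the cubic nonlinearity $\Q(h_r,h_{j-r},h_{k-2-j})$ in \eqref{eq:Q} is a triple product of factors drawn from $\{h_m,\p_{\!t}h_m\}$, composed with Hilbert transforms and commutators (bounded, order zero on every $X_\tau$) and carrying at most three spatial derivatives. The derivatives are absorbed into the exponential weight via $|\ell|^n\le(nc)^n e^{|\ell|/c}$ together with $|\ell|\le|\ell-m-p|+|m|+|p|$, at the price of a fixed constant and an arbitrarily small loss of $\tau$; the triple product itself is controlled by the Wiener-algebra bound $\|fgh\|_{X_\tau}\le\|f\|_{X_\tau}\|g\|_{X_\tau}\|h\|_{X_\tau}$. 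Since $r,\,j-r,\,k-2-j\le k-2<k-1$, the three factors all embed into the common intermediate space $X_{R+2-k}$ and, after absorbing the derivatives, the product lands in $X_{R+1-k}$, so the weight budget closes. One also needs the ternary analogue of \eqref{proof1_ineq}, namely
\[
\frac{k+1}{1+\sgn(k)R^2}\;\le\;\frac{r+1}{1+\sgn(r)R^2}+\frac{j-r+1}{1+\sgn(j-r)R^2}+\frac{k-2-j+1}{1+\sgn(k-2-j)R^2}\qquad(0\le r\le j\le k-2),
\]
which holds because the three shifted indices sum to $k+1$ while each summand on the right is at least $\tfrac{\cdot+1}{1+R^2}$, the $\sgn$ only helping whenever an index vanishes.

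With $\mathscr A_k(t)$ defined exactly as in the quadratic proof, the two previous steps yield the recursion
\[
\mathscr A_k(t)\le\int_0^t\Big[\sum_{j=0}^{k-1}\mathscr A_j(s)\mathscr A_{k-1-j}(s)+\sum_{\substack{a+b+c=k-2\\ a,b,c\ge0}}\mathscr A_a(s)\mathscr A_b(s)\mathscr A_c(s)\Big]\,ds,\qquad \mathscr A_0(t)\le C,
\]
with $C=C(\|\Lambda^{0.5}\hinit\|_{X_0},\|\htinit\|_{X_0})$: a Catalan recursion with an added \emph{ternary-tree} term. Its majorant generating function $\Phi(x,t)=\sum_k\mathscr A_k(t)x^k$ satisfies the separable ODE $\p_{\!t}\Phi=x\Phi^2+x^2\Phi^3$ with $\Phi(x,0)=C$, hence an algebraic (cubic) relation in $\Phi$, so $\Phi$ is analytic near $(x,t)=(0,0)$ and the $\mathscr A_k(t)$ grow at most geometrically in $k$ for $t$ in a fixed small interval (the all-quadratic branches still giving the Catalan/$4^k$ rate, the ternary branches Fuss--Catalan-type coefficients, with $t$-degrees running from $\lceil k/2\rceil$ to $k$). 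Summing $\sum_k\epsilon^{k+1}\|h_k\|_{X_1}$ as in the quadratic proof then gives convergence on $[0,T]$ with $T$ as stated. Uniqueness is verbatim: the Stokes components $z_k=h_k^{(1)}-h_k^{(2)}$ of the difference obey $\mathscr B_k(t)\le\int_0^t\sum_{j=0}^{k-1}\big(\mathscr A^{(1)}_{k-1-j}+\mathscr A^{(2)}_{k-1-j}\big)\mathscr B_j\,ds$ plus the analogous ternary terms, with $\mathscr B_0\equiv0$, so $\mathscr B_k\equiv0$ by induction.

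The substantive part, and the only genuine obstacle, is the bookkeeping for the cubic nonlinearity: one must check that every one of the eight triple-product terms of $\Q$ fits the weighted Wiener-algebra estimate within the $\tau$-budget left after the shrinking scale and the derivative losses, and then that the resulting mixed quadratic--cubic recursion for the $\mathscr A_k$ still closes with geometric growth (including the tracking of the $t$-powers needed to pin down $T$). Conceptually nothing is new beyond Theorem~\ref{theorem1}; it is the length of $\Q$ and the slightly more delicate combinatorics of the ternary term that constitute the real work.
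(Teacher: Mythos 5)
Your proposal follows the paper's proof essentially verbatim: the same shrinking Wiener scale $X_{R+1-k}$, the same triple-product/triangle-inequality estimate for the eight terms of $\Q$, the same ternary extension of \eqref{proof1_ineq}, and the same mixed quadratic--cubic recursion for $\mathscr{A}_k$, with uniqueness handled identically. The only deviation is the final combinatorial step: where you invoke a majorant generating-function ODE $\p_{\!t}\Phi=x\Phi^2+x^2\Phi^3$, the paper closes the recursion by direct induction, using the identity $\sum_{j=0}^{k-2}\sum_{r=0}^{j}\mathcal{C}_r\mathcal{C}_{j-r}\mathcal{C}_{k-2-j}\le\mathcal{C}_k$ (two applications of the Catalan recursion) to get the explicit bound $\mathscr{A}_k\le 2^k\mathcal{C}_k t^{k+1}C^{k+1}$ for $t<1$; both routes yield the geometric growth in $k$ that the argument requires.
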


\begin{proof}
The proof of this Theorem is similar to the proof of Theorem \ref{theorem1}. As before, we fix $R\in \bbZ^+$ such that
$$
\frac{D(R+1)}{1+R^2}\leq 1,
$$
and consider $0<k\leq R$. We need to estimate $\|\Q(h_r,h_{j-r},h_{k-2-j})\|_{X_{R+1-k}}$. Using the previous ideas in the proof of Theorem \ref{theorem1} together with \eqref{eq:aux} and the trivial identity
$$
|\ell|\leq |\ell-m|+|m|\leq |\ell-m|+|m-n|+|n|,
$$
we have that
\begin{align*}
\|\Q(h_r,h_{j-r},h_{k-2-j})\|_{X_{R+1-k}}&\leq c_1(g)\bigg{[}\|\p_{\!t} h_r\|_{X_{R+2-k}}\|h_{j-r}\|_{X_{R+2-k}}\|\p_{\!t}h_{k-2-j}\|_{X_{R+2-k}}\\
&\quad+\|h_r\|_{X_{R+2-k}}\|h_{j-r}\|_{X_{R+2-k}}\|h_{k-2-j}\|_{X_{R+2-k}}\\
&\quad+\|h_r\|_{X_{R+2-k}}\|\p_{\!t}h_{j-r}\|_{X_{R+2-k}}\|\p_{\!t}h_{k-2-j}\|_{X_{R+2-k}}\bigg{]}\,.
\end{align*}
As before, we have that,  for $r\leq j\leq k-2$
$$
R+2-k=R-(k-2)\leq R+1-r,
$$
$$
R+2-k\leq R+1-j+r=R+1-(j-r),
$$
$$
R+2-k\leq R+1-(k-2-j).
$$
Thus, we can estimate
\begin{align*}
\|\Q(h_r,h_{j-r},h_{k-2-j})\|_{X_{R+1-k}}&\leq c_1(g)\bigg{[}\|\p_{\!t} h_r\|_{X_{R+1-r}}\|h_{j-r}\|_{X_{R+1-(j-r)}}\|\p_{\!t}h_{k-2-j}\|_{X_{R+1-(k-2-j)}}\\
&\quad+\|h_r\|_{X_{R+1-r}}\|h_{j-r}\|_{X_{R+1-(j-r)}}\|h_{k-2-j}\|_{X_{R+1-(k-2-j)}}\\
&\quad+\| h_r\|_{X_{R+1-r}}\|\p_{\!t}h_{j-r}\|_{X_{R+1-(j-r)}}\|\p_{\!t}h_{k-2-j}\|_{X_{R+1-(k-2-j)}}\bigg{]}\,.
\end{align*}
Recalling \eqref{eq:aux2}, we find that
\begin{align*}
& \|h_k(t)\|_{X_{R+1-k}}+\|\p_{\!t} h_k(t)\|_{X_{R+1-k}}\nonumber\\
&\qquad\leq c_2(g)\int_0^t\bigg{[}\sum_{j=0}^{k-1}\bigg{[}\|\p_{\!t}h_{j}(s)\|_{X_{R+1-j}}\|\p_{\!t}h_{k-1-j}(s)\|_{X_{R+1-(k-1-j)}}\nonumber\\
&\qquad+\|h_{j}(s)\|_{X_{R+1-j}}\|h_{k-1-j}(s)\|_{X_{R+1-(k-1-j)}}\bigg{]}\\
&\qquad+\sum_{j=0}^{k-2}\sum_{r=0}^{j}\bigg{[}\|\p_{\!t} h_r\|_{X_{R+1-r}}\|h_{j-r}\|_{X_{R+1-(j-r)}}\|\p_{\!t}h_{k-2-j}\|_{X_{R+1-(k-2-j)}}\\
&\qquad+\|h_r\|_{X_{R+1-r}}\|h_{j-r}\|_{X_{R+1-(j-r)}}\|h_{k-2-j}\|_{X_{R+1-(k-2-j)}}\\
&\qquad+\| h_r\|_{X_{R+1-r}}\|\p_{\!t}h_{j-r}\|_{X_{R+1-(j-r)}}\|\p_{\!t}h_{k-2-j}\|_{X_{R+1-(k-2-j)}}\bigg{]}\bigg{]}\,ds\,.
\end{align*}
We define
$$
\mathscr{A}_k(t)=\max\left\{c_2(g),1\right\}e^{-\frac{k+1}{1+\sgn(k)R^2}D(R+1)}\left[\|h_k(t)\|_{X_{R+1-k}}+\|\p_{\!t} h_k(t)\|_{X_{R+1-k}}\right].
$$
Then, we can conclude that
$$
\mathscr{A}_k(t)\leq \int_0^t \sum_{j=0}^{k-1} \mathscr{A}_{k-1-j}(s)\mathscr{A}_{j}(s)+\sum_{j=0}^{k-2}\sum_{r=0}^{j} \mathscr{A}_{k-2-j}(s)\mathscr{A}_{j-r}(s)\mathscr{A}_{r}(s) \,ds.
$$
We assume that $t<1$. Recalling \eqref{eq:aux3}, and the fact that for the Catalan numbers \eqref{Catalan} we have that
\begin{align*}
\sum_{j=0}^{k-2}\sum_{r=0}^{j}\mathcal{C}_r\mathcal{C}_{j-r}\mathcal{C}_{k-2-j}&=\sum_{j=0}^{k-2}\mathcal{C}_{j+1}\mathcal{C}_{k-1-(j+1)}\\
&=\sum_{n=1}^{k-1}\mathcal{C}_{n}\mathcal{C}_{k-1-n}\\
&\leq\sum_{n=1}^{k-1}\mathcal{C}_{n}\mathcal{C}_{k-1-n}+\mathcal{C}_{0}\mathcal{C}_{k-1}\\
&\leq \mathcal{C}_{k},
\end{align*}
we can prove by induction that
$$
\mathscr{A}_k\leq  2^k\mathcal{C}_kt^{k+1}\left[C(\|\Lambda^{0.5}\hinit\|_{X_0},\|\htinit\|_{X_0})\right]^{k+1}.
$$
Using this bound, we can conclude the existence and uniqueness as in Theorem \ref{theorem1}.
\end{proof}

\section{The Craig-Sulem \emph{WW2} model}\label{sec:CSWW2}
Zakharov  \cite{zakharov1968stability} formulated the water waves problem as the following system of one-dimensional nonlinear and nonlocal equations:
\begin{subequations}\label{Zak1phase}
\begin{alignat}{2}
\partial_t h&=G(h)\Psi\\
\partial_t \Psi&= -g h-\frac{1}{2}|\bp \Psi|^2+\frac{1}{2}\frac{(\bp  h\bp\Psi+G(h)\Psi)^2}{1+|\bp h|^2},
\end{alignat}
\end{subequations}
where $h(x_1,t)$ is the free surface, $\Psi(x_1,t)$ is the trace of the velocity potential $\u=\nabla\phi$ on the free surface
$$
\Psi(x_1,t)=\phi(x_1,h(x_1,t),t),
$$
and $G(h)$ is the Dirichlet-Neumann operator
\begin{equation}\label{D2N}
G(h)\Psi(x_1,t)=\frac{\partial\phi}{\partial_{x_2}}\bigg{|}_{(x_1,h(x_1,t),t)}-\bp h(x_1,t)\frac{\partial\phi}{\partial_{x_1}}\bigg{|}_{(x_1,h(x_1,t),t)}.
\end{equation}

As a way to numerically simulate the evolution of water waves when surface tension is neglected, Craig and Sulem \cite{CrSu1993}  gave
a  power series expansion for the Dirichlet-to-Neumann operator \eqref{D2N} as\footnote{This type of expansion for the Dirichlet-to-Neumann operator was first used in electromagnetism  by Milder \cite{milder1991improved} and Milder \& Sharp \cite{milder1992improved}.}
\begin{equation}\label{seriesD2N}
G(h)=\sum_{j=0}^\infty G_j(h),
\end{equation}
with
$$
G_0=\Lambda,
$$
$$
G_j(h)=-\Lambda^{j-1}\partial_1\frac{h^j}{j!}\partial_1-\sum_{i=0}^{j-1}\Lambda^{j-i}\frac{h^{j-i}}{(j-i)!}G_i(h).
$$

By keeping terms up to certain order in the previous expansion \eqref{seriesD2N} and starting from the Zakharov formulation \eqref{Zak1phase}, Craig and Sulem obtained a hierarchy of new truncated series models of the water waves problem. For instance, when we keep the terms up to second order, $G_0$ and $G_1$, we obtain the WW2 (\emph{water waves 2}) system
\begin{subequations}\label{CraigSulem}
\begin{alignat}{2}
\partial_t h&=\Lambda \Psi-\bp\left(\comm{H}{h}\Lambda\Psi\right)\\
\partial_t \Psi&= -g h+\frac{1}{2}\left((\Lambda\Psi)^2-(\bp \Psi)^2\right).
\end{alignat}
\end{subequations}

We define
\begin{equation}\label{omegadefinition}
\omega(x_1,t)=  \epsilon \omega_0(x_1,t) + \epsilon ^2 \omega_1(x_1,t) \,.
\end{equation}
Similarly, defining $h,\omega$ as in \eqref{hdefinition} and \eqref{omegadefinition}, respectively, and using \eqref{w0w1} we have that
$$
\frac{1}{2} \p_{\!1} \big(|\p_t h_0|^2 - |\omega_0|^2 \big)=\frac{1}{2} \p_{\!1} \big(|H\omega_0|^2 - |\omega_0|^2 \big).
$$
Then, neglecting terms of order $O(\epsilon^3)$ and considering surface tension effects, from \eqref{w0w1} we obtain the following coupled transport equations
\begin{subequations}\label{hmodel-quadtransport2}
\begin{align}
\p_{\!t} h &=  H\omega + \partial_1\left(\comm{h}{H}H\omega\right)\,,\\
\p_{\!t} \omega &=  -g\partial_1 h+\lambda \partial_{1}^3 h+ \Lambda \big(\omega H \omega \big).
\end{align}
\end{subequations}
These equations are the WW2 system obtained by Craig \& Sulem writen in the variable $\omega=\partial_1\Psi.$ Thus, our method is also able to recover the WW2 system. Similarly, following our previous ideas we can prove the following result for the WW2 Craig-Sulem system:
\begin{theorem}\label{theorem1b} Let $g > 0$ and the initial data $(\hinit, \omega_{\operatorname{init}})$ be given. Assume that there exists $1<D<\infty$ such that
$$
\hinit(x_1)=\sum_{|\ell|\leq D}\fhinit(\ell)e^{i\ell x_1}\,,
$$
$$
\omega_{\operatorname{init}}(x_1)=\sum_{|\ell|\leq D,\ell\neq0}\ft{\omega}_{\operatorname{init}}(\ell)e^{i\ell x_1}\,.
$$
Then there exists a unique analytic solution $(h(x_1,t),\omega(x_1,t))$ to (\ref{hmodel-quadtransport2}) for $t$ in the time interval $[0,T]$ with
$$
T=\frac{1}{\epsilon C(\hinit,\omega_{\operatorname{init}})} \,.
$$
\end{theorem}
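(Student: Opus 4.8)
The plan is to follow the proof of Theorem~\ref{theorem1} almost verbatim: substitute the Stokes ansatz \eqref{ansatz}--\eqref{expansions}, now for the pair $(\widetilde h,\widetilde\omega)$, into the transport system \eqref{hmodel-quadtransport2}, derive a linear recursion at each order $\epsilon^{k+1}$, solve each linear problem explicitly in Fourier, estimate the $X_\tau$-norms of the resulting terms, and reduce the inequalities to the Catalan recursion \eqref{Catalan}. The only structural changes relative to Theorem~\ref{theorem1} are that the scalar second-order wave equation \eqref{eq:hmodelfourier} is replaced by a coupled $2\times 2$ first-order system for the two unknowns $(\ft h_k,\ft\omega_k)$, and the dispersion relation $\sqrt{g|\ell|}$ is replaced by $\mu_\ell:=\sqrt{g|\ell|+\lambda|\ell|^3}$.

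Concretely, writing $h=\sum_{k\geq0}\epsilon^{k+1}h_k$, $\omega=\sum_{k\geq0}\epsilon^{k+1}\omega_k$ and matching powers of $\epsilon$ in \eqref{hmodel-quadtransport2} gives, for $k\geq0$,
\begin{align*}
\p_{\!t}h_k&=H\omega_k+\sum_{j=0}^{k-1}\p_{\!1}\big(\comm{h_j}{H}H\omega_{k-1-j}\big)\,,\\
\p_{\!t}\omega_k&=-g\p_{\!1}h_k+\lambda\p_{\!1}^3 h_k+\sum_{j=0}^{k-1}\Lambda\big(\omega_j H\omega_{k-1-j}\big)\,,
\end{align*}
with $(h_0,\omega_0)|_{t=0}=(\hinit/\epsilon,\omega_{\operatorname{init}}/\epsilon)$ and $(h_k,\omega_k)|_{t=0}=(0,0)$ for $k\geq1$, in analogy with \eqref{initialdata2}--\eqref{initialdata3}. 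For each fixed $\ell\neq0$ the homogeneous part of this system has generator with square $\mu_\ell^2 I$, hence eigenvalues $\pm i\mu_\ell$, so $\ft h_k(\ell,\cdot)$ and $\ft\omega_k(\ell,\cdot)$ are given by Duhamel's formula against the oscillatory kernels $\cos(\mu_\ell(t-s))$, $\mu_\ell^{-1}\sin(\mu_\ell(t-s))$ and $\mu_\ell\sin(\mu_\ell(t-s))$ applied to the level-$<k$ forcings; the mode $\ell=0$ of $h_k$ is constant in time (hence zero for $k\geq1$) and the mode $\ell=0$ of $\omega_k$ vanishes, since $H$, $\p_{\!1}$ and $\Lambda$ all kill the mean. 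As in Theorem~\ref{theorem1}, it then suffices to prove summability of $\sum\epsilon^{k+1}\|h_k\|_{X_1}$ and $\sum\epsilon^{k+1}\|\omega_k\|_{X_1}$.

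The decisive structural observation is again that every forcing term is bilinear in $\{(h_j,\omega_j)\}$ with only finitely many derivatives on the product: the Fourier-side bound $\big|\widehat{\comm{h_j}{H}H\omega_{k-1-j}}(\ell)\big|\leq 2\sum_m|\ft h_j(\ell-m)|\,|\ft\omega_{k-1-j}(m)|$ shows that $\p_{\!1}\comm{h_j}{H}H\omega_{k-1-j}$ and $\Lambda(\omega_j H\omega_{k-1-j})$ carry exactly one spatial derivative on a product, and the only frequency growth in the linear solve is the single propagator entry $\mu_\ell\lesssim(g+\lambda)^{1/2}|\ell|^{3/2}$ (capillarity in fact \emph{improves} the kernel $\mu_\ell^{-1}\sin(\mu_\ell(t-s))$). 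Hence the largest frequency factor entering the recursion is $|\ell|^q$ with $q\leq 5/2$, and $|\ell|^q\leq c_q\,e^{|\ell|}\leq c_q\,e^{|\ell-m|}e^{|m|}$ — the mechanism of \eqref{eq:aux} — absorbs each such factor by shifting the Wiener index of each factor by one unit. Choosing $R\in\bbZ^+$ with $\tfrac{D(R+1)}{1+R^2}\leq1$, performing the index bookkeeping of \eqref{eq:aux2}, and setting
$$
\mathscr{A}_k(t)=c_0(g,\lambda)\,e^{-\frac{k+1}{1+\sgn(k)R^2}D(R+1)}\big(\|h_k(t)\|_{X_{R+1-k}}+\|\omega_k(t)\|_{X_{R+1-k}}\big)
$$
for a suitable constant $c_0(g,\lambda)$, the estimates collapse to $\mathscr{A}_0\leq C(\hinit,\omega_{\operatorname{init}})$ (as in \eqref{eq:aux3}) together with $\mathscr{A}_k(t)\leq\int_0^t\sum_{j=0}^{k-1}\mathscr{A}_{k-1-j}(s)\,\mathscr{A}_j(s)\,ds$. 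Induction then gives $\mathscr{A}_k\leq\mathcal{C}_k\,t^k\,C(\hinit,\omega_{\operatorname{init}})^{k+1}$ with $\mathcal{C}_k$ the Catalan numbers, so $\mathcal{C}_k=O(4^k)$ yields geometric convergence of both series in $X_1$ on $[0,T)$ with $T$ as claimed (the universal factor $4e$ and the $\epsilon$-scaling of the data being absorbed into $C$). A Cauchy-product argument identifies the limit $(h,\omega)$ as the unique solution of the Duhamel form of \eqref{hmodel-quadtransport2}, and spatial analyticity upgrades the $L^\infty$-in-time bound to continuity in time at a small loss in the Wiener index, giving $(h,\omega)\in C([0,T];X_{0.5})$. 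Uniqueness follows exactly as in Theorem~\ref{theorem1}: expand the difference $(z,\zeta)=(h^{(1)}-h^{(2)},\omega^{(1)}-\omega^{(2)})$ of two solutions in a Stokes series, derive a Catalan-dominated recursion for its levels with vanishing zeroth level, and conclude $z_k=\zeta_k=0$ for all $k$ by induction.

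The one point that genuinely differs from Theorem~\ref{theorem1}, and the only place requiring care, is the passage from a single wave equation to the coupled $2\times2$ system together with the higher-order capillary term $\lambda\p_{\!1}^3 h$: one must verify that the propagator of the homogeneous linear system is controlled by a fixed power of the frequency despite $\lambda\p_{\!1}^3$ dominating $g\p_{\!1}$. This holds because the sole unbounded propagator entry is precisely $\mu_\ell=\sqrt{g|\ell|+\lambda|\ell|^3}$, a fixed power of $|\ell|$, so it is harmlessly absorbed by the analytic Wiener weight; and the commutator forcing $\p_{\!1}(\comm{h}{H}H\omega)$ has exactly the ``one derivative on a bilinear product'' structure needed to close the Catalan recursion, which is also why the commutator identities of Appendix~\ref{appendix1} suffice here without modification. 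Everything else transcribes from the quadratic $h$-model case.
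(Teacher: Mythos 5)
Your proposal is correct and takes exactly the route the paper intends: Theorem~\ref{theorem1b} is stated in the paper without proof, with only the remark that it follows ``following our previous ideas,'' and your transcription of the Catalan-recursion argument of Theorem~\ref{theorem1} to the coupled first-order system --- oscillatory propagator with entries bounded by $\mu_\ell=\sqrt{g|\ell|+\lambda|\ell|^3}$, a single derivative on each bilinear forcing, and absorption of the resulting powers of $|\ell|$ into the analytic weight via the mechanism of \eqref{eq:aux} --- supplies precisely the omitted details. The only blemish is the sign slip ``generator with square $\mu_\ell^2 I$'': the square is $-\mu_\ell^2 I$, which is of course what yields the purely imaginary eigenvalues $\pm i\mu_\ell$ that you then correctly use.
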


In the following we are going to write the WW2 Craig-Sulem model as a wave equation. For an arbitrary function $f$, we define the operator
$$
\mathscr{T}f=\partial_1\comm{H}{h}f.
$$
The following inequalities hold
\begin{align*}
\|\mathscr{T}f\|_{L^2}&\leq C\|\partial_1 h\|_{\dot{H}^1}\|f\|_{L^2},\\
\|\mathscr{T}^2f\|_{L^2}&\leq C\|\partial_1 h\|_{\dot{H}^1}\|\mathscr{T}f\|_{L^2}\leq (C\|\partial_1 h\|_{\dot{H}^1})^2\|f\|_{L^2},\\
\|\mathscr{T}^kf\|_{L^2}&\leq C\|\partial_1 h\|_{\dot{H}^1}\|\mathscr{T}^{k-1}f\|_{L^2}\leq...\leq (C\|\partial_1 h\|_{\dot{H}^1})^k\|f\|_{L^2}.
\end{align*}
We define the following Neumann series
$$
\mathscr{N}=\sum_{k=0}^\infty \mathscr{T}^k.
$$
Then, if $\|\partial_1 h\|_{\dot{H}^1}C<1$ we have that
$$
\|\mathscr{N}f\|_{L^2}\leq \|f\|_{L^2}\sum_{k=0}^\infty (C\|\partial_1 h\|_{L^2})^k\leq \widetilde{C}(\|\partial_1 h\|_{L^2})\|f\|_{L^2},
$$
so, denoting by where $I$ the identity operator, we have that $I-\mathscr{T}$ is invertible and
$$
(I-\mathscr{T})^{-1}=\mathscr{N}.
$$
We observe that (\ref{hmodel-quadtransport2}a) is equivalent to
$$
(I-\mathscr{T})^{-1}\partial_t h= H\omega
$$
Using the previous operators, we find the following equivalent formulation of the Craig-Sulem WW2 model as a nonlinear wave equation:
\begin{equation}\label{ww2}
\partial_t^2h= -g\Lambda h-\lambda\Lambda^3 h+\bp(\partial_thH\partial_th)-\bp\comm{H}{\partial_th}\partial_th+g\bp\comm{H}{h}\Lambda h+\mathcal{P}
\end{equation}
where the cubic and higher nonlinearities are contained in
\begin{align}
\mathcal{P}&=\bp\left(H\partial_th\mathscr{M}\partial_t h\right)+
\bp\left(H\mathscr{M}\partial_th\left(\partial_th+\mathscr{M}\partial_th\right)\right)
-\bp\left(\comm{H}{\partial_th}\left(\mathscr{M}h_t\right)\right)\nonumber\\
&\quad+\bp\left(\comm{H}{h}\left(\bp\left(\left(-H\partial_th-H\mathscr{M}\partial_th\right) \left(\partial_th+\mathscr{M}\partial_th\right)\right)\right)\right)\,,\label{Q}
\end{align}
and the operator $\mathscr{M}$ is defined as
$$
\mathscr{M}=\sum_{k=1}^\infty \bp\comm{H}{h}^k=\mathscr{N}+I\,.
$$

In particular, we observe that, when the cubic and higher nonlinearities in $\mathcal{P}$ are neglected, the Craig-Sulem \emph{WW2} model reduces to the quadratic $h-$model \eqref{hmodel-quad} (or the, so-called, ``Model" by Akers and Milewski \cite{AkMi2010}).

\section{Estimating the difference between the $h-$models and the solution of the full water waves problem}\label{sec:error}
In this section we   estimate the error of solutions of the $h$-models to solutions of the full water waves system.

Let $(\hinit, \htinit)$ be a $\mathcal{O}(\epsilon)$ initial data and consider its corresponding local solution to the full water waves 
problem $(h^{ww},\omega^{ww})$ in $C([0,T];X^1)$. As we described in the introduction, the well-posedness of the  water waves problem is well-known (see the works by Ovsjannikov \cite{ovsjannikov1974shallow} and Shinbrot \cite{shinbrot1976initial} for the case with analytic initial data), and that solutions exists for a lifespan $T=\mathcal{O}(\epsilon^{-1})$. We have the following 

\begin{theorem}\label{theorem5} Let $\epsilon>0$, $g > 0$ and the initial data $(\hinit, \htinit)$ be given. Assume that there exists $1<D<\infty$ such that
$$
\hinit(x_1)=\epsilon\sum_{|\ell|\leq D}\fhinit(\ell)e^{i\ell x_1}\,,
$$
$$
\htinit(x_1)=\epsilon\sum_{|\ell|\leq D,\ell\neq0}\fhtinit(\ell)e^{i\ell x_1}\,.
$$
Denote by $(h^{ww},\omega^{ww})$ the local solution in $C([0,T(\epsilon)];X^1)$ of the full water waves problem starting from the initial data $(\hinit, \htinit)$ and 
let $h^{qm}$ denote the solution to the quadratic $h-$model {\rm(\ref{hmodel-quad})}. Then, as long as both solutions exist, 
$$
\|h^{ww}-h^{qm}\|_{C([0,T];X^{0.5})}\leq \mathcal{O}(\epsilon^{3}).
$$
\end{theorem}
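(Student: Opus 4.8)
The plan is to compare, term by term, the Stokes expansions of the two solutions. Both $h^{ww}$ and $h^{qm}$ admit expansions $h^{ww}=\sum_{k\ge 0}\epsilon^{k+1}h_k^{ww}$ and $h^{qm}=\sum_{k\ge 0}\epsilon^{k+1}h_k^{qm}$, and the point is that their first two coefficients coincide, $h_0^{ww}=h_0^{qm}$ and $h_1^{ww}=h_1^{qm}$, so the difference is an $\mathcal{O}(\epsilon^{3})$ tail that is summable on an interval of length $\mathcal{O}(\epsilon^{-1})$.

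\emph{Step 1 (Stokes expansion of the water waves solution).} First I would establish the analogue of Theorem \ref{theorem1} for the full water waves system: the coefficients $h_k^{ww}$ are governed by the linear recursions \eqref{phi_recursion}, \eqref{hk_ww} and \eqref{wk_ww}, together with the companion expansions of $\varphi$ and $\omega$. Using Lemmas \ref{lem:p1X} and \ref{lem:p1Y} to express $\p_{\!1}\varphi_k$ restricted to $\mathbb{S}^1$ through $h_0,\dots,h_{k-1}$ and $\omega_0,\dots,\omega_{k-1}$ — recalling that each $\varphi_j$ is, in the notation preceding Lemma \ref{lem:p1X}, a finite sum of modes with polynomial-in-$x_2$ coefficients — one obtains Duhamel representation formulas for $\ft{h}_k^{ww}$ and $\p_{\!t}\ft{h}_k^{ww}$ of the same shape as \eqref{eq:recurrence}--\eqref{eq:recurrence2}, but now with a forcing that is a finite sum of products of a bounded number $p\le p_0$ of the factors $\ft{h}_j,\p_{\!t}\ft{h}_j$, coming from the quadratic through quintic nonlinearities in \eqref{omega_evolution_eq_1phase_h2} and from the elliptic corrections in \eqref{phi_recursion}, with multi-indices summing to $k-(p-1)$. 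Running the weighted $X_\tau$-estimate exactly as in the proof of Theorem \ref{theorem1}, with the exponential weights $e^{-\frac{k+1}{1+\sgn(k)R^2}D(R+1)}$ and the inequality \eqref{eq:aux}, the normalized quantities $\mathscr{A}_k^{ww}(t)=c(g)\,e^{-\frac{k+1}{1+\sgn(k)R^2}D(R+1)}\big(\|h_k^{ww}(t)\|_{X_{R+1-k}}+\|\p_{\!t}h_k^{ww}(t)\|_{X_{R+1-k}}\big)$ are seen to satisfy a generalized Catalan recursion
$$
\mathscr{A}_k^{ww}(t)\le \int_0^t \sum_{p=2}^{p_0}\ \sum_{\substack{i_1+\cdots+i_p=k-(p-1)\\ i_1,\dots,i_p\ge 0}} \mathscr{A}_{i_1}^{ww}(s)\cdots\mathscr{A}_{i_p}^{ww}(s)\,ds\,,
$$
with the convention \eqref{summation} for empty sums. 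Since the resulting numerical sequence is dominated by a constant multiple of a Fuss--Catalan type sequence whose generating function solves a polynomial equation and hence has positive radius of convergence, its growth is still $C_\ast^{\,k}$ for some finite $C_\ast$; one thus gets $\|h_k^{ww}(t)\|_{X^{0.5}}+\|\p_{\!t}h_k^{ww}(t)\|_{X^{0.5}}\le (C_\ast t)^{k}C^{k+1}$, convergence of the series in $C([0,T];X^{0.5})$ for $T=\mathcal{O}(\epsilon^{-1})$, and the tail bound $\sum_{k\ge 2}\epsilon^{k+1}\big(\|h_k^{ww}\|_{X^{0.5}}+\|\p_{\!t}h_k^{ww}\|_{X^{0.5}}\big)\le C\epsilon^{3}$ there. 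By uniqueness of the analytic solution (the cited results of Ovsjannikov and Shinbrot) this series equals $h^{ww}$. The same argument — indeed a special case — applies to $h^{qm}$ and is precisely Theorem \ref{theorem1}.

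\emph{Step 2 (the leading two coefficients agree).} At order $\epsilon$, both $h_0^{ww}$ and $h_0^{qm}$ solve $\p_{\!t}^2 h_0+g\Lambda h_0=0$ with data $(\hinit/\epsilon,\htinit/\epsilon)$; at order $\epsilon^2$, both $h_1^{ww}$ and $h_1^{qm}$ solve the same inhomogeneous wave equation \eqref{h0h1} with zero data — for the $h$-model this is \eqref{eq:uk} with $k=1$, and for the water waves system it is exactly the computation \eqref{w0w1}--\eqref{h0h1} by which the quadratic $h$-model was extracted. By the uniqueness argument of Theorem \ref{theorem1} applied to these linear problems, $h_0^{ww}=h_0^{qm}$ and $h_1^{ww}=h_1^{qm}$. (For $k\ge 2$ the recursions genuinely differ, since \eqref{eq:uk} retains only the quadratic terms whereas \eqref{wk_ww} and \eqref{phi_recursion} contribute cubic and higher interactions; this is why the error is $\mathcal{O}(\epsilon^{3})$ and not smaller.) Consequently, on the common interval $[0,T]$ on which both Stokes series converge,
$$
h^{ww}-h^{qm}=\sum_{k\ge 2}\epsilon^{k+1}\big(h_k^{ww}-h_k^{qm}\big)\,,
$$
and taking $X^{0.5}$-norms and inserting the bounds of Step 1 for $h_k^{ww}$ and of Theorem \ref{theorem1} for $h_k^{qm}$,
$$
\|h^{ww}-h^{qm}\|_{C([0,T];X^{0.5})}\le \epsilon^{3}\,C'\sum_{k\ge 2}\big(\epsilon T\, C_\ast'\big)^{k-2}\le \mathcal{O}(\epsilon^{3})\,,
$$
provided $\epsilon T$ lies below the common radius of convergence, which holds after shrinking $T$ to the minimum of the two lifespans — still of order $\epsilon^{-1}$.

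\emph{Main obstacle.} The decisive step is Step 1: proving that the full water waves Stokes expansion converges in the analytic class with \emph{quantitative}, Catalan-type bounds on each $h_k^{ww}$, and with an interval of convergence of length $\mathcal{O}(\epsilon^{-1})$ so that it genuinely overlaps the interval $[0,T]$ on which $(h^{ww},\omega^{ww})$ is assumed to live. This requires careful bookkeeping of the (up to quintic) nonlinear terms in \eqref{wk_ww}, control of the polynomial-in-$x_2$ structure of the $\varphi_k$ produced by \eqref{phi_recursion} via Lemmas \ref{lem:p1X}--\ref{lem:p1Y}, checking that these lemmas lose only finitely many derivatives (absorbed by the drop from $X^1$ to $X^{0.5}$), and verifying that the Fuss--Catalan recursion above still has geometric growth. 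A secondary point is that $\omega^{ww}$ is only assumed in $X^1$, so the estimate must be run for the coupled pair $(h,\omega)$ — as in Theorem \ref{theorem1} — rather than for $h$ alone.
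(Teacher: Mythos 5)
Your proposal is correct and follows essentially the same route as the paper: both write $h^{ww}$ and $h^{qm}$ as Stokes series, observe that $h_0^{ww}=h_0^{qm}$ and $h_1^{ww}=h_1^{qm}$ because the $k=0,1$ recursions of \eqref{eq:uk} and \eqref{w0w1}--\eqref{h0h1} coincide, and bound the two tails $\sum_{k\ge 2}\epsilon^{k+1}h_k$ by $\mathcal{O}(\epsilon^3)$. The only difference is one of detail: the paper simply asserts the tail bound for the water waves series (citing the analytic existence results of Ovsjannikov and Shinbrot), whereas you correctly flag this as the substantive step and sketch the Fuss--Catalan-type generalization of the Theorem \ref{theorem1} estimates needed to make it quantitative.
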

\begin{proof}
From \cite{ovsjannikov1974shallow} and\cite{shinbrot1976initial}, there exists analytic solutions to the full water waves problem; hence, we write the
solution $h^{ww}$ as an asymptotic series
We have that
\begin{equation}\label{rss1}
h^{ww}(x_1,t)=\epsilon\sum_{j=0}^\infty \epsilon^j h_j^{ww}(x_1,t).
\end{equation} 
It follows that each term $h_j$ 
evolves according to \eqref{phi_recursion}, \eqref{hk_ww} and \eqref{wk_ww}. 
We have  also shown that
\begin{equation}\label{rss2}
h^{qm}(x_1,t)=\epsilon\sum_{j=0}^\infty \epsilon^j h_j^{qm}(x_1,t),
\end{equation}  
with $h_j^{qm}$ evolving according to \eqref{eq:uk}.  

It follows from \eqref{rss1} that
$$
\sup_{0\leq t\leq T}\|h^{ww}-\epsilon\sum_{j=0}^1 \epsilon^j h_j^{ww}\|_{X^{0.5}}\leq \mathcal{O}(\epsilon^{3}) \,,
$$
and from \eqref{rss2},
$$
\sup_{0\leq t\leq T}\|h^{qm}-\epsilon\sum_{j=0}^1 \epsilon^j h_j^{qm}\|_{X^{0.5}}\leq \mathcal{O}(\epsilon^{3}) \,.
$$
We have to estimate
$$
\|\sum_{j=0}^1 \epsilon^j h_j^{ww}-\sum_{j=0}^1 \epsilon^j h_j^{qm}\|_{X^{0.5}}.
$$
From \eqref{eq:uk} and \eqref{w0w1}, we  have that
$$
h_0^{ww}-h_0^{qm}=0 \,,
$$
and from  (\ref{h0h1}b) and \eqref{eq:uk}, we also have that
$$
h_1^{ww}-h_1^{qm}=0 \,.
$$
Thus, the terms in each series only begin to deviate at $O( \epsilon ^3)$, which establishes the result.
\end{proof}

Analogously, we have that
\begin{theorem}\label{theorem6} Let $\epsilon>0$, $g > 0$ and the initial data $(\hinit, \htinit)$ be given. Assume that there exists $1<D<\infty$ such that
$$
\hinit(x_1)=\epsilon\sum_{|\ell|\leq D}\fhinit(\ell)e^{i\ell x_1}\,,
$$
$$
\htinit(x_1)=\epsilon\sum_{|\ell|\leq D,\ell\neq0}\fhtinit(\ell)e^{i\ell x_1}\,.
$$
Denote by $(h^{ww},\omega^{ww})$ the local solution in $C([0,T(\epsilon)];X^1)$ of the full water waves problem starting from the initial data $(\hinit, \htinit)$ and 
let $h^{cm}$ denote the solution to the cubic $h-$model \eqref{cubic-h-model}. Then, as long as both solutions exist, 
$$
\|h^{ww}-h^{cm}\|_{C([0,T],X^{0.5})}\leq \mathcal{O}(\epsilon^{4}).
$$
\end{theorem}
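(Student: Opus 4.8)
The plan is to mirror the proof of Theorem~\ref{theorem5}, carrying the matching of Stokes coefficients one order further. First I would invoke the analyticity of the full water waves solution (Ovsjannikov, Shinbrot) to write, after the rescaling \eqref{ansatz}, $h^{ww}=\epsilon\sum_{j\ge 0}\epsilon^j h_j^{ww}$ with coefficients $h_j^{ww}$ governed by the linear recursion \eqref{phi_recursion}, \eqref{hk_ww}, \eqref{wk_ww}, and similarly $h^{cm}=\epsilon\sum_{j\ge 0}\epsilon^j h_j^{cm}$ with $h_j^{cm}$ governed by \eqref{eq:uk2}. As in Theorem~\ref{theorem5}, I would then record the truncation bounds
\[
\sup_{0\le t\le T}\Big\|h^{ww}-\epsilon\sum_{j=0}^{2}\epsilon^j h_j^{ww}\Big\|_{X^{0.5}}\le\mathcal{O}(\epsilon^{4})\,,\qquad
\sup_{0\le t\le T}\Big\|h^{cm}-\epsilon\sum_{j=0}^{2}\epsilon^j h_j^{cm}\Big\|_{X^{0.5}}\le\mathcal{O}(\epsilon^{4})\,,
\]
the second following from the Catalan-number estimates in the proof of Theorem~\ref{theorem4} (which give $\|h_j^{cm}(t)\|_{X^{0.5}}\le C_0 M^j$ on $[0,T]$, so that $\sum_{j\ge 3}\epsilon^{j-3}\|h_j^{cm}\|_{X^{0.5}}$ is a convergent geometric series for $\epsilon$ small), and the first from the corresponding geometric decay of the Stokes coefficients of the analytic water waves solution. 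It then remains to compare the truncated series $\epsilon\sum_{j=0}^2\epsilon^j h_j^{ww}$ and $\epsilon\sum_{j=0}^2\epsilon^j h_j^{cm}$.

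Next I would establish the coefficient identities $h_0^{ww}=h_0^{cm}$, $h_1^{ww}=h_1^{cm}$, and $h_2^{ww}=h_2^{cm}$. The first two are exactly as in Theorem~\ref{theorem5}: at order $k=0$ both coefficients solve $\p_{\!t}^2 h_0+g\Lambda h_0=0$ with the same data, and at order $k=1$ the cubic nonlinearity in \eqref{eq:uk2} is absent (the sum $\sum_{j=0}^{k-2}$ is empty), so the $k=1$ equation of the cubic model coincides with \eqref{h0h1}b. The new point is the order $k=2$ identity, for which I would appeal directly to the derivation of the cubic $h$-model in Section~\ref{sec:hmodels}: that derivation is precisely the statement that inserting $h=\epsilon h_0+\epsilon^2 h_1+\epsilon^3 h_2$ (with $h_0,h_1,h_2$ the full water waves coefficients) into the water waves system and discarding $\mathcal{O}(\epsilon^4)$ terms yields \eqref{cubic-h-model}; equivalently, the $\mathcal{O}(\epsilon^3)$ part of \eqref{cubic-h-model} reproduces exactly the evolution equation for $h_2^{ww}$ obtained from \eqref{phi_recursion}, \eqref{hk_ww}, \eqref{wk_ww}. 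Comparing with the $k=2$ instance of \eqref{eq:uk2} shows that $h_2^{cm}$ and $h_2^{ww}$ satisfy the same linear wave equation with the same (zero) data, whence $h_2^{cm}=h_2^{ww}$.

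Combining the two ingredients, the three lowest-order terms cancel in the difference, so
\[
h^{ww}-h^{cm}=\epsilon\sum_{j\ge 3}\epsilon^{j}\big(h_j^{ww}-h_j^{cm}\big)\,,
\]
and the geometric tail bounds give $\|h^{ww}-h^{cm}\|_{C([0,T];X^{0.5})}\le\epsilon^{4}\sum_{j\ge 3}\epsilon^{j-3}\big(\|h_j^{ww}\|_{X^{0.5}}+\|h_j^{cm}\|_{X^{0.5}}\big)\le\mathcal{O}(\epsilon^{4})$, as claimed.

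I expect the main obstacle to be the order $k=2$ coefficient identity $h_2^{ww}=h_2^{cm}$. Although it holds ``by construction,'' making it rigorous requires a careful audit of the long computation in Section~\ref{sec:hmodels}: one must verify that every manipulation there --- the substitutions $\omega_0=-H\p_{\!t} h_0$ and $\omega_1=-H\p_{\!t} h_1+\Lambda\comm{h_0}{H}\p_{\!t} h_0$, the repeated use of Tricomi's identity \eqref{tricomi}, and the regrouping via \eqref{eqauxb}--\eqref{eqauxa} --- only discards genuine $\mathcal{O}(\epsilon^4)$ contributions, so that extracting the $\mathcal{O}(\epsilon^3)$ coefficient of \eqref{cubic-h-model} returns precisely the $h_2$-equation of the full recursion and not a modified one. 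A secondary, more routine point is the uniform-in-$\epsilon$ geometric decay of the Stokes coefficients of the full water waves solution on the common interval of existence; this can be obtained either by a Cauchy-estimate argument for the analytic dependence of the rescaled solution on $\epsilon$, or by running the same Catalan-number scheme used for the $h$-models on the full recursion \eqref{phi_recursion}, \eqref{hk_ww}, \eqref{wk_ww}.
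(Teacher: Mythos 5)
Your proposal is correct and follows essentially the same route as the paper: the paper's proof of Theorem~\ref{theorem6} simply repeats the argument of Theorem~\ref{theorem5} one order further, noting that $h_0^{ww}=h_0^{cm}$, $h_1^{ww}=h_1^{cm}$, and $h_2^{ww}=h_2^{cm}$ (the last by construction of the cubic model in Section~\ref{sec:hmodels}), so the series first deviate at $O(\epsilon^4)$. Your additional remarks on the tail bounds and on auditing the $k=2$ identity are reasonable elaborations of steps the paper leaves implicit, not a different method.
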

\begin{proof}
The proof follows  as in Theorem \ref{theorem5} by noting that for the cubic $h$-model
$$
h_2^{ww}-h_2^{cm}=0\,,
$$
and hence the deviation in the series representations of the two solutions occurs at $O( \epsilon ^4)$.
\end{proof}

\section{Numerical comparison of water waves and the $h$-model}\label{sec:numerics}

In this section we compute solutions of the quadratic and cubic
$h$-models and compare them to numerical solutions of the Euler
equations. We find that the linear, quadratic and cubic $h$-models
converge at the expected rates as $\eps\rightarrow0$, and show
regimes where the quadratic model captures the essential features of
the wave beyond the linear regime, and where the cubic model captures
features beyond the quadratic regime. We also observe that the
quadratic model can form corner singularities in finite time, while
the cubic model can evolve to an unstable state where high-frequency
Fourier modes of the solution start growing rapidly. This only causes
problems for large-amplitude waves on excessively fine grids.

\subsection{Solving the Euler equations}

To evolve the full water wave equations, we use the spectrally
accurate boundary integral method developed by Wilkening
\cite{wilkening:standing-waves-2011} and Wilkening and Yu
\cite{wilkening-yu:shooting-2012} for computing standing water waves.
While a conformal mapping approach
\cite{dyachenko:96a,dyachenko:96b,milewski:10} is usually easier to
implement, the result would have to be re-parametrized to be
equally-spaced in $x$ in order to compare with the $h$-model. This is
not particularly difficult, but the boundary integral method is more
natural in this setting. We write the Euler equations in the form
\begin{equation}
  \label{eq:euler}
  \begin{aligned}
    h_t &= \phi_y - h_x\phi_x, \\
    \varphi_t &= P\left[ \phi_yh_t - \frac12\phi_x^2 - \frac12\phi_y^2 - gh +
      \frac\lambda\rho\pa_x\left(\frac{h_x}{\sqrt{1+h_x^2}}\right)\right],
  \end{aligned}
\end{equation}
where $\varphi(x,t) = \phi(x,h(x,t),t)$ is the restriction of the
velocity potential to the free surface, $\lambda$ is the surface
tension parameter (set to zero in this section), and $P$ is the
projection onto zero mean in $L^2(0,2\pi)$. Only $h(x,t)$ and
$\varphi(x,t)$ are evolved in time since $\phi(x,y,t)$ can be computed
from $\varphi(x,t)$ using (\ref{eq:cauchy}) below. The velocity
components $u=\phi_x$, $v=\phi_y$ on the free surface are computed
from $\varphi$ as follows. We identify $\mbb{R}^2$ with $\mbb{C}$ and
attempt to represent the complex velocity potential
$\Phi(z)=\phi(z)+i\psi(z)$ as a Cauchy integral
\begin{equation}\label{eq:cauchy}
  \Phi(z) = \frac1{2\pi i}\int_0^{2\pi} \frac{\zeta'(\alpha)}2\cot\frac{\zeta(\alpha)-z}2
  \mu(\alpha)\,d\alpha, \qquad
  \zeta(\alpha) = \alpha + ih(\alpha), \;\; 0\le\alpha<2\pi,
\end{equation}
where $\mu(\alpha)$ is real-valued and we have suppressed $t$ in the
notation. Here we used $\alpha=x$ to parametrize the horizontal
component of the free surface, but the formulas in this section
generalize to allow for mesh refinement or overturning waves if one
writes $\zeta(\alpha)=\xi(\alpha)+i h(\alpha)$.  The cotangent kernel
comes from summing the Cauchy kernel over periodic images
\begin{equation}
  \frac12\cot\frac z2 = PV\sum_{k\in\mbb Z} \frac1{z+2\pi k}, \qquad\quad
  (PV = \text{ principal value}).
\end{equation}
Letting $z$ approach $\zeta(\alpha)$ from below and using the Plemelj
formula \cite{muskhelishvili:sing} gives
\begin{equation}
  \label{eq:Phi1}
  \Phi(\zeta(\alpha)^-) = -\frac12\mu(\alpha)+\frac i2H\mu(\alpha) + \frac1{2\pi i}\int_0^{2\pi}
  K(\alpha,\beta)\mu(\beta)\,d\beta,
\end{equation}
where
\begin{equation}
  \label{eq:K:ab}
  K(\alpha,\beta) =
  \frac{\zeta'(\beta)}2\cot\frac{\zeta(\beta)-\zeta(\alpha)}2 -
  \frac12\cot\frac{\beta-\alpha}2.
\end{equation}
The second term of $K$ is included to cancel the singularity of the
first term, which makes $K(\alpha,\beta)$ continuous at
$\alpha=\beta$, with $K(\alpha,\alpha) =
\zeta''(\alpha)/[2\zeta'(\alpha)]$. In fact, the components of $K$ are
real analytic, periodic functions of $\alpha$ and $\beta$ on $\mbb
R/2\pi\mbb Z$ if $\zeta(\alpha)$ (i.e.~$h(\alpha)$) is real-analytic
and periodic.  Including this term in $K(\alpha,\beta)$ is accounted
for in (\ref{eq:Phi1}) by the Hilbert transform term, using
\begin{equation}
  H f(\alpha)=\frac1\pi PV\!\int_0^{2\pi}\frac12\cot\frac{\alpha-\beta}2f(\beta)\,d\beta.
\end{equation}
The real part of (\ref{eq:Phi1}) gives a second-kind Fredholm integral equation
\cite{folland:pde} that can be solved for $\mu$ given $\varphi$,
\begin{equation}\label{eq:mu}
  -\frac12\mu(\alpha) + \frac1{2\pi}\int_0^{2\pi}\im\{K(\alpha,\beta)\}\mu(\beta)\,d\beta
  = \varphi(\alpha), \qquad 0\le\alpha<2\pi.
\end{equation}
Differentiating (\ref{eq:cauchy}), integrating by parts, and using a
standard argument for principal value integrals to handle the
interchange of $\alpha$ and $\beta$ in the kernel, one may show
\cite{wilkening-yu:shooting-2012} that
\begin{equation}
  \label{eq:Phi:z1}
  \zeta'(\alpha)\Phi_z(\zeta(\alpha)^-) = 
  -\frac12\mu'(\alpha)+\frac i2H\mu'(\alpha) - \frac1{2\pi i}\int_0^{2\pi}
  K(\beta,\alpha)\mu'(\beta)\,d\beta.
\end{equation}
Since $\phi_x-i\phi_y = \phi_x+i\psi_x = \Phi_z$, (\ref{eq:Phi:z1})
gives an explicit formula for $\phi_x$ and $\phi_y$ on the free
surface once $\mu(\alpha)$ is known from (\ref{eq:mu}). Equations
(\ref{eq:mu}) and (\ref{eq:Phi:z1}) are easily discretized with
spectral accuracy using the trapezoidal rule on a uniformly spaced grid
\begin{equation}
  \alpha_j = 2\pi j/M, \qquad j=0,\dots,M-1
\end{equation}
to compute integrals, and the Fourier transform to compute derivatives
and the Hilbert transform (with symbol $\hat H_k =
  -i\opn{sgn}(k)$). For example, (\ref{eq:mu}) becomes
\begin{equation}
  -\frac12\mu_i + \frac1M\sum_{j=0}^{M-1}\im\{K(\alpha_i,\alpha_j)\}\mu_j = \varphi_i, \qquad
  i=0,\dots,M-1,
\end{equation}
where we recall that $K(\alpha_i,\alpha_i)=\zeta''(\alpha_i)/[2\zeta'(\alpha_i)]$.
We timestep (\ref{eq:euler}) using an 8th order Runge-Kutta method due to
Dormand and Prince \cite{hairer:I,dormand:81}.  We also need $h_t$ in the comparison
to the $h$-model, but this formula is part of the right-hand side of (\ref{eq:euler}).

\subsection{Timestepping the $h$-model}

Next we describe an effective method of timestepping the $h$-model
(linear, quadratic or cubic). First, we write it as a first-order
system of the form $u_t=Lu+N(u,t)$, which for the cubic case is
\begin{equation}\label{eq:hmodel:etd}
  \underbrace{\der{}{t}\begin{pmatrix} h \\ h_t \end{pmatrix}}_{\jd u_t} =
  \underbrace{\begin{pmatrix} 0 & P \\ -g\Lambda & 0 \end{pmatrix}
  \begin{pmatrix} h \\ h_t \end{pmatrix}}_{\jd Lu} +
  \underbrace{\begin{pmatrix} P_0(h_t) \\ -\Lambda\big[(Hh_t)^2\big]
      + g\pa_x\big(hh_x\big) + g\Lambda\big(h\Lambda h\big)
    + Q(h) \end{pmatrix}}_{\jd N(u,t)}.
\end{equation}
For the quadratic model, we drop $Q(h)$, and for the linear model, the entire
second component of $N$ is set to zero.  Here
\begin{equation}
  Pf(x) = f(x) - P_0f, \qquad P_0f = \frac1{2\pi}\int_0^{2\pi}f(x)\,dx
\end{equation}
are the orthogonal projections onto zero mean, and onto the constant
functions, respectively. Though $P_0(h_t)$ is linear, it is convenient
to move it from $L$ to $N$ to avoid a Jordan block in the
diagonalization of $L$ (see below). We use the spectral exponential
time differencing scheme of Chen and Wilkening
\cite{chen-wilkening:setd}, which is an arbitrary-order,
fully-implicit variant of the popular fourth-order ETD scheme of Cox
and Matthews \cite{cox-matthews:etd-2002, kassam-trefethen:etd-2006},
to solve the stiff system (\ref{eq:hmodel:etd}). To evolve the
solution over a timestep, which, for simplicity, we take to be from
$t=0$ to $t=h$, we solve the Duhamel integral equation
\begin{equation}
  u(t) = e^{Lt}u_0 + \int_0^t e^{(t-\tau)L} N\big(\tau,u(\tau)\big)\,d\tau
\end{equation}
by collocation using a Chebyshev-Lobatto grid. In more detail, let
\begin{equation}
  t_j = c_j h, \qquad c_j = \frac{1-\cos(\pi j/\nu)}2, \qquad (j=0,\dots,\nu).
\end{equation}
Given $u_0$, we look for $u_1,\dots,u_\nu$ such that
\begin{equation}
  u_r = e^{t_rL}u_0 + \int_0^{t_r}e^{(t_r-\tau)L}\sum_{j=0}^\nu N(t_j,u_j)\ell_j(\tau/h)\,d\tau,
  \qquad (r=1,\dots,\nu),
\end{equation}
where $l_j(s)=\prod_{k\ne j}\frac{s-c_k}{c_j-c_k}$ are
the Lagrange polynomials for the Chebyshev-Lobatto grid on $[0,1]$.
The change of variables $\tau=hs$, $d\tau=h\,ds$ then gives
\begin{equation}\label{eq:setd:eqs}
  u_r = e^{c_rhL}u_0 + h\sum_{j=0}^\nu \left(\int_0^{c_r}e^{(c_r-s)hL}\ell_j(s)\,ds\right) N(c_jh,u_j),
  \qquad (r=1,\dots,\nu),
\end{equation}
which is a nonlinear system of equations that can be solved
efficiently using a Newton-Krylov solver; see
\cite{chen-wilkening:setd} for details. The algorithm in
\cite{chen-wilkening:setd} is designed so the user only has to supply
routines to apply $U$, $S$ and $U^{-1}$ to arbitrary vectors, where
$L=USU^{-1}$. Internally, when the Newton-Krylov solver needs to apply
$e^{c_rhL}$ and $\int_0^{c_r} e^{(c_r-s)hL}\ell_j(s)\,ds$ to a
sequence of vectors, it does so by asking the user to apply only $U$,
$S$ and $U^{-1}$. This makes implementing the method on new problems
straightforward as long as $L$ can be diagonalized efficiently.

In our case, $L$ is diagonalized by the Fourier transform, as we now
explain. Let $\mc{F}$ be the ``r2c'' version of the Fast Fourier
Transform, which maps
\begin{equation}
  V \ni \mat{ u_0 \\ u_1 \\ \vdots \\ u_{M-1} } \;\overset{\mc F}\longmapsto \;
  \mat{ \hat u_0 + i\hat u_{M/2} \\ \hat u_1 \\ \vdots \\ \hat u_{M/2-1} } \in \hat V, \qquad
  \hat u_k = \frac1M\sum_{j=0}^{M-1} u_j e^{-2\pi ijk/M}.
\end{equation}
Here we assume $M$ is even, and we note that $\hat u_0$ and $\hat u_{M/2}$ are real
since $e^{-2\pi ijk/M}\in\{1,-1\}$ when $k=0$ or $k=M/2$. The ``missing'' Fourier modes
are known implicitly from $\hat u_{-k} = \overline{\hat u_k}$. The mapping $\mc F$ is
an isometry of real vector spaces if we endow $V$ and $\hat V$ with the inner products
\begin{equation}
  \big\la u, v\big\rangle = \frac1M\sum_{j=0}^{M-1}u_jv_j, \qquad
  \big\la \hat u,\hat v\big\rangle = \hat u_0\hat v_0 + \hat u_{M/2}\hat v_{M/2} +
  \sum_{k=1}^{M/2-1} 2\opn{Re}\{ \overline{\hat u_k}\hat v_k \}.
\end{equation}
To diagonalize $L$, we note that both $\Lambda$ and $P$ in
(\ref{eq:hmodel:etd}) kill constant functions, and we define the
finite-dimensional truncations of $\Lambda$ and $P$ to
also kill the Nyquist mode $u_j=(-1)^j$. Thus
\begin{equation}
L = \mat{ \mc F^{-1} \\ & \mc F^{-1} }
\mat{ 0 & E \\ -gK & 0 }\mat{ \mc F \\ & \mc F },
\end{equation}
where $E = \diag[0,1,\dots,1]$, $K = \diag[0,1,2,\dots,M/2-1]$, and
multiplying a vector in $\hat V$ by $E$ or $K$ via complex arithmetic
is still linear when $\hat V$ is regarded as a real vector space.
The inner matrix can be diagonalized into $2\times2$ blocks by a
permutation matrix
\begin{equation}\label{eq:diag:EgK}
  \mat{ 0 & E \\ -gK & 0 } =
  \mat{ & E_e & \\ & E_o & }
  \mat{ A_0 \\ & \ddots \\ & & A_{M/2-1} }
  \mat{ E_e^T & E_o^T }, \;\;
  \begin{array}{c}
    A_0=\mat{ 0 & 0 \\ 0 & 0 }, \\[10pt]
    A_{k\ge1}=\mat{ 0 & 1 \\ -gk & 0 },
    \end{array}
\end{equation}
%
where  $E_{e,ij} = \delta_{2i,j}$, $E_{o,ij} =
\delta_{2i+1,j}$ for $0\le i<M/2$, $0\le j<M$.  Left-multiplication
  by $E_e$ or $E_o$ selects the even-index or odd-index rows,
  respectively; right-multiplication by $E_e^T$ or $E_o^T$ selects
  even or odd-index columns; and applying $(E_e^T,E_o^T)$ to $[\hat h;\hat
    h_t]$ interlaces the components of $\hat h\in\hat V$ and $\hat h_t\in\hat V$,
  so that $\pa_t\hat h_k$ follows $\hat h_k$. Finally,
$A_0$ is already diagonal while $A_k=Q_kS_kQ_k^{-1}$ with
\begin{equation}\label{eq:diag:cx}
   Q_k = \mat{ 1 & 1 \\ i\sqrt{gk} & -i\sqrt{gk} }, \quad
  S_k = \mat{ i\sqrt{gk} \\ & -i\sqrt{gk} }, \quad
  Q_k^{-1} = \frac12\mat{1 & -i/\sqrt{gk} \\ 1 & i/\sqrt{gk} }.
\end{equation}
The complex numbers in $E$, $K$, $E_e$, $E_o$, $A_k$, $Q_k$,
$Q_k^{-1}$ and $S_k$ actually represent real $2\times2$
matrices with the identification
\begin{equation}\label{eq:diag:real}
  \alpha+i\beta \longleftrightarrow
  \mat{ \alpha & -\beta \\ \beta & \alpha }. \qquad \text{Example:} \quad
  S_k = \mat{ 0 & \!-\sqrt{gk} \\ \sqrt{gk} & 0 \\ & & 0 & \sqrt{gk} \\ & & -\sqrt{gk} & 0 }.
\end{equation}
Treating the entries of $\hat V$ as complex numbers rather than
flattening $\hat V$ to $\bbR^M$ by interlacing real and imaginary
parts is convenient, but gets confusing in the last step when
complex eigenvalues arise. The final step of diagonalizing $L$ (had we
  flattened $\hat V$) would be to diagonalize the real matrix $S_k$ in
(\ref{eq:diag:real}), which would lead to a pair of double eigenvalues
$\pm i\sqrt{gk}$. But applying any power series to $S_k$ in
(\ref{eq:diag:cx}) and then flattening will give the same result as
applying the power series directly to $S_k$ in
(\ref{eq:diag:real}). In particular, $e^{c_rhS_k}$ and
$e^{(c_r-s)hS_k}$, which are needed to compute $e^{c_rhL}$ and
$e^{(c_r-s)hL}$ in (\ref{eq:setd:eqs}), can be computed either
way. This justifies not flattening $\hat V$, and cuts the number of
eigenvalues that are explicitly dealt with in half --- each
double-eigenvalue in (\ref{eq:diag:real}) appears only once in
(\ref{eq:diag:cx}).

Note that moving $P_0(h_t)$ over to $N(u,t)$ in (\ref{eq:hmodel:etd})
was necessary to avoid a Jordan block in $A_0$ in (\ref{eq:diag:EgK}).
We also remark that normally one wants to include the highest-order
differential operators in $L$, but in our case they are nonlinear and
depend on time, so this was not possible. However, the method still
does not suffer from severe CFL constraints since fully implicit
Runge-Kutta schemes based on Lobatto quadrature are $L$-stable
\cite{hairer:I}. The above method reduces to such a scheme when $L=0$,
and we would not expect instabilities to arise by separating the
linear part of the operator into a Duhamel-based formulation.

\subsection{Comparison of water waves and the $h$-model}

As a first test, we consider the family of solutions $h(x,t)=\eps\htil(x,t)$,
$\varphi(x,t) = \eps \phitil(x,t)$ with
\begin{equation}
  \label{eq:ex1}
  \text{Example 1:} \qquad
  \htil(x,0) = \frac15\sin x + \frac1{10}\sin(2x) + \frac15\sin(3x), \qquad
  \phitil(x,0) = 0.
\end{equation}
The maximum slope of the initial wave profile $h(x,0)$ occurs at the
origin, and is equal to~$\eps$. The wave starts at rest and evolves
under the influence of gravity.  The solution of the full Euler
equations for $\eps=5/3$ for $0\le t\le 0.625$ is shown in
Figure~\ref{fig:ex1}($a$), along with the spatial Fourier mode
amplitudes (panel $b$) of $h(x,t)$ at the times shown in panel
($a$). Only positive index Fourier modes are shows since
$c_{-k}=\overline{c_k}$. A 3072-point spatial grid was used, with 720
uniform timesteps of the DOPRI8 Runge-Kutta method
\cite{hairer:I,dormand:81}.  Every 18th step was recorded (at $t=k/64$,
  $0\le k\le 40$).  At $t=0.625=40/64$, a jet begins to form in each
of the troughs, with the lowest trough containing the strongest jet.

\begin{figure}[p]
  \begin{center}
\includegraphics[width=.99\linewidth]{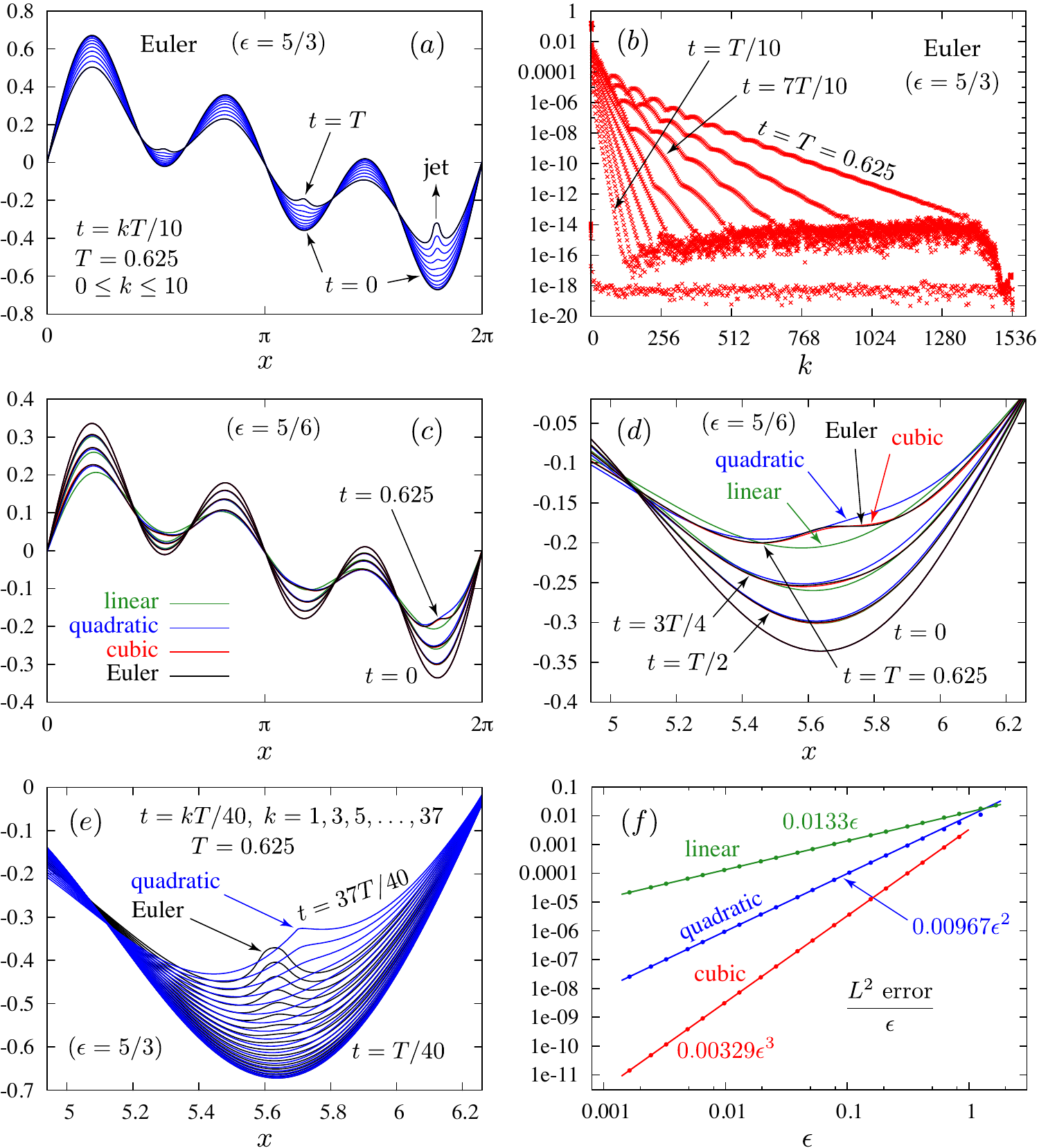}
\caption{\label{fig:ex1} Comparison of the linear, quadratic and cubic
  $h$-models to the full Euler equations for Example 1. ($a$)
  Large-amplitude Euler solution with $\eps=5/3$.  ($b$) Amplitude of
  Fourier modes for this solution. ($c$ and $d$) The cubic model
  remains close to the true solution for $\eps=5/6$ and $0\le
  t\le0.625$, whereas the linear and quadratic models miss key
  features. ($e$) The quadratic model forms a corner in the $\eps=5/3$
  case. ($f$) Relative $L^2$-errors versus $\eps$.}
\end{center}
\end{figure}

Panels $(c)$ and $(d)$ of Figure~\ref{fig:ex1} compare the solutions
of the linear, quadratic and cubic $h$-models with that of the full
Euler equations with $\eps=5/6$ in (\ref{eq:ex1}) at
$t\in\{0,\frac12T,\frac34T,T\}$, where $T=40/64$. At this amplitude,
both the linear and quadratic models miss the bulge in the lowest
trough as the jet begins to form, whereas the cubic model captures it
closely. If $\eps$ is doubled to $\eps=5/3$ (as in panel $a$), niether
the cubic nor quadratic models can be evolved all the way to
$t=40/64$. Panel ($e$) of Figure~\ref{fig:ex1} shows that the
quadratic model (on a 3072-point grid) appears to form a corner
singularity around $t=37/64$, and is far from the corresponding Euler
solution at this time. For the cubic model (evolved on a 1024-point
  grid), high-frequency Fourier modes begin to grow at $t=12/64$. By
$t=30/64$, roundoff errors in these high-frequency modes have been
amplified to be comparable in size to the leading modes.  The solution
completely blows up shortly afterwards, with values on the grid
jumping from $O(1)$ at $30/64$ to $O(10^{250})$ at 31/64. Increasing
the number of timesteps by a factor of 1000 did not change the time at
which the instability begins or the growth rate of the modes, so this
is not likely a CFL issue. However, increasing the spatial grid size
does affect the blow-up time since higher-frequency modes grow
faster. We omit a figure showing this for Example 1 as similar
behavior is observed in Example 3 below. Panel ($f$) of
Figure~\ref{fig:ex1} shows the $L^2$ error of the linear, quadratic
and cubic models at $t=40/64$ versus~$\eps$, where the $L^2$ errors
have been scaled by $\eps^{-1}$ to account for the decreasing norm
of the exact solution. As expected, these errors decay as
$O(\eps^k)$, where $k=1$ for the linear $h$-model, $k=2$ for the
quadratic $h$-model, and $k=3$ for the cubic $h$-model.

The second example we consider consists of an initial bulge over a
flat surface evolving from rest. More specifically, we consider the
family of functions
\begin{equation}
  \text{Example 2:}\qquad \tilde h_n(x,0) = \frac2n\left(1+\frac1{n^2}\right)^{\frac{n^2-1}2}
  \left[\sin^{n^2+1}\frac x2 - \frac{\Gamma((n^2/2)+1)}{\sqrt\pi\,\Gamma((n^2/2)+(3/2))}\right],
\end{equation}
where $n\in\{1,3,5,7,\dots\}$. The constants were chosen so that
$\tilde h_n(x,0)$ has zero mean and maximum slope $\pm1$, occuring
where $\tan(x/2)=\pm n$. The cases $n=7$ and $n=25$ are shown in panel
($a$) of Figure~\ref{fig:ex2}.  Panel ($b$) shows that the $L^2$ error
at $t=6$, scaled by $\eps^{-1}$, decays at the expected order as
$\eps\rightarrow0$ for $n=7$ and $n=25$. For $n=7$, the best-fit lines
shown are $0.0326\eps$, $0.0156\eps^2$ and $0.00476\eps^3$. For
$n=25$, they are $0.00542\eps$, $0.00337\eps^2$ and $0.000524\eps^3$,
which are smaller than in the $n=7$ case. This is not surprising as
the $L^2$-norm of the underlying wave is also smaller when $n=25$.

Panels ($c,d,e$) of Figure~\ref{fig:ex2} compare solutions of the
Euler equations with those of the linear, quadratic and cubic
$h$-models with $\eps=0.429$ and $n=25$ over $0\le t\le 1$ (panel
  $c$), $1\le t\le 2$ (panel $d$), and $5\le t\le 6$ (panel $e$).  The
linear model already deviates substantially from the exact solution by
$t=0.4$ (panel $c$), when the initial bulge is still accelerating
downward.  The quadratic and cubic models remain close to the Euler
solution throughout the evolution to $t=6$ (panel $e$), correctly
damping out the wave near the origin and propagating the correct
number of ripplies outward in both directions. The quadratic model
develops a sharper crest at $t=1$ (panel $d$) than the Euler solution,
which also sharpens somewhat at this time. For both equations, the
wave becomes smoother again.  This can be seen in panel ($f$), where
the Fourier mode amplitudes decay more slowly at $t=1$ than at $t=0$
or $t=6$. The minimum decay rate for both equations happens near
$t=1$.  The quadratic model has roughly 6 times as many active modes
as the Euler solution at $t=1$ due to the excessive sharpening at the
crest observed in panel ($d$). At later times (e.g.~panel~$e$), the
quadratic model retains remnants of the overly sharp crest that formed
at $t=1$, with smaller-scale features visibly deviating from the exact
solution (though the overall wave profiles are similar.) The cubic
model is nearly indistinguishable from the Euler model at the
resolution of the graphs in panels ($d$) and ($e$).  It has about
twice as many active Fourier modes as the Euler solution at $t=1$ and
$t=6$, as shown in panel ($g$).  The Euler modes are the same in
panels ($f$) and ($g$), and all three equations have the same Fourier
coefficients at $t=0$ in these plots.

\begin{figure}[p]
  \begin{center}
\includegraphics[width=.95\linewidth]{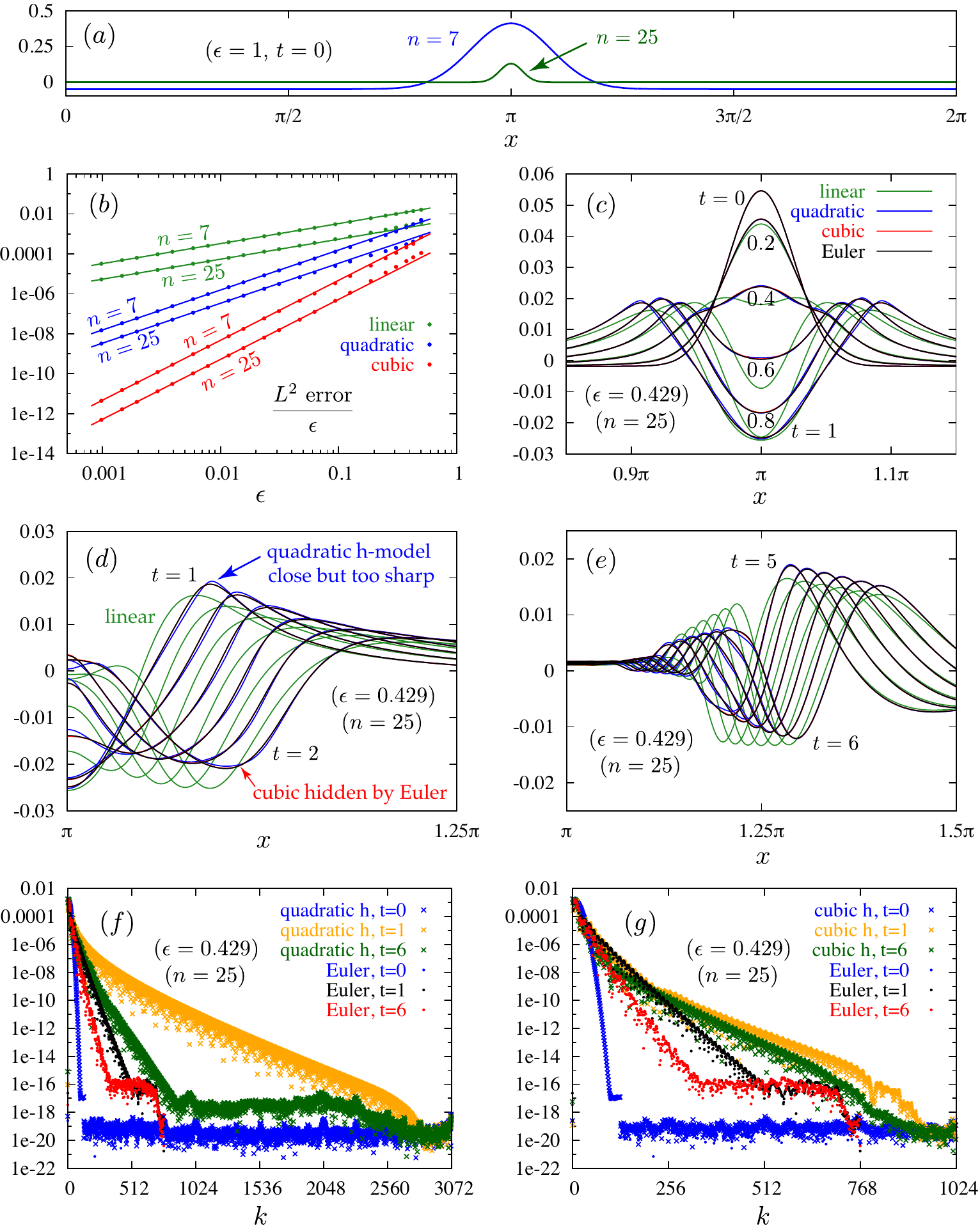}
\caption{\label{fig:ex2} Comparison of the linear, quadratic and cubic
  $h$-models to the full Euler equations for Example 2.  ($a$) Initial
  wave profiles before scaling by $\eps$. ($b$) Relative $L^2$-errors
  versus~$\eps$. ($c,d,e$) Solutions with $n=25$ and $\eps=0.429$ at
  the times shown. ($f,g$) Comparison of Fourier mode amplitudes at
  $t=0,1,6$. The sharper wave crest in the solution of the quadratic model
  at $t=1$ leads to slower mode decay than the Euler or cubic solutions.}
\end{center}
\end{figure}

Our third example consists of a family of standing water waves
computed using the overdetermined shooting method described in
\cite{wilkening:standing-waves-2011,wilkening-yu:shooting-2012}. Unlike
the previous two examples, the waves in this family are not related by
a simple scaling of the initial condition via $h(x,0)=\eps\tilde
h(x,0)$, $\varphi(x,t)=\eps\tilde\varphi(x,t)$.
%
%
In the previous examples, we chose $\tilde h(x,0)$ to have maximum slope 1 so
that $\eps$ was the maximum slope of $h(x,0)$.  For standing waves, we
match this latter property:
\begin{equation}\label{eq:stand:eps}
  \text{Example 3 (standing waves):}\qquad \eps = \text{
    maximum slope of $h(x,0)$.}
\end{equation}
Here we assume the fluid is initially at rest.
As before, we choose the length-scale so that the spatial period
is $2\pi$ after non-dimensionalization. Let $T$ (which depends on
  $\eps$) be half the temporal period of the standing wave so that
the wave comes to rest when $t\in T\bbZ$. At even multiples of $T$,
the wave crests are assumed to be located at $x\in2\pi\bbZ$, and
at odd multiples they are located at $x\in(\pi+2\pi\bbZ)$.

Characterizing the amplitude of the wave by its maximum slope is
useful for comparing with Examples 1 and 2, but it is not the most
convenient for actually computing standing waves. In the numerical
algorithm of Wilkening and Yu\cite{wilkening-yu:shooting-2012}, a
Fourier coefficient of the initial condition was used as the
bifurcation parameter. For low-amplitude waves, the initial amplitude
of the fundamental mode is a natural choice.  Yet another choice is
half the maximum crest to trough height ($\frac12$CTH). In all three
cases (slope, mode amplitude, or $\frac12$CTH), assuming $g=1$,
\begin{equation}\label{eq:eps:stand}
  h(x,t) = \eps\cos x\cos t + O(\eps^2).
\end{equation}
Building on previous work \cite{rayleigh:1876,penney:52,tadjbakhsh},
Schwartz and Whitney \cite{schwartz:81} developed a recursive
algorithm to compute the power series expansion for standing water
waves of this type in conformal variables, and computed the first 25
terms. Their choice of amplitude was $\frac12$CTH, which we denote by
$\veps$. While the full representation of the wave profile and
velocity potential is too complicated to reproduce here, we can report
the leading terms of the period and maximum slope:
\begin{equation}
  \frac T{\pi}= 1+\frac12\veps^2-\frac7{256}\veps^4+O(\veps^6), \qquad
  \eps = \veps + \frac12\veps^3+O(\veps^5).
\end{equation}
Amick and Toland proved that the terms in the Schwartz/Whitney
expansion are uniquely determined to all orders \cite{amick:87}, but
the question of whether the series has a positive radius of
convergence remains open.  Recent work using Nash-Moser theory has
been able to establish existence on a Cantor set of the bifurcation
parameter close to zero-amplitude \cite{iooss:05}.  Regardless of the
eventual convergence or divergence of the series, truncating the
series yields a family of initial conditions (over a range
  $0\le\veps\le\veps_\text{max}\approx0.06$ when 25 terms are
  retained) that return to their starting configurations to within
machine precision when evolved under the Euler equations. The shooting
method in \cite{wilkening-yu:shooting-2012} gives solutions that agree
with the Schwartz and Whitney series to all 16 digits at small
amplitude, but is not limited to such a narrow range of $\veps$ to
find solutions that are time-periodic to machine precision.

\begin{figure}[p]
  \begin{center}
\includegraphics[width=.99\linewidth]{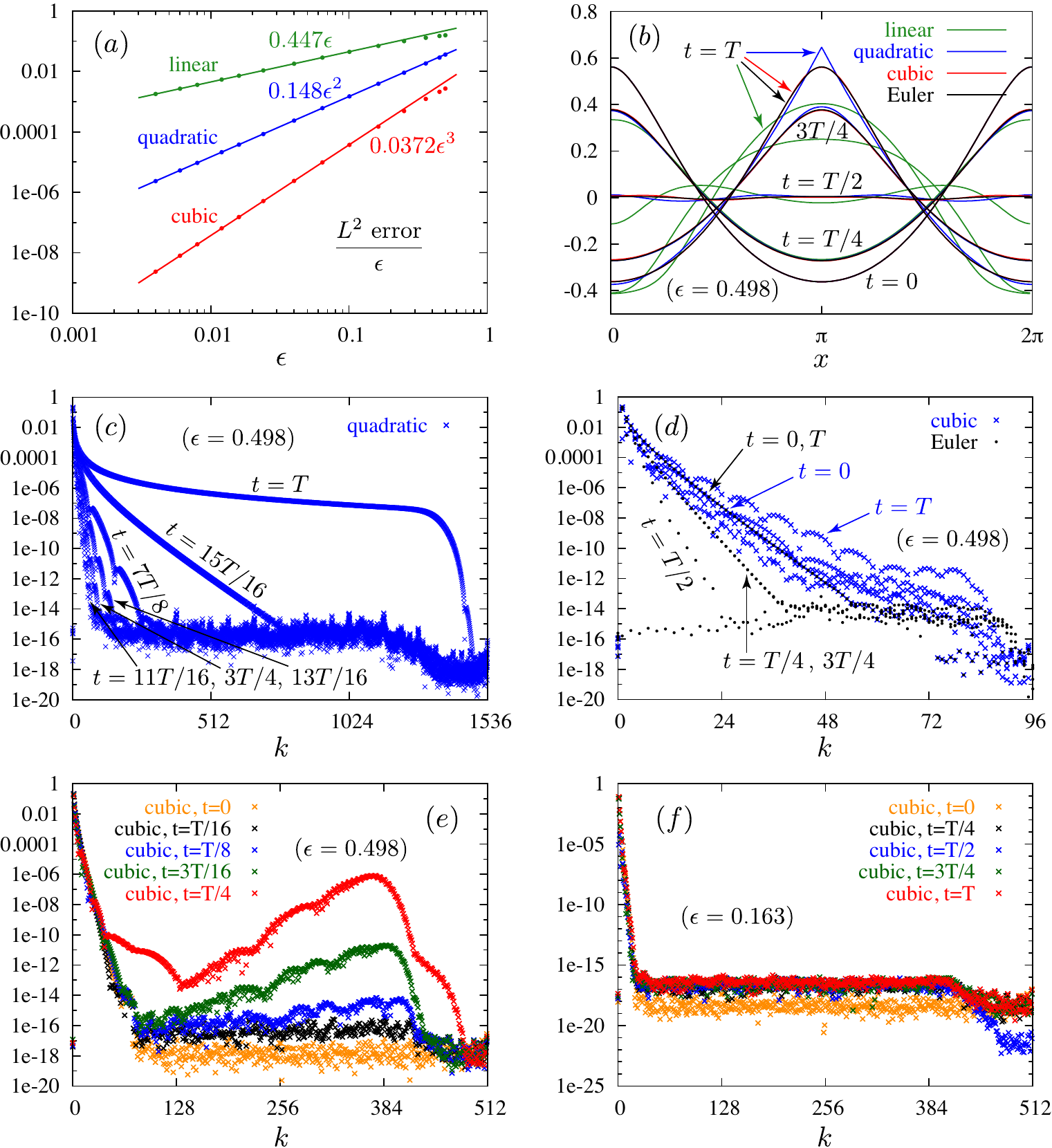}
\caption{\label{fig:ex3} Comparison of the linear, quadratic and cubic
  $h$-models to the full Euler equations for Example 3. ($a$) Relative
  $L^2$-errors versus $\eps$. ($b$) Snapshots of the solutions at
  $t=0,T/4,T/2,3T/4,T$ for $\eps=0.498$. The quadratic model nearly
  forms a corner at $t=T$. ($c$) The formation of a corner causes
  high-frequency Fourier mode amplitudes to grow as $t\rightarrow T$
  in the quadratic model. ($d$) The cubic model and Euler equations
  remain well-resoved with 192 Fourier modes. (Only 96 are shown since
    $c_{-k}=\overline{c_k}$.)  ($e,f$) Over short times or small
  amplitude, the cubic model remains well-posed; however, for
  $\eps=0.498$, with 1024 modes, the cubic model loses stability for
  $t>T/16$. The solution completely blows up shortly after $t=T/4$.}
\end{center}
\end{figure}

For each standing wave computed by the shooting method, we find the
maximum slope via Newton's method to determine $\eps$. We then evolve
the $h$-models using the initial conditions of the standing wave and
compare them to the Euler solution at $t=T$. Panel (a) of
Figure~\ref{fig:ex3} shows that the relative errors in the linear,
quadratic and cubic models decay at the expected rates.
Panel (b) shows snapshots of the solutions of the $h$-models and the
Euler equations for the $\eps=0.498$ wave at $t=\frac14T$, $\frac12T$,
$\frac34T$ and $t=T$. This choice of $\eps$ was the largest (among the
  waves we computed) in which the solution of the quadratic $h$-model
remains regular for $0\le t\le T$. We see in panel (b) that the
quadratic model nearly forms a corner at $t=T$, which also leads to
slow decay of its Fourier modes in panel (c) as $t$ approaches
$T$. The Euler solution returns to a spatial phase shift (by $\pi$) of
its initial condition to 14 digits. Its Fourier modes decay to
$10^{-14}$ by $k=60$. We used 192 gridpoints in the computation.  We
also used 192 grid points to evolve the cubic model. The solution
remains well-resolved in Fourier space over this time (panel d), and
remains nearly indistinguishable from the Euler solution at the
resolution of panel (b) over $0\le t\le T$. We also note in panel (b)
that the solution of the quadratic model remains close to the Euler
solution until $t=3T/4$, but the linear model already deviates
substantially near $x=0$ and $x=2\pi$ at $t=T/4$. It remains accurate
in the trough at least until $t=T/4$, but is completely wrong
throughout the domain by $t=T/2$.

At this large amplitude ($\eps=0.498$), the cubic model relies on
Fourier truncation to remain well-posed. In panel (e) of
Figure~\ref{fig:ex3}, we increase the number of gridpoints from 192 to
1024 with the same initial data as in panel (d), and find that
high-frequency modes begin to grow rapidly shortly after
$t=T/16$. This picture is independent of the number of timesteps taken
--- increasing the number of timesteps by a factor of 1000 led to a
similar picture (not shown), except that applying the filter 1000
times as often led to slight suppression of the mode amplitudes in the
range $300\le k\le 512$. Thus, around $t=T/16$, the solution of the
cubic model appears to evolve to a state where the PDE ceases to be
well-posed. By contrast, the solution of the quadratic model does not
show signs of instability regardless of the grid size until $t=T$ ---
the growth in mode amplitudes in panel (c) is due to formation of a
geometric singularity rather than ill-posedness.  In panel (f), we see
that the cubic model remains well-posed over the whole interval $0\le
t\le T$ for a smaller amplitude wave ($\eps=0.163$). Here again we
used 1024 gridpoints, even though 48 would have been sufficient to
fully resolve the solution spectrally. We observed similar behavior
in Examples 1 and 2, where large-amplitude waves were found to form
corners at their crests in the quadratic model, and caused the
solution to leave the realm of well-posedness for the cubic
model. (The sharpening feature in Figure~\ref{fig:ex2}d for the
  quadratic model forms a corner at larger amplitude).

\begin{figure}[p]
  \begin{center}
\includegraphics[width=.99\linewidth]{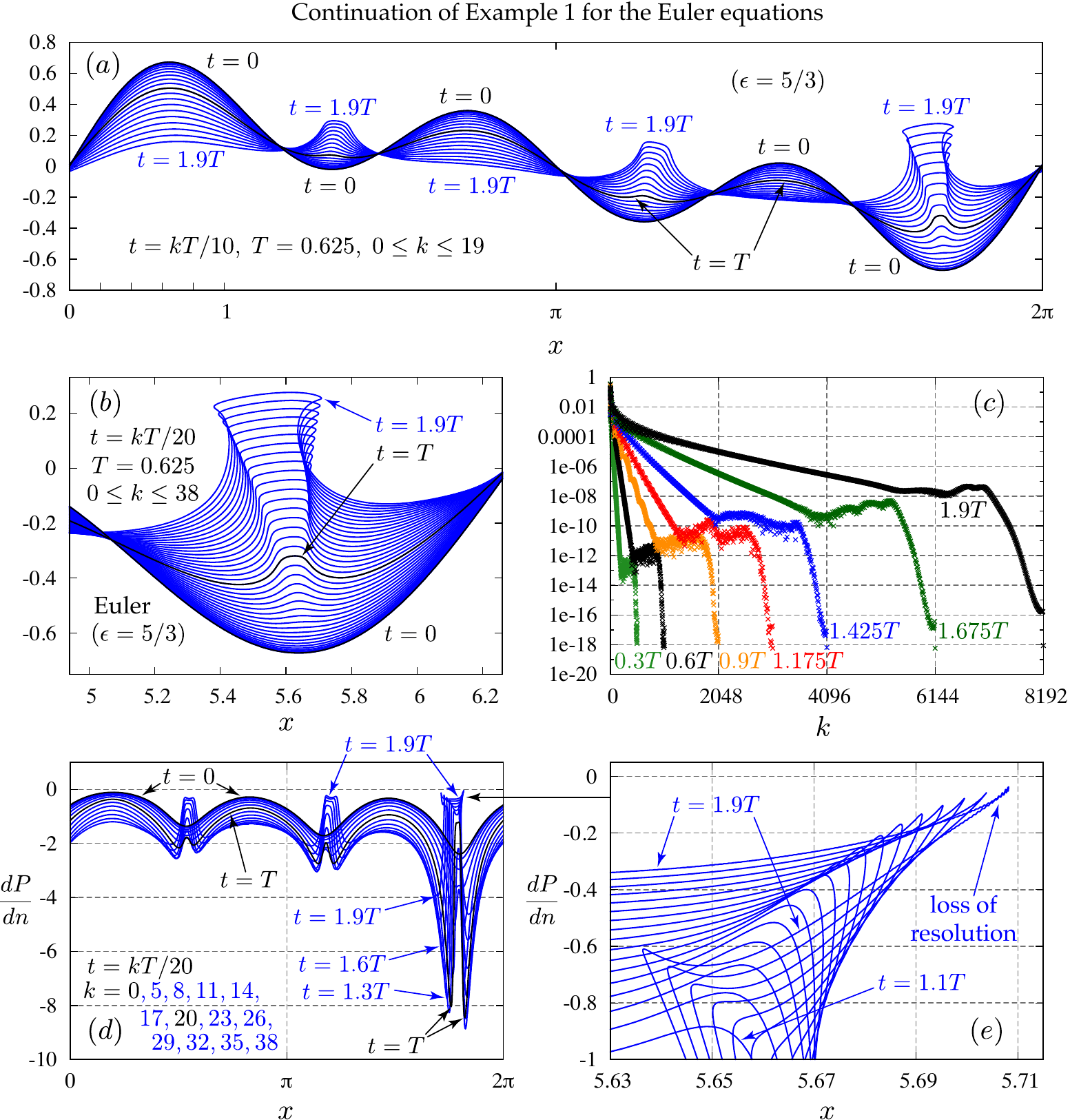}
\caption{\label{fig:ex1:overturn} ($a$) Evolution of the $\eps=5/3$
  wave with initial condition (\ref{eq:ex1}) to $t=1.9T$, where
  $T=0.625$ is the final time in Figure~\ref{fig:ex1}. The aspect
  ratio is 1:1. ($b$) A closer look at the jet forming in the lowest
  trough, which overturns on the left around $t=1.335T$ and on the
  right around $t=1.343T$. ($c$) Amplitude of the Fourier modes at the
  times shown. The spatial grid was refined 6 times in the course of
  the evolution. ($d,e$) The normal derivative of pressure versus $x$
  as time evolves.  The computed value of $dP/dn$ runs out of
  precision  but approaches zero near the tip of
  the overturning wave at $t=1.9T$, indicating corner formation.}
\end{center}
\end{figure}


In Figure~\ref{fig:ex1:overturn}, we return to the solution of the
full Euler equations for Example 1 with $\eps=5/3$. Here we switch to
an angle-arclength parametrization of the free surface
\cite{hls:94,hls:97,hls:01,ambrose-wilkening:vortex-sheet-2014}, which
allows for overturning waves. We continue to define $T=0.625$ and
evolve to $t=1.9T$. Panel ($a$) shows that the jets that were
beginning to form in the troughs at $t=T$ grow in height to become the
tallest points on the free surface at $t=1.9T$. The jet from the
lowest trough overtakes that of the middle trough around $t=1.4562T$,
and is on track to overtake that of the highest trough around
$t=1.9451T$, where we extrapolated from the last 4 timesteps.  Panel
($b$) shows a close-up of the jet from the lowest trough, which widens
and flattens out as it decelerates, causing the wave to overturn on
both sides of the jet. The overturn times are $t=1.335T$ on the left
and $1.343T$ on the right. Panel ($c$) shows the aplitude of the
Fourier modes as the solution evolves. The grid was refined 6 times,
from 1024 gridpoints at the beginning to 16384 at the end. Roundoff
errors become larger as the grid is refined due to increased
cancellation in the formula (\ref{eq:K:ab}) for $K(\alpha,\beta)$.
Each grid refinement also leads to some growth in high-frequency modes
that were being suppressed by the filter on the coarser mesh and
suddenly are not.  (We use the 36th order filter of Hou and Li
  \cite{hou:li:07}).

Once the wave overturns, there are 3 possible outcomes. It could
return to being single-valued, which sometimes happens after a vortex
sheet with surface tension overturns
\cite{ambrose-wilkening:vortex-sheet-2010,ambrose-wilkening:vortex-sheet-2014},
but seems unlikely here as there is no physical mechanism to slow down
the overturning wave.  It could self-intersect in a splash singularity
\cite{castro2012splash,coutand2014finite}. Or it could form a corner
at the tip of the overturning wave, similar to the way the quadratic
$h$-model tends to form singularities. This would coincide with
$dP/dn$ approaching zero at the corner, so that the Rayleigh-Taylor condition
$dP/dn<0$ ceases to hold.  Panels ($d,e$) of Figure~\ref{fig:ex3} show
$dP/dn$ plotted parametrically versus~$x$ at various times. We see
that indeed, $dP/dn$ appears to be increasing to 0 at the tip of each
overturning wave. However, computing $dP/dn$ involves taking a
derivative of the solution, and we were not able to maintain enough
digits of accuracy in double-precision to definitively say that
$dP/dn$ reaches zero. The Rayleigh-Taylor condition plays a key role in the local well-posedness
of the water wave problem in the absence of
surface tension (see the references given in the introduction). 
Further
investigation will be pursued in future work, where we will provide
details of the method for tracking overturning waves and computing
$dP/dn$. Our main point in this example is to show that the type of
breakdown we observe in the quadratic $h$-model, where the
solution forms a geometric singularity, may occur in the Euler equations as well.

\section*{Acknowledgements}
JW was supported by  NSF
 DMS-1716560 and by the Department of Energy, Office of
Science, Applied Scientific Computing Research, under award number
DE-AC02-05CH11231. RGB was partially funded by University of Cantabria and the Departement of Mathematics, Statistics and Computation.
SS was supported by NSF DMS-1301380,  the Department of Energy, Advanced Simulation and Computing (ASC) Program, and
by  DTRA HDTRA11810022.

\appendix
\section{Basic commutator identities}\label{appendix1}
In deriving the cubic $h$-model, we make use of the following identities:
\begin{align*}
\Lambda\big[(H \p_{\!t} h) \Lambda \big(\comm{h}{H} \p_{\!t} h\big)\big] - \Lambda \big[(H \p_{\!t} h) (\p_{\!1} h) (\p_{\!t} h)\big]
&=\Lambda\big[(H \p_{\!t} h) \Lambda \big(h H \p_{\!t} h\big)\big] + \Lambda \big[(H \p_{\!t} h)  h (\p_{\!1}\p_{\!t} h)\big] \,, \\
- \p_{\!1} \big(\comm{\p_{\!t} h}{H} \p_{\!1} \big(\comm{h}{H} \p_{\!t} h \big)\big)- \p_{\!1} \big(\comm{\p_{\!t} h}{H} \Lambda (h \p_{\!t} h)\big)
&=- \p_{\!1} \big(\comm{\p_{\!t} h}{H} \p_{\!1} \big(h H \p_{\!t} h \big)\big)\,, \\
\p_{\!1} \big(\comm{h}{H} \p_{\!1} (\p_{\!t} h H \p_{\!t} h) \big)  - \p_{\!1} \big[\comm{h}{H} \Lambda (\p_{\!t} h)^2 \big]
&=\p_{\!1} \big(\comm{h}{H} \p_{\!1} (\comm{\p_{\!t} h}{H} \p_{\!t} h) \big)\,, \\
-H \big[(\p_{\!1} h) h \partial_1^2 h\big]+\dfrac{1}{2} \p_{\!1} \big[h^2 (\Lambda \p_{1} h)\big]  - \dfrac{1}{2} H (h^2 \partial_1^3h)&=\frac{1}{2}\partial_1\comm{h^2}{H}\partial_1^2h,
\end{align*}
and
\begin{align*}
& H \big[(\p_{\!1} \p_{\!t} h) \p_{\!1}(h H \p_{\!t} h)\big] + H \big[(\p_{\!1} h) \p_{\!1}\big((\p_{\!t} h) (H \p_{\!t} h)\big)\big] - 2 H \big[(\p_{\!1}h) (\p_{\!1} \p_{\!t} h) (H \p_{\!t} h)\big] \\
&\quad + H \big[h (\p_{\!t} h) \p_{\!1} \Lambda \p_{\!t} h\big] \\
&\qquad = H \big[(\p_{\!1} \p_{\!t} h) (\p_{\!1} h) (H \p_{\!t} h)\big] + H \big[h (\p_{\!1} \p_{\!t} h) (\Lambda \p_{\!t} h)\big] + H \big[(\p_{\!1} h) (\p_{\!1} \p_{\!t} h) (H \p_{\!t} h)\big] \\
&\qquad\quad + H \big[(\p_{\!1} h) (\p_{\!t} h) (\Lambda \p_{\!t} h)\big] - 2 H \big[(\p_{\!1}h) (\p_{\!1} \p_{\!t} h) (H \p_{\!t} h)\big] + H \big[h (\p_{\!t} h) \p_{\!1} \Lambda \p_{\!t} h\big] \\
&\qquad = H \big[\p_{\!1} \big((h \p_{\!t} h) (\Lambda \p_{\!t} h) \big)\big] \\
&\qquad = \Lambda \big[(h \p_{\!t} h) (\Lambda \p_{\!t}h)\big]\,.
\end{align*}


\bibliographystyle{plain}

\end{document}